\documentclass[a4paper,reqno]{amsart}

\usepackage[utf8]{inputenc}
\usepackage[T1]{fontenc}
\usepackage{lmodern}
\usepackage[english]{babel}
\usepackage{microtype,hyperref}

\usepackage{amsmath,amssymb,amsfonts,amsthm}
\usepackage{mathtools,accents, upgreek,xcolor}
\usepackage{mathrsfs,comment,aliascnt,braket,bm,esint}
\usepackage{graphicx} 

\usepackage{etoolbox,comment}
\makeatletter
\patchcmd{\@maketitle}
  {\ifx\@empty\@dedicatory}
  {\ifx\@empty\@date \else {\vskip3ex \centering\footnotesize\@date\par\vskip1ex}\fi
   \ifx\@empty\@dedicatory}
  {}{}
\patchcmd{\@adminfootnotes}
  {\ifx\@empty\@date\else \@footnotetext{\@setdate}\fi}
  {}{}{}
\makeatother

\setcounter{tocdepth}{1}

\newcommand{\bC}{\mathbb{C}}

\newcommand{\bR}{\mathbb{R}}

\newcommand{\cC}{\mathcal{C}}

\newcommand{\dist}{\mathsf{d}}

\newcommand{\vol}{\mathrm{vol}}
\newcommand{\eps}{\ensuremath{\varepsilon}}

\theoremstyle{theorem}
\newtheorem{theorem}{Theorem}

\theoremstyle{definition}
\newtheorem{bump}{Bump}[section]

\newtheorem{lemma}[bump]{Lemma}
\newtheorem{definition}[bump]{Definition}
\newtheorem{corollary}[bump]{Corollary}
\newtheorem{prop}[bump]{Proposition}
\newtheorem{rem}[bump]{Remark}

\theoremstyle{remark}
\newtheorem{claim}{Claim}

\newcommand{\df}{\coloneqq}
\newcommand{\fd}{\eqqcolon}
\newcommand{\di}{\mathop{}\!\mathrm{d}}

\DeclareMathOperator{\sinc}{sinc}
\DeclareMathOperator{\Lip}{Lip}

\mathtoolsset{showonlyrefs}

\title[Spectral properties of symmetrized AMV operators]{Spectral properties \\ of symmetrized AMV operators}

\author{Manuel Dias}
\address{M.~Dias: Department of Mathematics and Data Science, Vrije Universiteit Brussel, Pleinlaan 2, B-1050 Elsene, Belgium.}
\email{manuel.dias@vub.be}

\author{David Tewodrose}
\address{D.~Tewodrose: Department of Mathematics and Data Science, Vrije Universiteit Brussel, Pleinlaan 2, B-1050 Elsene, Belgium.}
\email{david.tewodrose@vub.be}

\date{\today}

\begin{document}

\maketitle

 \begin{abstract}
The symmetrized Asymptotic Mean Value Laplacian $\tilde{\Delta}$, obtained as limit of approximating operators $\tilde{\Delta}_r$, is an extension of the classical Euclidean Laplace operator to the realm of metric measure spaces. We show that, as $r \downarrow 0$, the operators $\tilde{\Delta}_r$ eventually admit isolated eigenvalues defined via min-max procedure on any compact  uniformly locally doubling metric measure space. Then we prove $L^2$ and spectral convergence of $\tilde{\Delta}_r$ to the Laplace--Beltrami operator of a compact Riemannian manifold, imposing Neumann conditions when the manifold has a non-empty boundary.
\end{abstract}

\tableofcontents

\section{Introduction}

In the past thirty years, much research has been carried out to extend the classical Euclidean Laplace operator to metric measure spaces: see e.g.~\cite{CC,KMS,Gigli,AB}. This paper deals with such an extension, namely the symmetrized Asymptotic Mean Value (AMV) Laplacian, proposed in \cite{MT2}, see also \cite{K,MT1,AKS1,AKS2}. The symmetrized AMV Laplacian is set as
\begin{equation}\label{eq:conv}
\tilde{\Delta} \df \lim\limits_{r \downarrow 0} \tilde{\Delta}_r
\end{equation}
where for $\mu$-a.e.~$x \in X$,
\[
\tilde{\Delta}_r f(x) \df \frac{1}{2r^2} \fint_{B_r(x)} \left( 1 + \frac{V(x,r)}{V(y,r)} \right) (f(y)-f(x)) \di \mu(y).
\]
Here $f$ is a locally integrable function defined on a metric measure space $(X,\dist,\mu)$. Throughout the paper, $B_r(z)$ denotes the metric open ball centered at $z \in X$ with radius $r>0$, the notation $V(z,r)$ stands for $\mu(B_r(z))$, and $\fint_{B_r(z)}$ is shorthand for $V(z,r)^{-1} \int_{B_r(z)}$.

Part of the study on the symmetrized AMV Laplacian consists in finding a relevant meaning to the limit in \eqref{eq:conv}. If this is intended in the $L^2$ sense, then the associated spectral convergence can be investigated. This is the goal of the present paper. For any $k \in \mathbb{N}$, set
\begin{equation}
\tilde{\lambda}_{k,r} \df  \inf_{V \in \mathcal{G}_{k+1}(L^2(X,\mu))}
    \sup_{f \in V}
    \frac{\tilde{E}_r(f)}
    {\|f\|_2},
\end{equation}
where $\mathcal{G}_{k+1}(L^2(X,\mu))$ is the $(k+1)$-th Grassmannian of $L^2(X,\mu)$, and $\tilde{E}_r(f)$ is the energy functional naturally associated with $\tilde{\Delta}_r$ (Definition \ref{def:min-max}). These form a non-decreasing sequence of non-negative numbers. Our first main result states that these numbers eventually correspond to isolated eigenvalues of $-\tilde{\Delta}_r$ when $(X,\dist,\mu)$ is compact and  uniformly locally doubling (Definition \ref{def:loc_doubling}).

\begin{theorem}\label{th:1}
    Let $(X,\dist,\mu)$ be a compact uniformly locally doubling metric measure space. For any integer $k \ge 2$, there exists $r_k>0$ such that for any $r \in (0,r_k)$, the operator $-\tilde{\Delta}_r$ admits $k+1$ eigenvalues $$ 0 = \lambda_0(-\tilde{\Delta}_r) < \lambda_1(-\tilde{\Delta}_r) \le  \ldots \le \lambda_k(-\tilde{\Delta}_r)$$ such that $\lambda_i(-\tilde{\Delta}_r)=\tilde{\lambda}_{i,r}$ for any $i \in \{0,\ldots,k\}$.
\end{theorem}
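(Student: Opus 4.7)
The plan is to realize $-\tilde{\Delta}_r$ as a bounded self-adjoint operator on $L^2(X,\mu)$ equal to a bounded multiplication minus a Hilbert--Schmidt integral operator, apply Weyl's theorem to show its essential spectrum lies at height $\sim 1/r^2$ while its first $k+1$ min-max values remain bounded as $r\downarrow 0$, and then invoke the Courant--Fischer min-max principle below the essential spectrum. Since $\mathbf{1}_{B_r(x)}(y)=\mathbf{1}_{B_r(y)}(x)$, I rewrite
\[
-\tilde{\Delta}_r f(x)= m_r(x)f(x) - \int_X k_r(x,y)f(y)\,\di\mu(y),\qquad k_r(x,y) \df \frac{\mathbf{1}_{B_r(x)}(y)}{2r^2}\!\left(\frac{1}{V(x,r)}+\frac{1}{V(y,r)}\right),
\]
with $m_r(x)=\int_X k_r(x,y)\,\di\mu(y)$. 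The kernel $k_r$ is \emph{symmetric} in $(x,y)$, whence $\tilde{E}_r(f)=\tfrac{1}{2}\iint k_r(x,y)(f(x)-f(y))^2\,\di\mu(x)\,\di\mu(y)\ge 0$. Compactness of $X$ together with uniform local doubling bounds $V(\cdot,r)$ below by a positive constant, so $k_r\in L^2(X\times X)$ and the integral operator $T_r$ with kernel $k_r$ is Hilbert--Schmidt. The multiplier $m_r$ is bounded (doubling gives $V(y,r)\ge V(x,r)/C_D$ for $y\in B_r(x)$), so $-\tilde{\Delta}_r = M_{m_r}-T_r$ is bounded self-adjoint and Weyl's theorem yields $\sigma_{\mathrm{ess}}(-\tilde{\Delta}_r) = \sigma_{\mathrm{ess}}(M_{m_r}) = \text{ess range}(m_r)$.

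Two quantitative estimates complete the picture. First, the doubling bound $V(y,r)\le C_D V(x,r)$ for $y\in B_r(x)$ yields $m_r(x)\ge (2r^2)^{-1}(1+C_D^{-1})$, hence $\inf\sigma_{\mathrm{ess}}(-\tilde{\Delta}_r)\ge c_0/r^2$ with $c_0>0$ depending only on $C_D$. Second, a direct Fubini computation gives $\iint k_r\,\di\mu\,\di\mu = \mu(X)/r^2$, so for any Lipschitz $\phi$,
\[
\tilde{E}_r(\phi) \le \tfrac{1}{2}\Lip(\phi)^2 r^2 \iint k_r\,\di\mu\,\di\mu = \tfrac{\Lip(\phi)^2 \mu(X)}{2},
\]
uniformly in $r$. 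Exploiting compactness of $X$, I fix once and for all $k+1$ linearly independent Lipschitz bump functions $\phi_0,\dots,\phi_k\in L^2(X,\mu)$ supported on a finite cover at a scale depending only on $k$ and $X$ (not on $r$); the Rayleigh quotient $\tilde{E}_r(\cdot)/\|\cdot\|_2^2$ is then bounded by some $\Lambda_k<\infty$ on their span, uniformly in $r$, giving $\tilde{\lambda}_{k,r}\le \Lambda_k$.

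Choosing $r_k>0$ so that $c_0/r_k^2 > \Lambda_k$, for every $r\in(0,r_k)$ all of $\tilde{\lambda}_{0,r},\dots,\tilde{\lambda}_{k,r}$ lie strictly below $\inf\sigma_{\mathrm{ess}}(-\tilde{\Delta}_r)$. The Courant--Fischer min-max principle applied to the bounded self-adjoint operator $-\tilde{\Delta}_r$ below its essential spectrum then identifies each $\tilde{\lambda}_{i,r}$ with the $i$-th eigenvalue of $-\tilde{\Delta}_r$ (counted with multiplicity) for $i\le k$. The strict inequality $\tilde{\lambda}_{0,r}<\tilde{\lambda}_{1,r}$ comes from the fact that $\{\tilde{E}_r=0\}$ consists of functions constant on every pair with $\dist(x,y)<r$, which for $r$ small reduces to constants on any compact connected $X$. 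The main obstacle lies in the uniform upper bound on $\tilde{\lambda}_{k,r}$: the test bumps must have both Lipschitz constants and $L^2$-norms controlled \emph{independently of $r$}, which is achieved by choosing their support scale in terms of $k$ and the covering geometry of $X$ rather than $r$.
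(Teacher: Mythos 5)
Your proof is correct and follows the same four-step strategy as the paper: (i) push $\inf\sigma_{\mathrm{ess}}(-\tilde{\Delta}_r)$ to height $\sim r^{-2}$, (ii) bound the min-max values $\tilde\lambda_{0,r},\dots,\tilde\lambda_{k,r}$ uniformly in $r$ via Lipschitz bumps, (iii) invoke the min-max principle below the essential spectrum, (iv) identify $\ker\tilde\Delta_r$ with the constants to get $0=\lambda_0<\lambda_1$. The genuine differences are in steps (i) and (ii). For (i), the paper does not call on Weyl's theorem: its Proposition~\ref{prop:eigenvalueProperties} reproves, from the compactness of $\tilde A_r$ alone, that every spectral point below $\inf_y[\tilde A_r 1](y)/r^2$ is an isolated eigenvalue of finite multiplicity lying outside $\sigma_{\mathrm{ess}}$, and Corollary~\ref{cor:min} then uses the trivial bound $\tilde A_r 1\ge 1/2$. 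Your decomposition $-\tilde\Delta_r=M_{m_r}-T_r$ with $T_r$ Hilbert--Schmidt and a direct appeal to Weyl's theorem is a cleaner route to the same bound; note only that what you actually need is $\sigma_{\mathrm{ess}}(M_{m_r})\subset\sigma(M_{m_r})=\operatorname{ess\,ran}(m_r)\subset[c_0/r^2,\infty)$, which holds unconditionally, whereas the equality $\sigma_{\mathrm{ess}}(M_{m_r})=\operatorname{ess\,ran}(m_r)$ can fail if $\mu$ has atoms — so phrase it as an inclusion. For (ii), the paper chooses tent functions with pairwise disjoint supports so that the cross energies $\tilde E_r(f_i,f_j)$ vanish identically once $r$ is small; your bound via the Fubini identity $\iint k_r\,\di\mu\,\di\mu=\mu(X)/r^2$ avoids that bookkeeping but you should make explicit how you pass from a bound on each $\tilde E_r(\phi_i)$ to a uniform Rayleigh-quotient bound on the whole span (orthonormalize and use $\Lip(\sum a_i\phi_i)\le\sqrt{k+1}\,\|a\|_2\max_i\Lip(\phi_i)$). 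Finally, in step (iv) both your chaining argument and the paper's Proposition~\ref{prop:kernelOfSymAMV} require $X$ to be connected, a hypothesis that the theorem statement itself omits; you correctly flagged connectedness as needed, and the paper's own proof shares this reliance.
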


Our second main result deals with a smooth manifold $M$ endowed with a smooth Riemannian metric $g$. We write $\Delta_g$ for the (negative) Laplace--Beltrami operator of $(M,g)$. We let $m \ge 2$ be the dimension of $M$, and we set
\begin{equation}\label{eq:C_m}
C_m \df \frac{1}{2}\fint_{\mathbb{B}^m_1(0)}
            \xi_1^2
        \di \xi
    =
        \frac{1}{2(m+2)} 
\end{equation}
where $\mathbb{B}^m_1(0)$ is the unit Euclidean ball of $\mathbb{R}^m$. In this context, it follows from the equality between symmetrized and non-symmetrized AMV Laplacian and a simple calculation in normal coordinates that
\begin{equation}\label{eq:pointwise}
\tilde{\Delta}_r f (x) \stackrel{r\downarrow 0}{\to} C_m \Delta_g f(x)
\end{equation}
for any $f \in \cC^2(M)$ and any interior point $x \in M$, see \cite{MT1,MT2} --- the convergence is even locally uniform in the interior of $M$, see \cite{AKS1}. We refer to \cite{MT1,MT2,AKS1,AKS2} for related pointwise results in various settings like Carnot groups or Alexandrov spaces.

In this paper, we are interested in the $L^2$ version of \eqref{eq:pointwise} with a particular interest in the case where $M$ admits a non-empty boundary $\partial M \neq 0$. In this case, we write $\partial_\nu f \in \cC^\infty(\partial M)$ for the normal derivative of a smooth function $f : M \to \mathbb{R}$, and we define
\begin{equation}\label{eq:Cnu}
\cC^\infty_{\nu}(M) \df \{f \in \cC^\infty(M) \, : \, \partial_\nu f =0 \}.
\end{equation}
We see $(M,g)$ as a metric measure space $(M,\dist_g,\vol_g)$ where $\dist_g$ and $\vol_g$ are the Riemannian distance and volume measure on $M$ associated with $g$. Then our statement reads as follows.

\begin{theorem}\label{th:2}
Let $(M^m,g)$ be a compact, connected, smooth Riemannian manifold with a non-empty (resp.~empty) boundary $\partial M$. Then for any $f \in \cC_\nu^\infty(M)$ (resp.~$\cC^\infty(M)$),  as $r \downarrow 0$,
        \[
        \tilde{\Delta}_r f \stackrel{L^2}{\longrightarrow} C_m \Delta_g f.
        \]
\end{theorem}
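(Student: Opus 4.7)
The plan is to split $M$ into a bulk part, where the locally uniform pointwise convergence \eqref{eq:pointwise} upgrades to $L^2$ convergence essentially for free, and a thin strip near $\partial M$, whose vanishing volume is traded against a uniform $L^\infty$ bound on $\tilde\Delta_r f$ produced by the Neumann hypothesis. When $\partial M = \emptyset$, the statement is immediate: $M$ is compact without boundary, so the locally uniform convergence from \cite{AKS1} is in fact global, whence $L^2$ convergence.

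Suppose then $\partial M \neq \emptyset$. For small $\delta > 0$, set $M_\delta \df \{x \in M : \dist_g(x, \partial M) > \delta\}$ and $T_\delta \df M \setminus M_\delta$, so that $\vol_g(T_\delta) \to 0$ as $\delta \downarrow 0$. For $r < \delta/2$, every ball $B_r(x)$ with $x \in M_\delta$ lies in the interior of $M$, so the locally uniform convergence gives $\sup_{M_\delta} |\tilde\Delta_r f - C_m \Delta_g f| \to 0$ as $r \downarrow 0$.

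The core of the argument is then the uniform $L^\infty$ bound $\|\tilde\Delta_r f\|_{L^\infty(M)} \le K$ for some constant $K$ independent of small $r$. I would work in Fermi coordinates $(x', x_m)$ on a fixed tubular neighborhood of $\partial M$ and, for $x$ with $0 \le x_m \le r$, Taylor-expand $f(y) - f(x)$, the weight $1 + V(x,r)/V(y,r)$ (which stays bounded and is a smooth function of the rescaled normal heights $x_m/r$, $y_m/r$, plus curvature corrections), and the volume density. After the change of variables $y = x + rz$, the only term at risk of diverging as $r \downarrow 0$ is the first-order one,
\[
\frac{\partial_\nu f(x)}{r} \cdot \bigl[\text{bounded function of } x_m/r\bigr],
\]
tangential contributions cancelling by symmetry in $z'$. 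The Neumann hypothesis $\partial_\nu f|_{\partial M} = 0$ together with smoothness of $f$ yields $\partial_\nu f(x) = O(x_m) = O(r)$ on $T_\delta$, which kills the prefactor $1/r$; all remaining terms are of order $r^0$ or better and are controlled by standard estimates.

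Equipped with this bound, a standard $\eps/2$ argument closes the proof: for prescribed $\eta > 0$, first pick $\delta$ small so that $(K + C_m\|\Delta_g f\|_\infty)^2 \vol_g(T_\delta) < \eta/2$, then pick $r < \delta/2$ small so that $\sup_{M_\delta} |\tilde\Delta_r f - C_m \Delta_g f|^2 \cdot \vol_g(M) < \eta/2$. The main obstacle is the $L^\infty$ bound near the boundary: tracking, in Fermi coordinates, how the weight $V(x,r)/V(y,r)$, the volume density, and the first-order term in $f$ interact when $B_r(x)$ is truncated by $\partial M$, and verifying that the a priori $1/r$ divergence is exactly cancelled by the Neumann condition.
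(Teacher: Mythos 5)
Your proposal matches the paper's strategy: Proposition \ref{lem:LaplaceBeltramiLemma} supplies your locally uniform interior convergence, Proposition \ref{prop:L_infty_bound} is precisely your uniform $L^\infty$ bound obtained by Taylor expansion in Fermi coordinates plus the Neumann cancellation of the $\partial_\nu f/r$ term, and your explicit $\eps/2$ split over $M_\delta$ and $T_\delta$ is a rewriting of the dominated-convergence step the paper leaves implicit. One small imprecision to be aware of when carrying this out: the Neumann-cancellation case should cover $0 \le x_m \lesssim 2r$ (not just $x_m \le r$), while for $x_m > 2r$ one instead argues that the ball $B_r(y)$ is untruncated for every $y \in B_r(x)$, so the first-order normal term vanishes outright by symmetry — this is exactly the case split in Step 4 of the paper's proof of Proposition \ref{prop:L_infty_bound}.
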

We point out that the boundaryless version of this result is rather easy to obtain, while a non-empty boundary is quite tricky to handle. The Neumann condition in the latter case is crucial to ensure convergence: indeed, the sequence $\tilde{\Delta}_r f$ may blow-up if this is not imposed.

After the previous $L^2$-convergence result, we address the question of spectral convergence, that is to say, the convergence of the associated eigenvalues and eigenfunctions. In this regard we show that, for any $k \in \mathbb{N}$, the function $r \mapsto \tilde{\lambda}_{k,r}$ is bounded in a neighborhood of $0$, as proved in the course of Theorem \ref{th:1}. This ensures that the $k$-th lowest eigenvalue of the operator $-\tilde{\Delta}_r$, which we denote $\lambda_k(-\tilde{\Delta}_r)$, exists for small enough $r$, and that it coincides with $\tilde{\lambda}_{k,r}$. Let $f_{k,r}$ be an $L^2$-normalized eigenfunction of $-\tilde{\Delta}_r$ associated with $\lambda_k(-\tilde{\Delta}_r)$. Recall that if $\partial M = \emptyset$ (resp.~$\partial M \neq \emptyset$), a Laplace (resp.~Neumann) eigenvalue of $(M,g)$ is a number $\mu \ge 0$ for which there exists an associated eigenfuction $f \in \cC^\infty(M)$ (resp.~$\cC_\nu^\infty(M)$) of $-\Delta_g$, i.e.~$-\mu f = \Delta_g f$.

\begin{theorem}\label{th:3}
    Let $(M^m,g)$ be a compact, connected, smooth Riemannian manifold. Assume that $\partial M = \emptyset$ (resp.~$\partial M \neq \emptyset$). For $k \in \mathbb{N}$, let $\mu_k$ be the $k$-th lowest Laplace (resp.~Neumann) eigenvalue of $\Delta_g$. For any $(r_n) \subset (0,+\infty)$ such that $r_n \to 0$, there exists an $L^2$-normalized Laplace (resp.~Neumann) eigenfunction $f \in \cC^\infty(M)$ (resp.~$\cC_\nu^\infty(M)$) associated with $\mu_k$  such that, up to a subsequence,
    \begin{equation}
    \begin{cases}
    \lambda_{k}(-\tilde{\Delta}_{r_n}) \to C_m \,  \mu_k ,\\
    f_{k,r_n} \stackrel{L^2}{\longrightarrow} f.
    \end{cases}
    \end{equation}
\end{theorem}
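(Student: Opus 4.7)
The plan is to combine the min-max characterisation provided by Theorem~\ref{th:1} with the $L^2$-convergence of operators on smooth functions from Theorem~\ref{th:2} in order to run the classical two-sided spectral convergence argument: an upper bound $\limsup_n \lambda_k(-\tilde{\Delta}_{r_n}) \le C_m \mu_k$ via smooth trial functions, and a matching lower bound together with convergence of eigenfunctions obtained by extracting an $L^2$-convergent subsequence and identifying the limit as an eigenfunction of $-\Delta_g$.

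For the upper bound, I would fix $L^2$-orthonormal smooth eigenfunctions $\phi_0,\ldots,\phi_k$ of $-\Delta_g$ associated with $\mu_0 \le \ldots \le \mu_k$ (chosen inside $\cC^\infty_\nu(M)$ in the Neumann case), and use $V \df \mathrm{span}\{\phi_0,\ldots,\phi_k\}$ as trial $(k+1)$-plane in the min-max defining $\tilde{\lambda}_{k,r_n}$. Writing $\tilde{E}_{r_n}(f) = \langle -\tilde{\Delta}_{r_n}f,f\rangle_{L^2}$ by self-adjointness and polarising, Theorem~\ref{th:2} applied to each $\phi_i$ together with the finite-dimensionality of $V$ gives the convergence of Rayleigh quotients on $V$ to their Riemannian counterparts, hence
\[
    \sup_{f\in V}\frac{\tilde{E}_{r_n}(f)}{\|f\|_2^2} \xrightarrow[n\to\infty]{} C_m \sup_{f\in V}\frac{\int_M |\nabla f|_g^2 \di \vol_g}{\|f\|_2^2} = C_m \mu_k,
\]
yielding the desired bound on $\tilde{\lambda}_{k,r_n}=\lambda_k(-\tilde{\Delta}_{r_n})$.

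For the lower bound and the convergence of eigenfunctions, I would proceed by induction on $k$. The uniform bound $\tilde{E}_{r_n}(f_{k,r_n}) = \lambda_k(-\tilde{\Delta}_{r_n}) \le C_m \mu_k + o(1)$ on the unit-norm sequence $(f_{k,r_n})$ should yield $L^2$-precompactness via a Rellich-type theorem for the nonlocal AMV energy at collapsing scale. Extracting an $L^2$-limit $f$, I would test the eigenvalue equation against $\phi\in \cC^\infty(M)$ (resp.~$\cC^\infty_\nu(M)$), use the self-adjointness of $\tilde{\Delta}_{r_n}$ to move the operator onto $\phi$, and apply Theorem~\ref{th:2} to pass to the limit, obtaining
\[
    -C_m \int_M f\, \Delta_g \phi \di \vol_g = \mu \int_M f\phi \di \vol_g,\qquad \mu\df \lim_n \lambda_k(-\tilde{\Delta}_{r_n}),
\]
so that $f$ is a weak eigenfunction of $-\Delta_g$ with eigenvalue $\mu/C_m$. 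Elliptic regularity, together with an integration-by-parts argument in the Neumann case (using that traces of $\cC^\infty_\nu(M)$-functions on $\partial M$ are dense in $L^2(\partial M)$), then promotes $f$ to a genuine (Neumann) eigenfunction. The induction hypothesis grants $L^2$-convergence of $f_{j,r_n}$ to eigenfunctions for $\mu_j$, $j<k$, so passing to the limit in the orthogonality relations $\langle f_{k,r_n},f_{j,r_n}\rangle=0$ forces $f$ to be orthogonal to the first $k$ eigenspaces; combined with $\mu/C_m \le \mu_k$, this pins down $\mu/C_m = \mu_k$ and identifies $f$ as the claimed eigenfunction.

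The main obstacle is the $L^2$-precompactness of unit-norm sequences with bounded AMV energy at a collapsing scale. This is a Bourgain--Brezis--Mironescu / Ponce-type statement, but the underlying space is a Riemannian manifold with possibly non-empty boundary and the scale $r_n$ varies simultaneously with the functions. I expect the argument to proceed by localisation in normal charts, comparison with a Gagliardo-type energy, and a diagonal extraction; the Neumann case additionally requires care near $\partial M$ to preclude boundary concentration, which is precisely why Theorem~\ref{th:2} is only applied on $\cC^\infty_\nu(M)$.
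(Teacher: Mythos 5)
Your overall architecture — upper bound via a smooth eigenfunction trial subspace, lower bound by extracting an $L^2$-convergent subsequence, identification of the limit by testing against $\cC^\infty_\nu(M)$-functions and moving $\tilde{\Delta}_{r_n}$ onto the test function via self-adjointness and Theorem~\ref{th:2}, and pinning down the eigenvalue through orthogonality to the lower-index limit eigenfunctions — matches the paper's proof (Proposition~\ref{weakConvergenceLemma} together with the two steps in the proof of Theorem~\ref{th:3}). One small point where the routes diverge: to promote the weak eigenfunction to a genuine Neumann one, you propose ``elliptic regularity'' plus a trace-density integration by parts, which is a little circular because up-to-the-boundary regularity normally presupposes the boundary condition; the paper instead solves the Neumann problem with right-hand side $\tfrac{\lambda}{C_m}f$ and uses the injectivity of the operator $T$ from Lemma~\ref{injectivityOfIntegralOfLaplacian} to identify the solution with $f$, which avoids this.

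The genuine gap is exactly where you flag it: the $L^2$-precompactness of the unit-norm eigenfunctions under a uniform bound on the AMV energy at the collapsing scale $r_n$. Declaring this a BBM/Ponce-type statement and stating that you ``expect'' it to follow from a Gagliardo-energy comparison and diagonal extraction is not a proof, and it is precisely where essentially all of the new technical work lives. The paper does \emph{not} prove a free-standing compactness principle; instead it runs a contradiction argument in which the weak limit $f$, already identified as a smooth eigenfunction by Proposition~\ref{weakConvergenceLemma}, plays an essential role. Assuming $\|f_{k,r_n}-f\|_2^2\ge\alpha$, one covers $M$ by finitely many bi-Lipschitz boundary charts, finds a chart capturing a fixed fraction of the defect, transfers the AMV energy to the half-cube via the bi-Lipschitz comparison of Lemma~\ref{ineqBetweenEnergies}, partitions the half-cube into $L_N$ subcubes of side $1/N$, and lower-bounds the energy by $\tfrac{1}{2^m}\,\lambda_1(-\tilde{\Delta}_{\bar r_n,\mathfrak{Q}^m(1/N)})\sum_i\|h_n-\epsilon_{i,n,N}\|^2_{L^2(\mathcal{Q}_i)}$, where $h_n=(f_{k,r_n}-f)\circ\Phi$ and $\epsilon_{i,n,N}=\int_{\mathcal{Q}_i}h_n\di\mathcal{L}^m$. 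The means $\epsilon_{i,n,N}$ vanish by weak convergence $h_n\rightharpoonup 0$ — so knowing the limit $f$ is essential — and the quantitative input is Lemma~\ref{lem:firstEigenvalueOfCubes}, $\liminf_{b\to0}\lim_{r\to0}\lambda_1(-\tilde{\Delta}_{r,\mathfrak{Q}^m(b)})=+\infty$, itself obtained by reflecting the cube to the torus $\mathbb{T}^m$ and carrying out a Fourier computation (Lemma~\ref{SinLemma}, Proposition~\ref{prop:torus}). Choosing $N$ large enough then contradicts your uniform energy bound. Without either this argument or an actual BBM-type compactness theorem on a manifold with boundary at variable collapsing scale, the lower-bound half of your proof remains unproved.
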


We point out that the question of spectral convergence for the Gaussian approximation of the Laplace--Beltrami operator of a compact Euclidean submanifold with boundary  was raised in \cite{Belkin+}. This has been one motivation for the present work: to study this convergence with the intrinsic approximation provided by the symmetrized AMV operators $\tilde{\Delta}_r$ instead of the extrinsic Gaussian one.

\subsubsection*{Acknowledgments.} Both authors are funded by the Research Foundation – Flanders (FWO) via the Odysseus II programme no.~G0DBZ23N. They thank the anonymous reviewers for their numerous and constructive suggestions, and Dmitri Pavlov for clarifying a point related to Lemma 2.1.

\section{Averaging-like operators}

In this section, we consider a fixed metric measure space, that is to say, a triple $(X,\dist,\mu)$ where $(X,\dist)$ is a metric space and $\mu$ is a fully supported  regular Borel measure on $(X,\dist)$ such that \[V(x,r) \df \mu(B_r(x))<+\infty\] for any $x \in X$ and $r>0$, where $B_r(x)$ denotes the open ball $\{y \in X : \dist(x,y)<r\}$. Notice that for any $x \in X$ and $r>0$, $$V(x,r)>0,$$ because $\mu$ is fully supported. Moreover, if $X$ is compact, then
\begin{equation}
\mu(X)<+\infty
\end{equation}
since $\mu$ is finite on any ball of radius the diameter of $X$. We set
\begin{equation*}\label{eq:mM}
0 \le m(r) \df \inf_{x \in X} V(x,r)  \le M(r) \df \sup_{x \in X} V(x,r) \le +\infty.
\end{equation*}

Note that our assumptions yield the following preliminary result.

\begin{lemma}\label{lem:separable}
    $L^2(X,\mu)$ is separable.
\end{lemma}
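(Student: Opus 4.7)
The plan is to deduce separability of $L^2(X,\mu)$ from two intermediate facts: that $\mu$ is $\sigma$-finite, and that $X$ itself is separable as a metric space.

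For $\sigma$-finiteness, I fix a basepoint $x_0\in X$ and use that $\dist(x,x_0)<+\infty$ for every $x\in X$ to write $X=\bigcup_{n\in\bN}B_n(x_0)$; each ball $B_n(x_0)$ has finite measure by the standing hypothesis. For separability of $X$, I argue by contradiction: if $X$ were not separable, there would exist $\eps>0$ together with an uncountable $\eps$-separated subset $\{x_\alpha\}_{\alpha\in A}\subset X$. A pigeonhole argument over the exhaustion shows that uncountably many $x_\alpha$ lie in a single $B_N(x_0)$. The open balls $B_{\eps/2}(x_\alpha)$ associated with these indices are pairwise disjoint, all contained in the finite-measure set $B_{N+\eps/2}(x_0)$, and each has strictly positive $\mu$-measure because $\mu$ is fully supported. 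Stratifying $A$ by the condition $\mu(B_{\eps/2}(x_\alpha))\geq 1/k$ produces, for some $k\in\bN$, an uncountable subfamily whose balls all have measure at least $1/k$, which is incompatible with $\mu(B_{N+\eps/2}(x_0))<+\infty$.

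Once $X$ is known to be separable, picking a countable dense sequence $\{x_i\}\subset X$ yields the countable family of balls $\{B_q(x_i):i\in\bN,\,q\in\bQ_{>0}\}$, a topological basis generating the Borel $\sigma$-algebra. Intersecting with the $\sigma$-finite exhaustion produces a countable generating family of finite-measure Borel sets, and the standard measure-theoretic approximation argument then shows that finite rational linear combinations of the corresponding indicator functions form a countable dense subset of $L^2(X,\mu)$.

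The main subtlety lies in the non-separability step, where one must simultaneously exploit $V(x,r)<+\infty$ (both to produce the $\sigma$-finite exhaustion and to secure a finite-measure ambient ball in which the contradiction occurs) and the full-support condition (to ensure each small ball $B_{\eps/2}(x_\alpha)$ has strictly positive measure). Neither hypothesis alone is sufficient, and this interplay is presumably the subtle point clarified by D.~Pavlov mentioned in the acknowledgments.
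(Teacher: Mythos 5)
Your proof is correct, and it takes a genuinely different route from the paper's. Both proofs ultimately reduce separability of $L^2(X,\mu)$ to second countability (equivalently, separability) of the metric space $X$ together with $\sigma$-finiteness of $\mu$, but they establish separability of $X$ in quite different ways. The paper fixes $o\in X$, considers for each $\eps,N$ the supremum of $\mu|_{B_N(o)}$ over countable unions of $\eps$-balls, shows the supremum is attained, and then uses \emph{inner regularity} of $\mu$ to prove the maximizer has full measure in $B_N(o)$: if not, one extracts a compact subset of positive measure from the complement, covers it by finitely many $\eps$-balls, and contradicts maximality. Only then does it conclude that $B_N(o)$ is covered by countably many $2\eps$-balls. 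Your contradiction argument via an uncountable $\eps$-separated family is shorter and avoids inner regularity altogether: it combines full support (each small ball has positive measure) and $V(x,r)<+\infty$ (to locate a finite-measure ambient ball) in a direct pigeonhole, rather than passing through a compactness/finite-subcover step. This is arguably a cleaner use of exactly the two standing hypotheses.

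One small point to tighten if you write this up: rational linear combinations of indicators of the \emph{basic balls} alone are not dense, since $\chi_{B_1\cup B_2}$ is not a linear combination of $\chi_{B_1}$ and $\chi_{B_2}$ without also having $\chi_{B_1\cap B_2}$, and intersections of balls are not balls. You need the countable \emph{algebra} generated by the basis (intersected with the exhaustion), or, as the paper does, indicators of finite unions of basis elements; exhausting an open set monotonically then handles the approximation. Once that is spelled out, the standard fact you invoke --- a $\sigma$-finite measure on a countably generated $\sigma$-algebra has separable $L^p$ for $p<\infty$ --- does complete the proof.
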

\begin{proof}
    We start by proving that $(X,\dist)$ is a second countable space. Fix $o \in X$. Given $\epsilon>0$ and $N \in \mathbb{N}$ positive, consider the value given by
    \begin{equation}
    \label{eq:supremumOfMeasureOfCovers}
        \alpha_{\epsilon, N}
    =
        \sup \{\left. \mu \right|_{B_N(o)}(\cup_n B_{\epsilon}(x_n)) : \{x_n\}_{n \in \mathbb{N}} \subset X\},
    \end{equation}
    where $\left. \mu \right|_{B_N(o)} (\cdot) \df \mu (\cdot \cap B_N(o))$. First we show this supremum is attained. Consider $\delta_k \rightarrow 0$ and let $\{x_n^k\}_{n \in \mathbb{N}} \subset X$ such that
    \begin{equation}
        \left. \mu \right|_{B_N(o)}( \cup_n B_\epsilon(x_n^k))
        >
        \alpha - \delta_k.
    \end{equation}
    Taking
    \begin{equation}
        \{y_n\}_{n \in \mathbb{N}}
    =
        \cup_k \{x_n^k\}_{n \in\mathbb{N}}
    \end{equation}
    we have that
    \begin{equation}
        \left. \mu \right|_{B_N(o)}(\cup_n B_\epsilon(y_n)) = \alpha_{\epsilon, N}.
    \end{equation}
    Now we prove $\alpha_{\epsilon, N} = \mu(B_N(z)) < \infty$. If $\alpha_{\epsilon, N} < \mu(B_N(z))$, then $\mu(B_{N}(z) \backslash \cup_n B_{\epsilon}(y_n)) > 0$, where $\{y_n\}_{n \in \mathbb{N}}$ is a maximizer of \eqref{eq:supremumOfMeasureOfCovers}. Since the measure is inner regular, there must exist some compact $K\subset B_{N}(z) \backslash \cup_n B_{\epsilon}(y_n)$ such that
    \begin{equation}
        \mu(K)>0.
    \end{equation}
    Since we can cover $K$ by a finite number of balls $B_\epsilon(z_k)$, there must exist some $z = z_k$ such that
    \begin{equation}
        \mu(B_\epsilon(z) \cap K) > 0.
    \end{equation}
    We then have
    \begin{align}
        \left. \mu \right|_{B_N(o)}\left(\cup_n B_\epsilon(y_n) \cup B_\epsilon(z)\right)
    &\geq
        \left. \mu \right|_{B_N(o)}\left(\cup_n B_\epsilon(y_n) \cup \left(B_\epsilon(z) \cap K\right)\right)\\
    &=
        \mu\left(\cup_n B_\epsilon(y_n)\right)
    +
        \mu\left( B_\epsilon(z) \cap K\right)
    >
    \alpha_{N,\epsilon}.
    \end{align}
    And so $\{y_n\}_{n \in \mathbb{N}} \cup \{z\}$ contradicts the maximality of $\{y_n\}_{n \in \mathbb{N}}$. This shows that $\alpha_{\epsilon, N} = \mu(B_N(o))$. 
    Since $\cup_n B_\epsilon(y_n) \cap B_N(o)$ has full measure in the support $B_N(o)$ of $\mu|_{B_N(o)}$, it is a dense subset of $B_N(o)$. This implies that $\cup_n B_{2\epsilon}(y_n)$ is a countable cover of $B_N(o)$. To build a countable basis of $X$, consider a sequence $\delta_k \rightarrow 0$. For any $k$, take $\cup_{n \in \mathbb{N}} B_{\delta_k} (y_n^{k,N})$ a countable cover of $B_N(z)$. Then the set given by
    \begin{equation}
        \mathcal{B}
    =
        \cup_{k,N \in \mathbb{N}}
        \{
            B_{\delta_k}(y_n^{k,N})\}_{n \in \mathbb{N}}
    \end{equation}
    is a countable basis of $X$. Given this basis $\mathcal{B}$ we can create a new basis given by the finite union of elements of $\mathcal{B}$, and we call this new basis $\mathcal{B}'$ which will also be countable. In particular we have that given some open set $V \subset X$ we can find a sequence $V_n$ such that
    \begin{equation}
        V_n \subset V_{n+1}, \quad\quad
        \cup_n V_n = V.
    \end{equation}
    This is the case since $\mathcal{B}$ is a countable basis, we can find elements $B_k \in \mathcal{B}$ such that
    \begin{equation}
        \cup_k B_k = V.
    \end{equation}
    We conclude by taking $V_n = \cup_{k=1}^n B_k \in \mathcal{B}'$. To construct our dense subset of $L^2(X,\mu)$ we take finite sums with rational coefficients of the characteristic functions $\chi_V$, with $V \in \mathcal{B}'$. To show that this is dense in $L^2(X,\mu)$ we only need to show that we can approximate arbitrarily well simple functions $\chi_U$ where $U \subset X$ is open and $\mu(U) < \infty$ since the measure $\mu$ is outer regular. Given such an open set $U$, take $\chi_{U_n}$ where $U_n \in \mathcal{B}'$ and $U_{n}\subset U_{n+1}$ and $\cup_n U_n = U$. Then we have by dominated convergence
    \begin{equation}
        \chi_{U_n}
    \rightarrow_{L^2(X,\mu)}
        \chi_U,
    \end{equation}
    concluding the proof.
\end{proof}

\subsection{Averaging operator}

For any $x,y \in X$ and $r>0$, set
\[
a_r(x,y) \df \frac{1_{B_r(x)}(y)}{V(x,r)}\, \cdot
\]
Consider $u \in L^1_{\text{loc}}(X,\mu)$. For any $x \in X$ and $r>0$ such that $u$ is $\mu$-integrable on $B_r(x)$, set
\[
A_ru(x) \df \fint_{B_r(x)} u \di \mu = \int_X a_r(x,y) u(y) \di \mu(y).
\]
Notice that, since $u$ is locally integrable, for any $x \in X$ there exists $r_x >0$ such that $A_{r_x}u(x)$ is well-defined. However, there may be no uniform $r>0$ for which the integral $A_ru(x)$ is well-defined for every $x \in X$.

Let us also set
\[
a^*_r(x,y) \df a_r(y,x) = \frac{1_{B_r(x)}(y)}{V(y,r)}
\]
for any $x,y \in X$ and $r>0$. Consider $u \in L^0(X,\mu)$ such that $v(\cdot) \df u(\cdot)/V(\cdot,r) \in L^1_{\text{loc}}(X,\mu)$. For any $x \in X$ and $r>0$ such that $v$ is $\mu$-integrable on $B_r(x)$, set
\[
A^*_ru(x) \df \int_{B_r(x)} \frac{u(y) \di \mu(y)}{V(y,r)} = \int_{X} a_r^*(x,y) u(y) \di \mu(y).
\]
Notice that, just like $A_ru(x)$, $A^*_ru(x)$ may not make sense uniformly with respect to $x \in X$. 

For any $r>0$,  we introduce the following conditions:
\begin{equation}\label{I}\tag{$\mathrm{I}_r$}
        \|A_r^*1\|_\infty < +\infty,
\end{equation}
\begin{equation}\label{eq:V-1_int}\tag{$\mathrm{II}_r$}
V(\cdot,r)^{-1} \in L^1(X,\mu).
\end{equation}
Note that \eqref{eq:V-1_int} implies \eqref{I} since
\[
 \|A_r^*1\|_\infty = \sup_{x \in X} |A_r^*1(x)|  = \sup_{x \in X} \int_{B_r(x)} \frac{\di \mu(y)}{V(y,r)} \le \int_X \frac{\di \mu(y)}{V(y,r)} \, \cdot
\]

In the next lemma, we discuss the boundedness and the compactness of the averaging operator $A_r$ acting on Lebesgue spaces.

\begin{lemma}\label{lem:average}
Assume that there exists $r>0$ such that \eqref{I} holds. Then for any $p \in [1,+\infty]$ the linear operator $A_r : L^p(X,\mu) \to L^p(X,\mu)$ is well-defined and bounded with
\[
\| A_r \|_{p \to p} \le \|A_r^*1\|_\infty^{1/p} \cdot
\]
Moreover, if \eqref{eq:V-1_int} holds, then $A_r : L^2(X,\mu) \to L^2(X,\mu)$ is compact.
\end{lemma}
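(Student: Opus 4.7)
The plan is to exploit the probabilistic nature of the kernel $a_r(x,\cdot)$ to obtain the $L^p$-bounds via Jensen's inequality, and then to prove compactness on $L^2$ by identifying $A_r$ as a Hilbert--Schmidt operator.

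For the boundedness part, I would first observe that for every $x \in X$, $\int_X a_r(x,y)\,\di \mu(y) = V(x,r)/V(x,r) = 1$, so $a_r(x,\cdot)\,\di \mu$ is a probability measure on $X$. The case $p = +\infty$ is then immediate: $|A_r u(x)| \le \|u\|_\infty$ pointwise, giving $\|A_r\|_{\infty \to \infty} \le 1 = \|A_r^*1\|_\infty^{1/\infty}$ under the convention $\alpha^0 = 1$. For $p \in [1, +\infty)$, Jensen's inequality applied to the convex function $t \mapsto |t|^p$ against the above probability measure yields the pointwise estimate
\[
|A_r u(x)|^p \le \int_X a_r(x,y) |u(y)|^p \di \mu(y).
\]
Integrating in $x$ and swapping the order of integration via Tonelli's theorem (valid by non-negativity of the integrand), the inner integral reorganizes to $\int_X a_r(x,y) \di \mu(x) = A_r^* 1(y) \le \|A_r^*1\|_\infty$, and one obtains $\|A_r u\|_p^p \le \|A_r^*1\|_\infty \|u\|_p^p$, which gives the stated operator-norm bound after taking $p$-th roots.

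For compactness on $L^2(X,\mu)$ under the stronger hypothesis \eqref{eq:V-1_int}, the plan is to verify that the kernel $a_r$ belongs to $L^2(X \times X, \mu \otimes \mu)$, which makes $A_r$ a Hilbert--Schmidt operator on the separable Hilbert space $L^2(X,\mu)$ (separability provided by Lemma \ref{lem:separable}); Hilbert--Schmidt operators on a separable Hilbert space are automatically compact. A direct Tonelli computation yields
\[
\int_X \int_X a_r(x,y)^2 \di \mu(y) \di \mu(x) = \int_X \frac{V(x,r)}{V(x,r)^2} \di \mu(x) = \int_X \frac{\di \mu(x)}{V(x,r)},
\]
which is finite precisely by \eqref{eq:V-1_int}. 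Conceptually neither step is difficult; the places where one must take care are the Tonelli swap (unproblematic here thanks to non-negativity of the integrands, avoiding any $\sigma$-finiteness assumption on $\mu$) and the Hilbert--Schmidt--to--compact identification, which requires separability of the underlying Hilbert space and is secured by Lemma \ref{lem:separable}.
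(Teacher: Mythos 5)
Your proposal is correct and follows essentially the same route as the paper's own proof: Jensen's inequality against the probability measure $a_r(x,\cdot)\,\di\mu$ for the $L^p$ bound (with the Tonelli swap producing $A_r^*1$), and the Hilbert--Schmidt characterization on the separable space $L^2(X,\mu)$ for compactness under \eqref{eq:V-1_int}. The only cosmetic difference is that you explicitly note the probability-measure structure and the $p=\infty$ convention; the substance is identical.
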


\begin{proof}
The case $p=+\infty$ is obvious and holds regardless of \eqref{I}.  Let us assume that $p<+\infty$. Let $u \in L^p(X,\mu)$. By Jensen's inequality, for any $x \in X$,
    \[
    |A_ru(x)|^p \le \left( \fint_{B_r(x)} |u|^p \di \mu\right).
    \]
   Thus
    \begin{align*}
   \|A_ru\|_{p}^p & \le \int_X \frac{1}{V(x,r)} \int_{B_r(x)} |u(y)|^p \di \mu(y) \di \mu (x)\\
    & = \int_X \int_X \frac{1}{V(x,r)} \underbrace{1_{B_r(x)}(y)}_{=1_{B_r(y)}(x)} |u(y)|^p \di \mu(y) \di \mu (x)\\
    & = \int_X |u(y)|^p \underbrace{\int_{B_r(y)} \frac{ \di \mu(x)}{V(x,r)} }_{=A_r^*1(y)} \di \mu (y) \le \|A_r^*1\|_\infty\|u\|_p^p
    \end{align*}
    where we have used the Fubini--Tonelli theorem to get the second equality and \eqref{I} for the last inequality.

    Let us now assume that \eqref{eq:V-1_int} holds. Since \[
\int_X \int_X a_r^2(x,y) \di \mu(y) \di \mu(x) = \int_X 
\frac{1}{V(x,r)}\fint_{B_r(x)} \di \mu(y) \di \mu(x) = \int_X \frac{\di \mu(x)}{V(x,r)}
\]
we obtain that $A_r$ is a Hilbert-Schmidt integral operator acting on the separable space $L^2(X,\mu)$ (recall Lemma \ref{lem:separable}); in particular,  $A_r$ is compact \cite[Section IV.6]{RS}.
\end{proof}

In the next statement, we provide an alternative way to prove the compactness of $A_r$ from $L^2(X,\mu)$ to itself. This goes through the compactness of $A_r$ from $L^2(X,\mu)$ to the space of continuous functions $\cC(X)$ which we obtain for compact spaces $X$ satisfying  the following condition:
\begin{equation}\label{eq:vanishing_spheres}\tag{$\mathrm{S}_r$}
\sup_{x \in X} \mu( S_r(x)) = 0
\end{equation}
where $S_r(x)\df \{y \in X : \dist(x,y)=r\}$.

\begin{lemma}\label{lem:reg_comp}
Assume that $(X,\dist,\mu)$ is compact and satisfies \eqref{eq:vanishing_spheres} for some $r>0$. Then $A_r : L^2(X,\mu) \to \cC(X)$ is compact and satisfies
\begin{equation}\label{eq:bound}
    \|
        A_r
    \|_{2\to \infty}
\leq
    \frac{1}{m(r)^{1/2}} \, \cdot
\end{equation}
\end{lemma}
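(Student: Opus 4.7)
The plan is to obtain the bound \eqref{eq:bound} directly from Cauchy--Schwarz and then to deduce compactness via the Arzelà--Ascoli theorem. The key role throughout is played by the vanishing sphere condition $(\mathrm{S}_r)$, which upgrades the discontinuous kernels $y \mapsto 1_{B_r(x)}(y)$ into objects depending continuously on $x$ in both the pointwise $\mu$-a.e.~and $L^2(X,\mu)$ senses.

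First I would observe that $m(r) > 0$: under $(\mathrm{S}_r)$ the function $V(\cdot, r)$ is continuous on $X$, since lower semicontinuity follows from Fatou and upper semicontinuity follows from reverse Fatou (valid because $\mu(X) < +\infty$) together with the identity $\mu(\overline{B_r(x)}) = V(x,r) + \mu(S_r(x)) = V(x,r)$. Full support of $\mu$ then ensures $V(\cdot, r) > 0$, and compactness of $X$ makes the infimum attained, hence positive. For the bound, Cauchy--Schwarz in $L^2(B_r(x), \mu)$ yields, for every $u \in L^2(X,\mu)$,
\[
|A_r u(x)| \le \fint_{B_r(x)} |u| \di \mu \le V(x,r)^{-1/2} \|u\|_2 \le m(r)^{-1/2} \|u\|_2,
\]
which proves \eqref{eq:bound}. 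Moreover, for any $u \in L^2(X,\mu)$, $A_r u \in \cC(X)$: if $x_n \to x$, then $1_{B_r(x_n)}(y) \to 1_{B_r(x)}(y)$ for every $y \notin S_r(x)$, i.e.~$\mu$-a.e., so dominated convergence with majorant $|u| \in L^1(X,\mu)$ yields $\int 1_{B_r(x_n)} u \di \mu \to \int 1_{B_r(x)} u \di \mu$; dividing by $V(x_n, r) \to V(x, r)$ gives $A_r u(x_n) \to A_r u(x)$.

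It then remains to prove compactness by applying Arzelà--Ascoli to $A_r(B_{L^2}(0,1))$ inside $\cC(X)$. Uniform boundedness by $m(r)^{-1/2}$ has just been established. For equicontinuity, Cauchy--Schwarz gives
\[
|A_r u(x) - A_r u(y)| \le \|u\|_2 \cdot \|a_r(x, \cdot) - a_r(y, \cdot)\|_{L^2(X,\mu)},
\]
so it suffices to show that the map $x \mapsto a_r(x, \cdot) \in L^2(X, \mu)$ is continuous --- then compactness of $X$ upgrades this to uniform continuity, and equicontinuity follows. Continuity of this map is yet another dominated convergence argument, since $|a_r(x_n, z) - a_r(x, z)|^2 \to 0$ for $\mu$-a.e.~$z$ (using the continuity of $V(\cdot,r)$ and $(\mathrm{S}_r)$) and is dominated by the constant $4/m(r)^2$, integrable because $\mu(X) < +\infty$. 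The main subtlety throughout is the potential jump discontinuity of $y \mapsto 1_{B_r(x)}(y)$ in $x$: this is exactly what $(\mathrm{S}_r)$ rules out, and without it all of the DCT arguments above would fail.
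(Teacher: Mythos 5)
Your proof is correct and follows the paper's overall strategy: the bound \eqref{eq:bound} via Cauchy--Schwarz in $L^2(B_r(x),\mu)$, and compactness via Arzel\`a--Ascoli, with $(\mathrm{S}_r)$ controlling the discontinuity of the kernel in $x$. The execution differs in ways worth pointing out. The paper establishes continuity of $x \mapsto \int_{B_r(x)} u\,\di\mu$ via the quantitative estimate \eqref{equiContIneq}, bounding $\|1_{B_r(x)} - 1_{B_r(z)}\|_{L^2}$ by the measure of the annulus $B_{r+\dist(x,z)}(x) \setminus B_{r-\dist(x,z)}(x)$, whereas you run dominated convergence directly on the indicators; these are equivalent in substance. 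Where you add genuine value is twofold. First, you supply a proof (Fatou and reverse Fatou, using $\mu(X)<\infty$ and the inclusion $\overline{B_r(x)} \subset B_r(x) \cup S_r(x)$ --- note this is an inclusion rather than the equality you wrote, but the $\le$ direction is all reverse Fatou needs) that $V(\cdot,r)$ is continuous and hence $m(r)>0$ by compactness, facts the paper dispatches with ``The former holds by assumption'' but which are needed for \eqref{eq:bound} to have content. Second, for equicontinuity you prove continuity of the $L^2$-valued map $x \mapsto a_r(x,\cdot)$ and then use compactness of $X$ to upgrade to uniform continuity; this cleanly handles the uniformity in $x$ that the paper's remark ``equicontinuity can be obtained by using the inequality \eqref{equiContIneq}'' leaves implicit, since that inequality produces a modulus depending a priori on the center $x$.
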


\begin{proof}
We start by noticing that if $u \in L^2(X,\mu)$, then $A_r(u)$ is continuous. This follows from $V(\cdot, r)^{-1}$ and $\int_{B_r(x)}u(y)d\mu(y)$ being continuous. The former holds by assumption. To prove the latter, assume that $\|u\|_{L^2(X)} = 1$. Then for any $x, z \in X$,
\begin{align}
    \left|
        \int_{B^r(x)}u(y)d\mu(y)
    -
        \int_{B^r(z)}u(y)d\mu(y)
    \right|
    &\leq
        \left\|
            1_{B_r(x)}
        -
            1_{B_r(z)}
        \right\|_{L^2(X)}
        \left\|
            u
        \right\|_{L^2(X)}\\
    &\leq
        \mu(B_{r+d(x,z)}(x)-B_{r-d(x,z)}(x))^{\frac{1}{2}},
    \label{equiContIneq}
\end{align}  
and $\mu(B_{r+d(x,z)}(x)-B_{r-d(x,z)}(x)) \to 0$ as $\dist(x,z) \to 0$ due to  \eqref{eq:vanishing_spheres}. Moreover, the bound \eqref{eq:bound} is obtained via Hölder's inequality: for any $x \in X$,
\[
|A_ru(x)| \le \left( \fint_{B_r(x)} u^2 \di \mu\right)^{1/2} \le \frac{1}{m(r)^{1/2}} \, \cdot
\]
To prove compactness, consider $\{f_n\} \subset L^2(X, \mu)$ such that $\sup_n \|f_n\|_{2} \leq 1$. Uniform boundedness of $\{A_r(f_n)\}$ follows from \eqref{eq:bound}, and equicontinuity can be obtained by using the inequality \eqref{equiContIneq} applied to the sequence. By the Ascoli-Arzelà Theorem, we can extract from $\{A_r(f_n)\}$ a subsequence which converges in $\mathcal{C}(X)$, concluding the proof.
\end{proof}

\subsection{Adjoint} Let us focus now on the boundedness and the compactness of the adjoint operator $A_r^*$. We begin with a simple observation.

\begin{lemma}\label{lem:L1}
    The operator $A_r^* : L^1(X,\mu) \to L^1(X,\mu)$ is a contraction for any $r>0$.
\end{lemma}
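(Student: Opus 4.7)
The plan is a direct computation by Fubini--Tonelli, exploiting the symmetry $1_{B_r(x)}(y)=1_{B_r(y)}(x)$ that has already been used in the proof of Lemma \ref{lem:average}. Unlike the $A_r$ case on $L^p$, no extra integrability assumption on $V(\cdot,r)^{-1}$ is needed here because the factor $V(y,r)^{-1}$ in $a_r^*$ will be cancelled by the volume $V(y,r)$ produced by integrating $1_{B_r(y)}(x)$ over $x$.

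First, I would fix $u \in L^1(X,\mu)$ and bound
\[
\|A_r^* u\|_1 \le \int_X \int_X a_r^*(x,y)\, |u(y)| \di \mu(y)\di \mu(x).
\]
Since the integrand is non-negative, Fubini--Tonelli applies and the right-hand side equals
\[
\int_X |u(y)| \left(\int_X \frac{1_{B_r(x)}(y)}{V(y,r)} \di \mu(x)\right) \di \mu(y).
\]
Using $1_{B_r(x)}(y)=1_{B_r(y)}(x)$ and factoring $V(y,r)^{-1}$ out of the inner integral, the inner integral evaluates to $V(y,r)/V(y,r)=1$.

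This yields $\|A_r^* u\|_1 \le \|u\|_1$. As a by-product, the iterated integral being finite justifies a posteriori that $A_r^* u(x)$ is well-defined and finite for $\mu$-a.e.\ $x \in X$, and the measurability of $A_r^* u$ follows from the standard Fubini argument applied to the non-negative kernel $a_r^*$. I do not foresee any real obstacle: the only subtlety is remembering to work with $|u|$ before appealing to Fubini so that the joint integrability hypothesis is not needed a priori, and to apply the ball-symmetry identity at the right moment so that the $V(y,r)^{-1}$ weight cancels exactly.
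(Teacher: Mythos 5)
Your proof is correct and is essentially identical to the paper's argument: both bound $\|A_r^*u\|_1$ by the iterated integral of the non-negative kernel times $|u|$, apply Fubini--Tonelli, and use the symmetry $1_{B_r(x)}(y)=1_{B_r(y)}(x)$ to produce the cancelling factor $V(y,r)$. The additional remark on $\mu$-a.e.\ well-definedness is a harmless by-product that the paper leaves implicit.
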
\label{rmk:L1}

\begin{proof}
For any $u \in L^1(X,\mu)$,
\begin{align*}
    \int_X |A_r^*u(x)| \di \mu(x) & \le \int_X \int_{B_r(x)} \frac{|u(y)|}{V(y,r)} \di \mu(y) \di \mu(x)\\
    & =  \int_X \int_X 1_{B_r(x)}(y) \frac{|u(y)|}{V(y,r)} \di \mu(y) \di \mu(x)\\
    & =  \int_X \left( \int_X 1_{B_r(y)}(x)  \di \mu(x) \right)\frac{|u(y)|}{V(y,r)}\di \mu(y) = \int_X |u(y)| \di \mu(y),
\end{align*}
where we used the Fubini--Tonelli theorem to get the penultimate equality.
\end{proof}

We continue with the next lemma which covers the case $p>1$.

\begin{lemma}\label{lem:adjoint}
Assume that there exists $r>0$ such that \eqref{I} holds. Then for any $p \in [1,+\infty]$, the linear operator $A_r^* : L^p(X,\mu) \to L^p(X,\mu)$ is well-defined and bounded with
    \[
    \| A_r^* \|_{p \to p} \le \|A_r^*1\|_\infty^{(p-1)/p}
    \]
Moreover, this operator is the adjoint of $A_r: L^q(X,\mu) \to L^q(X,\mu)$ for $q \in [1,+\infty]$ such that $1/p + 1/q = 1$. Lastly,  if \eqref{eq:V-1_int} holds, then  the operator $A_r^* : L^2(X,\mu) \to L^2(X,\mu)$ is compact.
\end{lemma}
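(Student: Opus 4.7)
My plan is to treat the three assertions in turn, leveraging what has been established in Lemmas~\ref{lem:average} and \ref{lem:L1}. For the $L^p$--$L^p$ boundedness, the endpoint cases are almost free: the case $p=+\infty$ follows at once from $|A_r^*u(x)| \le \|u\|_\infty A_r^*1(x) \le \|u\|_\infty \|A_r^*1\|_\infty$, and the case $p=1$ is exactly Lemma~\ref{lem:L1}. Rather than invoking Riesz--Thorin interpolation for $p \in (1,+\infty)$, I would give a direct self-contained argument by splitting the weight as $V(y,r)^{-1} = V(y,r)^{-1/p}\cdot V(y,r)^{-1/q}$ inside the integral defining $A_r^*u(x)$ and applying Hölder's inequality. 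This produces the pointwise bound
\[
|A_r^*u(x)|^p \le (A_r^*1(x))^{p/q}\, A_r^*(|u|^p)(x).
\]
Integrating over $X$, dominating $(A_r^*1)^{p/q}$ by $\|A_r^*1\|_\infty^{p/q}$, and invoking the $L^1$-contractivity of $A_r^*$ from Lemma~\ref{lem:L1} to control $\int_X A_r^*(|u|^p)\di\mu$ by $\|u\|_p^p$ then yields the stated bound $\|A_r^*\|_{p\to p}\le \|A_r^*1\|_\infty^{(p-1)/p}$.

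The adjointness identity $\int_X (A_r u)\, v \di\mu = \int_X u\, (A_r^* v)\di\mu$ in the $L^p$--$L^q$ duality is a transparent consequence of Fubini--Tonelli combined with the symmetry $1_{B_r(x)}(y)=1_{B_r(y)}(x)$: writing $\int_X (A_r u)(x)\, v(x)\di\mu(x)$ as a double integral and swapping the order of integration produces precisely $\int_X u(y)\, A_r^*v(y)\di\mu(y)$. The application of Fubini--Tonelli here is justified by the boundedness of $A_r$ on $L^q(X,\mu)$ (Lemma~\ref{lem:average}) and of $A_r^*$ on $L^p(X,\mu)$ just obtained, which guarantees absolute integrability of the double integrand.

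For the compactness of $A_r^*:L^2(X,\mu)\to L^2(X,\mu)$ under \eqref{eq:V-1_int}, the shortest route is to observe that $A_r^*$ is the Hilbert-space adjoint of the operator $A_r:L^2(X,\mu)\to L^2(X,\mu)$, which is compact by Lemma~\ref{lem:average} precisely under the assumption \eqref{eq:V-1_int}; since the adjoint of a compact operator on a Hilbert space is compact, the claim follows. Alternatively, one can argue directly that $A_r^*$ is a Hilbert--Schmidt integral operator with kernel $a_r^*$, verifying via Fubini--Tonelli that
\[
\int_X\!\int_X a_r^*(x,y)^2 \di\mu(x)\di\mu(y) = \int_X \frac{\di\mu(y)}{V(y,r)}<+\infty,
\]
and again invoking the separability of $L^2(X,\mu)$ (Lemma~\ref{lem:separable}). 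I do not foresee any serious obstacle; the only point requiring some care is the weight-splitting in the Hölder step for intermediate $p$, which is the computation that actually produces the asymmetric exponent $(p-1)/p$.
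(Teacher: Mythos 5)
Your argument is correct, and for the $L^p$-boundedness at intermediate $p \in (1,+\infty)$ it takes a genuinely different route from the paper, which invokes the Riesz--Thorin interpolation theorem between the $L^1$-contractivity of Lemma~\ref{lem:L1} and the $L^\infty$-bound. You instead give a direct Schur-test-type computation: splitting $V(y,r)^{-1} = V(y,r)^{-1/p}\,V(y,r)^{-1/q}$ inside the kernel and applying H\"older's inequality yields
\[
|A_r^*u(x)| \le (A_r^*1(x))^{1/q}\,\bigl(A_r^*(|u|^p)(x)\bigr)^{1/p},
\]
whence $|A_r^*u(x)|^p \le (A_r^*1(x))^{p/q}\,A_r^*(|u|^p)(x)$; integrating, bounding $(A_r^*1)^{p/q}$ by $\|A_r^*1\|_\infty^{p/q}$, and using the $L^1$-contraction of $A_r^*$ to control $\int_X A_r^*(|u|^p)\di\mu \le \|u\|_p^p$ gives exactly $\|A_r^*\|_{p\to p}\le \|A_r^*1\|_\infty^{1/q} = \|A_r^*1\|_\infty^{(p-1)/p}$. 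This produces the same constant as the paper's interpolation argument but is more elementary and self-contained; it also quietly avoids the minor scalar-field subtlety in Riesz--Thorin over real $L^p$ spaces (harmless here since $A_r^*$ is a positive operator, but worth not having to mention). The adjointness identity via Fubini--Tonelli and the symmetry $1_{B_r(x)}(y)=1_{B_r(y)}(x)$, and the compactness via Schauder's theorem applied to $A_r$ (Lemma~\ref{lem:average}), coincide with the paper's proof; your alternative Hilbert--Schmidt verification for $A_r^*$ is also correct and mirrors the computation done in Lemma~\ref{lem:average} for $A_r$.
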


\begin{proof}
    For the proof of the first assertion, consider $u \in L^\infty(X,\mu)$. Thanks to \eqref{I}, for $\mu$-a.e.~$x \in X$,
    \begin{align*}
     |A_r^*u|(x)  & \le \int_{B_r(x)} \frac{|u(y)|}{V(y,r)} \di \mu(y)  \le \|u\|_\infty \int_{B_r(x)} \frac{\di \mu(y)}{V(y,r)} \le \|A_r^*1\|_\infty \|u\|_\infty.
    \end{align*}
Thus $A_r^* : L^\infty(X,\mu) \to L^\infty(X,\mu)$ is bounded with $\| A_r^* \|_{\infty \to \infty} \le \|A_r^*1\|_\infty $. The conclusion for $p\in (1,+\infty)$ follows from the Riesz-Thorin theorem and Lemma \ref{lem:L1}. 

Let us prove that $A_r$ and $A_r^*$ are adjoint of each other. Consider $u \in L^p(X,\mu)$ and $v\in L^q(X,\mu)$. Then
\begin{align*}
    \int_X A_r^*u(x)\, v(x) \di \mu(x) & = \int_X \int_X 1_{B_r(x)}(y)\,  \frac{u(y)}{V(y,r)} \, v(x) \di \mu(y) \di \mu(x)\\
    & = \int_X \frac{u(y)}{V(y,r)} \int_X 1_{B_r(x)}(y)\, \, v(x) \di \mu(x) \di \mu(y)\\
    & = \int_X u(y) A_rv(y) \di \mu(y)
\end{align*}
where we used the Fubini--Tonelli to get the second equality, and the equality $1_{B_r(x)}(y) = 1_{B_r(y)}(x)$ to get the last one.  As for the compactness of $A_r^*$ under \eqref{eq:V-1_int}, this result is a direct consequence of the Schauder theorem for compact operators which can be applied thanks to Lemma \ref{lem:average}.
\end{proof}

\subsection{Discussion on the assumptions}

Let us discuss the validity of \eqref{I} and  \eqref{eq:V-1_int}.   Recall first that : \eqref{eq:V-1_int} $ \Rightarrow $ \eqref{I}. Both properties hold on totally bounded spaces, as seen in the next lemma.

\begin{lemma}\label{lem:compact_space}
Assume that $(X,\dist)$ is totally bounded. Then \eqref{eq:V-1_int} (and then \eqref{I}) holds for any $r>0$.  
\end{lemma}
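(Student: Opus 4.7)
The plan is to derive a uniform positive lower bound on $V(\cdot,r)$ and a finite upper bound on $\mu(X)$, both consequences of total boundedness, and then deduce that $V(\cdot,r)^{-1}$ is a bounded function on a finite measure space, hence integrable.

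First I would exploit total boundedness at scale $r/2$: cover $X$ by finitely many balls $B_{r/2}(x_1),\ldots,B_{r/2}(x_N)$. The key geometric observation is that if $y \in B_{r/2}(x_i)$, then by the triangle inequality $B_{r/2}(x_i) \subset B_r(y)$, so
\[
V(y,r) \ge V(x_i,r/2).
\]
Setting $c \df \min_{1 \le i \le N} V(x_i, r/2)$, which is strictly positive because $\mu$ is fully supported and there are only finitely many $x_i$'s, we obtain the uniform lower bound $V(y,r) \ge c > 0$ for every $y \in X$, and therefore
\[
V(\cdot,r)^{-1} \le c^{-1} \quad \mu\text{-a.e.}
\]

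Next I would show that $\mu(X) < +\infty$. Using total boundedness again, cover $X$ by finitely many balls $B_r(z_1),\ldots,B_r(z_M)$; then $\mu(X) \le \sum_j V(z_j,r) < +\infty$ since each ball has finite measure by the standing assumption on $(X,\dist,\mu)$. Combining this with the previous uniform bound yields
\[
\int_X \frac{\di \mu(y)}{V(y,r)} \le \frac{\mu(X)}{c} < +\infty,
\]
which is precisely \eqref{eq:V-1_int}, and \eqref{I} then follows from the implication already recorded in the text.

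There is no real obstacle here; the only subtlety is making sure to pick the covering at the right scale, namely $r/2$, so that the inclusion $B_{r/2}(x_i) \subset B_r(y)$ for $y \in B_{r/2}(x_i)$ can be used to transfer the positive measure of the cover elements into a pointwise lower bound on $V(\cdot,r)$. Once this geometric step is in place, the rest is a one-line estimate.
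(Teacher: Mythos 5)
Your proof is correct and follows essentially the same route as the paper: cover $X$ at scale $r/2$, use the inclusion $B_{r/2}(x_i) \subset B_r(y)$ for $y \in B_{r/2}(x_i)$ to get the uniform lower bound on $V(\cdot,r)$, and conclude via $\mu(X) < +\infty$. The only difference is that you spell out the finiteness of $\mu(X)$ explicitly, whereas the paper takes it for granted (having already noted it earlier in the compact case, and it holds for totally bounded spaces by the same one-line covering argument you give).
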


\begin{proof}
      Consider $r>0$ and a finite cover $\{B_{r/2}(x_i)\}$ of $X$. For any $x \in X$, there exists $i$ such that $x \in B_{r/2}(x_i)$. Then $B_r(x)$ contains $B_{r/2}(x_i)$ so $V(x,r) \ge V(x_i,r/2) \ge \min_j V(x_j,r/2)>0$. Thus
      \[
      \int_X \frac{\di \mu(x)}{V(x,r)} \le \frac{\mu(X)}{\min_j V(x_j,r/2)} < +\infty.
      \]
\end{proof}

\begin{rem}\label{rem:Ar}
If $\mu(X)=+\infty$ and $M(r)<+\infty$ then \eqref{eq:V-1_int} cannot hold :
\[
\int_X \frac{\di \mu(x)}{V(x,r)} \ge \frac{\mu(X)}{M(r)} = +\infty.
\]
This happens, for instance, on $\mathbb{R}^n$ endowed with the Euclidean distance and the Lebesgue measure. More generally, this property cannot hold on a locally compact topological group endowed with a left-invariant metric compatible with the Haar measure and with infinite volume (see \cite[Lemma 1]{Struble} for more details about these spaces). 

If $\mu(X)<\infty$ and $m(r)>0$, then \eqref{eq:V-1_int} always holds :
\[
\int_X \frac{\di \mu(x)}{V(x,r)} \le \frac{\mu(X)}{m(r)} < +\infty.
\]
In this regard, observe that if $X$ is not totally bounded, then there exist $r>0$ small enough and a countable family of disjoint balls $\{B_{r}(x_i)\}$ in $X$, so that $\mu(X)<\infty$ and $m(r)>0$ cannot hold simultaneously:
\[
\mu(X) \ge \sum_{i} V(x_i,r).
\]
\end{rem}

Let us now focus on \eqref{I}. We show below that this condition holds on so-called doubling spaces.  Let us recall this classical property and its uniform local variant, see e.g.~\cite{HKST} for more details.
 
 \begin{definition}\label{def:loc_doubling}
The space $(X,\dist,\mu)$ is called globally doubling if there exists $C>0$ such that for any $x \in X$ and $r>0$,
\begin{equation}\label{eq:rloc_doubling}
 V(x,2r) \le CV(x,r).
 \end{equation}
It is called uniformly locally doubling if there exist $C,r_0>0$ such that \eqref{eq:rloc_doubling} holds for any $x \in X$ and $r \in (0,r_0)$.
\end{definition}

The celebrated Bishop--Gromov theorem (see e.g.~\cite[Theorem III.4.5]{Chavel}) implies that any complete Riemannian manifold with a uniform lower bound on the Ricci curvature is uniformly uniformly locally doubling, and globally doubling if the uniform bound is non-negative. This is also true for metric spaces with generalized sectional curvature bounded from below in the sense of Alexandrov \cite[Theorem 10.6.6]{BBI} and $\mathrm{CD}(K,N)$ metric measure space \cite[Corollary 30.14]{Villani}.

The next lemma relates the uniformly local doubling condition with \eqref{I}.

\begin{lemma}
Let $(X,\dist,\mu)$ be uniformly locally doubling with parameters $C,r_0$. Then \eqref{I} holds with $\|A_r^*1\|_\infty \le C$ for any $r \in (0,r_0)$.
\end{lemma}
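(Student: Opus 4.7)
The plan is to directly bound the integral defining $A_r^*1(x)$ for an arbitrary point $x \in X$ and radius $r \in (0,r_0)$. Recall that
\[
A_r^*1(x) = \int_{B_r(x)} \frac{\di \mu(y)}{V(y,r)},
\]
so it suffices to produce a uniform lower bound on $V(y,r)$ in terms of $V(x,r)$ that is valid for every $y \in B_r(x)$, with the constant prefactor equal to $1/C$.

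The key geometric observation is the inclusion $B_r(x) \subset B_{2r}(y)$ whenever $y \in B_r(x)$: indeed, for any $z \in B_r(x)$, the triangle inequality gives $\dist(y,z) \le \dist(y,x) + \dist(x,z) < 2r$. Taking $\mu$-measures yields $V(x,r) \le V(y,2r)$. Since $r < r_0$, the uniformly local doubling hypothesis applied at $y$ gives $V(y,2r) \le C\, V(y,r)$, and combining the two inequalities produces
\[
V(y,r) \ge \frac{V(x,r)}{C} \qquad \text{for all } y \in B_r(x).
\]

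With this bound in hand, the proof reduces to a one-line estimate:
\[
A_r^*1(x) = \int_{B_r(x)} \frac{\di \mu(y)}{V(y,r)} \le \frac{C}{V(x,r)} \int_{B_r(x)} \di \mu(y) = \frac{C}{V(x,r)} \cdot V(x,r) = C.
\]
Since this holds for every $x \in X$, we obtain $\|A_r^*1\|_\infty \le C$, which is exactly \eqref{I} with the stated constant. There is no real obstacle here; the only non-trivial ingredient is the geometric inclusion $B_r(x) \subset B_{2r}(y)$ for $y \in B_r(x)$, which turns the local doubling condition into the required uniform control on $1/V(\cdot,r)$ over balls of radius $r$.
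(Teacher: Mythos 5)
Your proof is correct and follows essentially the same approach as the paper: both start from the inclusion $B_r(x) \subset B_{2r}(y)$ for $y \in B_r(x)$, apply the doubling condition at $y$, and bound the integral by $C$. The only cosmetic difference is that the paper phrases the estimate as an average $\fint_{B_r(x)} V(x,r)/V(y,r)\,\di\mu(y) \le \fint_{B_r(x)} V(y,2r)/V(y,r)\,\di\mu(y) \le C$, whereas you pull out the uniform lower bound $V(y,r) \ge V(x,r)/C$ before integrating.
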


\begin{proof}
For any $x \in X$ and $r \in (0,r_0)$, the triangle inequality yields that $B_r(x) \subset B_{2r}(y)$ for any $y \in B_r(x)$. Then
\[
A_r^*1(x) = \fint_{B_r(x)} \frac{V(x,r)}{V(y,r)} \di \mu(y) \le \fint_{B_{r}(x)} \frac{V(y,2r)}{V(y,r)} \di \mu(y) \le C.
\]
\end{proof}

\begin{rem}
Of course if $(X,\dist,\mu)$ is globally doubling with constant $C$, then \eqref{I} holds with $\|A_r^*1\|_\infty \le C$ for any $r >0$.
\end{rem}

\begin{rem}
The previous result notably implies that \eqref{I} may hold in situations where \eqref{eq:V-1_int} does not. This happens e.g.~on a non-compact Riemannian manifold $(M,g)$ with non-negative Ricci curvature endowed with its canonical Riemannian distance $\dist$ and volume measure $\mu$. Indeed, such a space has infinite volume, and the Bishop--Gromov theorem implies that $M(r) \le \mathbb{V}^n(r)$ for any $r>0$, where $\mathbb{V}^n(r)$ is the Lebesgue measure of an Euclidean ball of radius $r$ in $\mathbb{R}^n$. From Remark \ref{rem:Ar}, we get that $M$ cannot satisfy \eqref{eq:V-1_int} for any $r>0$, while it does satisfies \eqref{I} thanks to the global doubling condition.
\end{rem}

We conclude this discussion with two final remarks.  First, if $m(r)>0$ and $M(r)<+\infty$, then \eqref{I} always holds with
    \[
    \|A_r^*1\|_\infty \le \frac{M(r)}{m(r)}\, \cdot 
    \]
    This happens on locally compact topological groups endowed with a left-invariant distance and their Haar measure, compare with Remark \ref{rem:Ar}.  Secondly, \eqref{I} can easily be seen as a weak variant of the comparability conditions introduced in \cite{Aldaz1,MT2}.

\subsection{Symmetrization}

For any $x,y \in X$ and $r>0$, set
\begin{align*}
\tilde{a}_r(x,y) & = \frac{1}{2} \left( a_r(x,y) + a^*_r(x,y) \right)\\
& = \frac{1}{2} \left( \frac{1}{V(x,r)} + \frac{1}{V(y,r)} \right) 1_{B_r(x)} (y)
\end{align*}
Consider $u \in L^1_{\text{loc}}(X,\mu)$ such that $v(\cdot) \df u(\cdot)/V(\cdot,r) \in L^1_{\text{loc}}(X,\mu)$. For any $x \in X$ and $r>0$ such that $u$ and $v$ are $\mu$-integrable on $B_r(x)$, set
\begin{equation}\label{eq:symm}
\tilde{A}_ru(x) \df \frac{1}{2}(A_ru(x) + A_r^*u(x)) = \int_{X} \tilde{a}_r(x,y) u(y)\di \mu(y).
\end{equation}
Then the next lemma is an obvious consequence of Lemma \ref{lem:average} and Lemma \ref{lem:adjoint}.

\begin{corollary}\label{lem:sym}
    Assume that there exists $r>0$ such that \eqref{I} holds. Then $\tilde{A}_r : L^2(X,\mu) \to L^2(X,\mu)$ is a self-adjoint operator such that
    \[
    \|\tilde{A}_r\|_{2 \to 2} \le \|A_r^*1\|_\infty^{1/2}.
    \]
    Moreover, if \eqref{eq:V-1_int} holds, then $\tilde{A}_r : L^2(X,\mu) \to L^2(X,\mu)$ is compact.
\end{corollary}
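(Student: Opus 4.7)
The plan is to read off all three claims directly from the decomposition $\tilde{A}_r = \tfrac{1}{2}(A_r + A_r^*)$ together with Lemma \ref{lem:average} and Lemma \ref{lem:adjoint}; no new estimate is needed.

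First I would handle boundedness. Assuming \eqref{I}, Lemma \ref{lem:average} with $p=2$ gives $\|A_r\|_{2\to 2} \le \|A_r^*1\|_\infty^{1/2}$, and Lemma \ref{lem:adjoint} with $p=2$ gives the same bound for $A_r^*$. By the triangle inequality,
\[
\|\tilde{A}_r\|_{2\to 2} \le \tfrac{1}{2}\bigl(\|A_r\|_{2\to 2} + \|A_r^*\|_{2\to 2}\bigr) \le \|A_r^*1\|_\infty^{1/2},
\]
which is the desired norm bound. In particular $\tilde{A}_r$ maps $L^2(X,\mu)$ into itself.

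Next I would check self-adjointness. Lemma \ref{lem:adjoint} asserts that the operator $A_r^*$ defined via the kernel $a_r^*$ is the Hilbert-space adjoint of $A_r$ on $L^2(X,\mu)$. Taking adjoints termwise in \eqref{eq:symm} and using $(A_r^*)^* = A_r$ gives
\[
(\tilde{A}_r)^* = \tfrac{1}{2}\bigl(A_r^* + (A_r^*)^*\bigr) = \tfrac{1}{2}\bigl(A_r^* + A_r\bigr) = \tilde{A}_r,
\]
so $\tilde{A}_r$ is self-adjoint.

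Finally, under the stronger hypothesis \eqref{eq:V-1_int}, Lemma \ref{lem:average} states that $A_r : L^2(X,\mu) \to L^2(X,\mu)$ is compact, and Lemma \ref{lem:adjoint} states the same for $A_r^*$. Since the space of compact operators on $L^2(X,\mu)$ is a linear subspace of the bounded operators, the combination $\tfrac{1}{2}(A_r + A_r^*) = \tilde{A}_r$ is compact as well. There is no genuine obstacle here; the only point requiring mild care is making sure that the $A_r^*$ appearing in the symmetrized kernel $\tilde{a}_r$ really is the functional-analytic adjoint of $A_r$ on $L^2(X,\mu)$, which is precisely what Lemma \ref{lem:adjoint} guarantees.
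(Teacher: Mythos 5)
Your proof is correct and takes the same route the paper intends: the paper simply declares the corollary ``an obvious consequence of Lemma \ref{lem:average} and Lemma \ref{lem:adjoint},'' and you have filled in exactly those steps (triangle inequality for the norm bound, adjointness of $A_r$ and $A_r^*$ for self-adjointness, and stability of compactness under linear combinations). Nothing is missing.
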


\section{Symmetrized AMV operators}

In this section, we provide our working definition of the symmetrized AMV $r$-Laplace operator
$\tilde{\Delta}_r$ and we derive several spectral properties in a general setting.

\subsection{Definitions}

For this subsection, we consider a metric measure space $(X,\dist,\mu)$ satisfying \eqref{I} for some fixed $r>0$. 

\begin{definition}
The symmetrized AMV $r$-Laplace operator of $(X,\dist,\mu)$ is
\[
\tilde{\Delta}_r \df \frac{1}{r^2} (\tilde{A}_r - [\tilde{A}_r1] \mathrm{I})
\]
where we recall that $\tilde{A}_r$ is defined in \eqref{eq:symm}.
\end{definition}

\begin{rem}
We may use the notation $\tilde{\Delta}_{r,\mathfrak{X}}$ to specify that we work on the metric measure space $\mathfrak{X} = (X,\dist,\mu)$.
\end{rem}

\begin{lemma}\label{lem:sAMV}
$\tilde{\Delta}_r$ is a bounded, self-adjoint operator acting on $L^2(X,\mu)$ with
\begin{equation}\label{eq:boundAMV}
\|\tilde{\Delta}_r\|_{2 \to 2} \le \frac{1}{2r^2} \left( 2 \|A_r^*1\|_\infty^{1/2}+ \|A_r^*1\|_\infty + 1 \right).
\end{equation}
\end{lemma}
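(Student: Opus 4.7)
The plan is to decompose $\tilde{\Delta}_r$ as $r^{-2}$ times the difference of two bounded self-adjoint operators on $L^2(X,\mu)$, namely $\tilde{A}_r$ on the one hand and the multiplication operator by the function $\tilde{A}_r 1$ on the other, and to control each piece separately via the triangle inequality.

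First, Corollary \ref{lem:sym} gives directly that $\tilde{A}_r : L^2(X,\mu) \to L^2(X,\mu)$ is bounded and self-adjoint, with operator norm at most $\|A_r^*1\|_\infty^{1/2}$. This handles the first summand in the definition of $\tilde{\Delta}_r$.

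Second, I would compute $\tilde{A}_r 1$ explicitly. By definition of the averaging operator, $A_r 1 \equiv 1$, so
\[
\tilde{A}_r 1 = \tfrac{1}{2}(A_r 1 + A_r^*1) = \tfrac{1}{2}\bigl(1 + A_r^*1\bigr).
\]
Under \eqref{I} this is a real-valued function in $L^\infty(X,\mu)$ with $\|\tilde{A}_r 1\|_\infty \le \tfrac{1}{2}(1 + \|A_r^*1\|_\infty)$. Consequently, multiplication by $\tilde{A}_r 1$ defines a bounded self-adjoint operator on $L^2(X,\mu)$ (self-adjointness follows from the function being real-valued, and boundedness from the $L^\infty$ bound), with operator norm at most $\tfrac{1}{2}(1 + \|A_r^*1\|_\infty)$.

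Combining both contributions via the triangle inequality gives
\[
\|\tilde{\Delta}_r\|_{2 \to 2} \le \frac{1}{r^2}\Bigl(\|A_r^*1\|_\infty^{1/2} + \tfrac{1}{2}\bigl(1 + \|A_r^*1\|_\infty\bigr)\Bigr) = \frac{1}{2r^2}\bigl(2\|A_r^*1\|_\infty^{1/2} + \|A_r^*1\|_\infty + 1\bigr),
\]
which is exactly \eqref{eq:boundAMV}. Self-adjointness of $\tilde{\Delta}_r$ then follows since it is a real linear combination of two self-adjoint operators. There is no real obstacle in this argument: once one observes that $A_r 1 \equiv 1$ so that $\tilde{A}_r 1$ inherits boundedness directly from \eqref{I}, the statement reduces to bookkeeping on top of Corollary \ref{lem:sym}.
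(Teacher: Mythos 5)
Your proposal is correct and mirrors the paper's own argument: both decompose $\tilde{\Delta}_r = r^{-2}(\tilde{A}_r - [\tilde{A}_r 1]\mathrm{I})$, bound $\|\tilde{A}_r\|_{2\to 2}$ via Corollary \ref{lem:sym}, and bound the multiplication operator by $\|\tilde{A}_r 1\|_\infty = \tfrac{1}{2}(1 + \|A_r^*1\|_\infty)$ using $A_r 1 \equiv 1$. Self-adjointness is likewise obtained in both cases by noting each summand is self-adjoint.
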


\begin{proof}
The self-adjointness of $\tilde{\Delta}_r$ is obvious because $\tilde{A}_r$ and $[\tilde{A}_r1] \mathrm{I}$ are self-adjoint too. The boundedness is a consequence of Lemma \ref{lem:sym}. Indeed,
\begin{align*}
    \|\tilde{\Delta}_r\|_{2 \to 2} & \le \frac{1}{r^2} \left( \|\tilde{A}_r\|_{2 \to 2} + \|[\tilde{A}_r 1 ]I\|_{2 \to 2}\right) \\
    & \le \frac{1}{r^2} \left(\|A_r^*1\|_\infty^{1/2}+ \|\tilde{A}_r 1 \|_{\infty}\underbrace{\|I\|_{2 \to 2}}_{=1}\right) \\
    & \le \frac{1}{r^2} \left(\|A_r^*1\|_\infty^{1/2} + \frac{\|A_r 1 \|_{\infty} + \|A_r^* 1 \|_{\infty}}{2}\right)\\
    & = \frac{1}{r^2} \left(\|A_r^*1\|_\infty^{1/2} + \frac{1 + \|A_r^*1\|_\infty}{2}\right)
\end{align*}
hence $\tilde{\Delta}_r: L^2(X,\mu) \to L^2(X,\mu)$ is bounded and \eqref{eq:boundAMV} holds. 
\end{proof}

\begin{definition}
The energy functional $\tilde{E}_r$ of $(X,\dist,\mu)$ is the quadratic form on $L^2(X,\mu)$ defined by
\[
\tilde{E}_r(f) \df \frac{1}{4} \int_X \int_X 1_{B_r(x)}(y)\left( \frac{1}{V(x,r)} +\frac{1}{V(y,r)} \right)  \left( \frac{f(x) -f(y)}{r} \right)^2 \di \mu(y) \di \mu(x).
\]
The associated bilinear form, which we still denote by $\tilde{E}_r$, is given by
    \begin{align*}
\tilde{E}_r(f,\psi) & \df \frac{1}{4} \int_X \int_{X} 1_{B_r(x)}(y)\left( \frac{1}{V(x,r)} +\frac{1}{V(y,r)} \right) \\  
& \qquad \qquad \qquad \qquad \times \frac{(f(x) -f(y))(\psi(x)-\psi(y))}{r^2} \di \mu(y) \di \mu(x).
    \end{align*}
\end{definition}

\begin{rem}
A suitable use of the Fubini--Tonelli theorem shows that the energy functional $\tilde{E}_r(f)$ equals the approximate Korevaar--Schoen energy \cite{KS}
\[
\frac{1}{2} \int_X \fint_{B_r(x)} \frac{|f(y)-f(x)|^2}{r^2} \di \mu(y) \di \mu(x).
\]
\end{rem}

\begin{rem}
We may also use the notation $\tilde{E}_{r,\mathfrak{X}}$ to specify the metric measure space $\mathfrak{X} = (X,\dist,\mu)$.
\end{rem}

The next lemma goes back to \cite[Lemma 3.1]{AKS2}. We provide a quick proof for completeness.

    \begin{lemma}
For any $f,\psi \in L^2(X,\mu)$,
\begin{equation}\label{eq:IBP}
    \tilde{E}_r(f,\psi) = \langle - \tilde{\Delta}_r f, \psi \rangle_{L^2}.
\end{equation}
\end{lemma}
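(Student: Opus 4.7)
The plan is to expand both sides starting from the definitions and reduce the identity to a symmetry-based reorganization of a double integral. Since $\tilde{A}_r 1(x) = \int_X \tilde{a}_r(x,y)\di\mu(y)$, one can rewrite
\[
-\tilde{\Delta}_r f(x) = \frac{1}{r^2}\int_X \tilde{a}_r(x,y)\bigl(f(x)-f(y)\bigr)\di\mu(y).
\]
Taking the $L^2$-pairing against $\psi$ then gives
\[
\langle -\tilde{\Delta}_r f,\psi\rangle_{L^2} = \frac{1}{r^2}\int_X\int_X \tilde{a}_r(x,y)\bigl(f(x)-f(y)\bigr)\psi(x)\di\mu(y)\di\mu(x).
\]

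The next step is to exploit the symmetry $\tilde{a}_r(x,y) = \tilde{a}_r(y,x)$, which is transparent from the explicit formula for $\tilde{a}_r$ together with $1_{B_r(x)}(y)=1_{B_r(y)}(x)$. Swapping the names of $x$ and $y$ in the double integral, the very same quantity equals
\[
\frac{1}{r^2}\int_X\int_X \tilde{a}_r(x,y)\bigl(f(y)-f(x)\bigr)\psi(y)\di\mu(y)\di\mu(x).
\]
Averaging these two expressions and inserting the explicit kernel from Subsection 2.4 yields exactly $\tilde{E}_r(f,\psi)$.

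The only subtlety is justifying Fubini--Tonelli, both to turn $\langle -\tilde\Delta_r f,\psi\rangle$ into a double integral and to swap coordinates. For this I would invoke absolute integrability: by Corollary \ref{lem:sym} the operator $\tilde{A}_r$ is bounded on $L^2$, so applying Cauchy--Schwarz one obtains
\[
\int_X\int_X \tilde{a}_r(x,y)|f(y)||\psi(x)|\di\mu(y)\di\mu(x) \le \|\tilde{A}_r(|f|)\|_2\,\|\psi\|_2<+\infty,
\]
while the term with $f(x)$ in place of $f(y)$ is bounded by $\|\tilde{A}_r 1\|_\infty \|f\|_2\|\psi\|_2$, which is finite since $\|A_r 1\|_\infty \le 1$ and $\|A_r^*1\|_\infty<+\infty$ by \eqref{I}. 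This absolute integrability legitimizes every application of Fubini--Tonelli above.

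No step is genuinely hard here; the main book-keeping issue is simply making sure the Fubini exchanges are warranted, which is why I would dispatch the integrability estimate first and only then carry out the algebraic symmetrization.
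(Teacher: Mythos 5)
Your argument is correct and follows essentially the same route as the paper: expand the pairing into a double integral, use the kernel symmetry $1_{B_r(x)}(y)=1_{B_r(y)}(x)$ together with Fubini, and average to recover the symmetric energy form (the paper simply runs the same computation in the reverse direction, starting from $\tilde{E}_r(f,\psi)$). Your explicit absolute-integrability check justifying Fubini is a worthwhile addition that the paper leaves implicit.
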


\begin{proof}Note that
\begin{align*}
    \tilde{E}_r(f,\psi) & = \frac{1}{4} \int_X \int_{B_r(x)}\left( \frac{1}{V(x,r)} +\frac{1}{V(y,r)} \right) \frac{(f(x) -f(y))\psi(x)}{r^2} \di \mu(y) \di \mu(x) \\
    & - \frac{1}{4} \int_X \int_{X} 1_{B_r(x)}(y)\left( \frac{1}{V(x,r)} +\frac{1}{V(y,r)} \right) \frac{(f(x) -f(y))\psi(y)}{r^2} \di \mu(y) \di \mu(x).
\end{align*}
Using $1_{B_r(x)}(y) = 1_{B_r(y)}(x)$ and then the Fubini theorem, we can rewrite the second term as the opposite of the first one, so that we eventually get \eqref{eq:IBP}.
\end{proof}

\begin{rem}\label{rem:non-neg}
Observe that \eqref{eq:IBP} implies that $- \tilde{\Delta}_r$ is a non-negative operator, since for any $f \in L^2(X,\mu)$,
\[
\langle - \tilde{\Delta}_r f, f \rangle_{L^2} = \tilde{E}_r(f) \ge 0
\]
\end{rem}

Let us recall the definition of spectrum.

\begin{definition}
We let $\sigma(-\tilde{\Delta}_r)$ denote the spectrum of $-\tilde{\Delta}_r$, that is to say, the set of elements $\lambda \in \bC^*$ such that $-\tilde{\Delta}_r - \lambda \mathrm{I} : L^2(X,\mu) \to L^2(X,\mu)$ is not a bijection.
\end{definition}
It is well-known from classical functional analysis that the spectrum $\sigma(T)$ of a bounded operator $T$ acting on a Banach space $E$ can be decomposed as
\[
\sigma(T) = \sigma_{\text{p}}(T) \cup \sigma_{ \text{c}}(T) \cup \sigma_{ \text{a}}(T)
\]
where:
\begin{itemize}
\item $\sigma_{\text{p}}(T)$ is the point spectrum, that is to say, the set of $\lambda \in \bC^+$  such that $(T - \lambda \mathrm{I})f = 0$ for some non-zero $f \in E$, in which case $\lambda$ is called an eigenvalue and $f$ an eigenvector of $T$,
\item $\sigma_{ \text{c}}(T)$ is the compression spectrum, that is to say, the set of $\lambda \in \bC^+$  whose conjugate $\bar{\lambda}$ is an eigenvalue of the adjoint $T^*$,
\item $\sigma_{\text{a}}(T)$ is the approximate point spectrum, that is to say, the set of $\lambda \in \bC^+$ for which there exists $(f_n) \subset E$ with $\|f_n\| = 1$ for any $n$ such that $\|(T - \lambda \mathrm{I})f_n\| \to 0$.
\end{itemize}
Since $-\tilde{\Delta}_r$ is self-adjoint and non-negative, we know that
\[
\sigma(-\tilde{\Delta}_r) \subset [0, +\infty].
\]
This implies that $\sigma_{\text{p}}(-\tilde{\Delta}_r) = \sigma_{\text{c}}(-\tilde{\Delta}_r)$, so that
\begin{equation}\label{eq:decomp}
\sigma(-\tilde{\Delta}_r) = \sigma_{\text{p}}(-\tilde{\Delta}_r) \cup \sigma_{\text{a}}(-\tilde{\Delta}_r).
\end{equation}

\begin{definition}\label{def:min-max}
For any $k \in \mathbb{N}$, we define
\begin{equation}\label{eq:min-max}
\tilde{\lambda}_{k,r} \df   \inf_{V \in \mathcal{G}_{k+1}(L^2(X,\mu))}
    \sup_{f \in V}
    \frac{\tilde{E}_r(f)}
    {\|f\|_2},
\end{equation}
where $\mathcal{G}_{k+1}(L^2(X,\mu))$ is the $(k+1)$-th Grassmannian of $L^2(X,\mu)$.
\end{definition}

\begin{rem}\label{rem:Courant_etc} Let $\sigma_{\text{ess}}(-\tilde{\Delta}_r)$ denote the essential spectrum of $-\tilde{\Delta}_r$, i.e.~the closed subset of $\sigma(-\tilde{\Delta}_r)$ made of those $\lambda$ such that $-\tilde{\Delta}_r - \lambda \mathrm{I}$ is not a Fredholm operator. Since $-\tilde{\Delta}_r$ is self-adjoint, the Fischer--Polyà minimum-maximum principle (see e.g.~\cite[p.12]{WeinsteinStenger}) asserts that if there exists a positive integer $N$ such that $\tilde{\lambda}_{N,r} < \min  \sigma_{\text{ess}}(-\tilde{\Delta}_r)$ then $-\tilde{\Delta}_r$ admits $N+1$ isolated eigenvalues
$$\lambda_0(-\tilde{\Delta}_r) \le \ldots \le \lambda_N(-\tilde{\Delta}_r) < \min  \sigma_{\text{ess}}(-\tilde{\Delta}_r)$$
such that for any $k \in \{0,\ldots,N\}$,
$$
\tilde{\lambda}_{k,r}  = \lambda_{k}(-\tilde{\Delta}_r).
$$
\end{rem}

\subsection{Spectral properties}

Our first spectral result on $\tilde{\Delta}_r$ is the following. Note that we need the compactness of $\tilde{A}_r$ here, thus we assume \eqref{eq:V-1_int}.

\begin{prop}\label{prop:eigenvalueProperties}
   Let $(X,\dist,\mu)$ be a metric measure space satisfying \eqref{eq:V-1_int} for some fixed $r>0$. Assume that $\lambda \in \sigma(-\tilde{\Delta}_r)$ satisfies
    \begin{equation}\label{eq:cond_on_lambda}
        \lambda < \inf_{y \in X}[\tilde{A}_r1](y)/{r^2}.
    \end{equation}
    Then $\lambda$ is an isolated eigenvalue of $-\tilde{\Delta}_r$ which does not belong to $\sigma_{\text{ess}}(-\tilde{\Delta}_r)$.
\end{prop}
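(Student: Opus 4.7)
\smallskip

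\noindent\textbf{Proof plan.} The idea is to turn $-\tilde\Delta_r - \lambda \mathrm I$ into a \emph{compact perturbation of an invertible operator}, so that it becomes Fredholm, which then rules out membership in $\sigma_{\text{ess}}(-\tilde\Delta_r)$ and (combined with self-adjointness) forces $\lambda$ to be an isolated eigenvalue.

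\smallskip

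\noindent\textbf{Step 1: algebraic rewriting.} Write $M_r \df \tilde A_r 1 \in L^\infty(X,\mu)$. Under \eqref{eq:V-1_int} (which entails \eqref{I}) one has $M_r \ge 1/2$ since $A_r 1 \equiv 1$ and $A_r^*1 \ge 0$. Denote by $\mathrm{m}_{M_r}$ the bounded multiplication operator by $M_r$ on $L^2(X,\mu)$. By definition,
\[
-\tilde\Delta_r - \lambda \mathrm I \;=\; \frac{1}{r^2}\bigl(\mathrm{m}_{M_r} - \tilde A_r\bigr) - \lambda \mathrm I \;=\; \frac{1}{r^2}\bigl(\mathrm{m}_{M_r - r^2 \lambda} - \tilde A_r\bigr).
\]

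\smallskip

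\noindent\textbf{Step 2: invertibility of the multiplication piece.} The assumption \eqref{eq:cond_on_lambda} says exactly that
\[
c \df \inf_{y \in X} M_r(y) - r^2 \lambda \;>\; 0,
\]
so $M_r - r^2\lambda \ge c$ $\mu$-a.e. Therefore $\mathrm{m}_{M_r - r^2 \lambda}$ is a bounded, boundedly invertible operator on $L^2(X,\mu)$, its inverse being multiplication by $(M_r - r^2\lambda)^{-1} \in L^\infty(X,\mu)$.

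\smallskip

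\noindent\textbf{Step 3: factoring as invertible $\circ$ (identity $-$ compact).} Factoring the multiplication operator out,
\[
-\tilde\Delta_r - \lambda \mathrm I \;=\; \frac{1}{r^2}\, \mathrm{m}_{M_r - r^2\lambda}\bigl(\mathrm I - K_\lambda\bigr), \qquad K_\lambda \df \mathrm{m}_{M_r - r^2\lambda}^{-1}\, \tilde A_r.
\]
By Corollary \ref{lem:sym}, $\tilde A_r : L^2(X,\mu) \to L^2(X,\mu)$ is compact, and $\mathrm{m}_{M_r - r^2\lambda}^{-1}$ is bounded, so $K_\lambda$ is compact. Consequently $\mathrm I - K_\lambda$ is Fredholm of index $0$, and thus $-\tilde\Delta_r - \lambda \mathrm I$, being the composition of a bounded invertible operator with a Fredholm operator, is itself Fredholm.

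\smallskip

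\noindent\textbf{Step 4: conclusion.} Step 3 says $\lambda \notin \sigma_{\text{ess}}(-\tilde\Delta_r)$. Since $-\tilde\Delta_r$ is bounded and self-adjoint (Lemma \ref{lem:sAMV}), a standard fact from spectral theory of self-adjoint operators tells us that $\sigma(-\tilde\Delta_r) \setminus \sigma_{\text{ess}}(-\tilde\Delta_r)$ consists entirely of isolated eigenvalues of finite multiplicity. Thus $\lambda \in \sigma(-\tilde\Delta_r)$ together with $\lambda \notin \sigma_{\text{ess}}(-\tilde\Delta_r)$ yields the claim.

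\smallskip

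The only delicate point is verifying that the factorization in Step 3 is honest, i.e.\ that both factors make sense as bounded operators and that $K_\lambda$ is indeed compact; this hinges on hypothesis \eqref{eq:V-1_int} (through Corollary \ref{lem:sym}) and on the strict inequality in \eqref{eq:cond_on_lambda}, which provides the uniform positive lower bound $c$ needed to invert $\mathrm{m}_{M_r - r^2\lambda}$. The rest is bookkeeping.
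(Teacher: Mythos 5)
Your proof is correct, and it takes a genuinely cleaner route than the paper's. The paper proceeds by a direct, hands-on verification of the three required properties (eigenvalue, isolation, Fredholmness), each time extracting a convergent subsequence from the compactness of $\tilde A_r$ and then dividing through by the uniformly positive multiplier $[\tilde A_r 1]-r^2\lambda$; the isolation step, for instance, is a contradiction argument with orthonormal eigenfunctions of nearby eigenvalues. You instead factor $-\tilde\Delta_r - \lambda\mathrm I$ as a boundedly invertible multiplication operator composed with $\mathrm I - K_\lambda$ for compact $K_\lambda$, observe this is Fredholm of index $0$, and then invoke the standard dichotomy $\sigma(T)=\sigma_{\mathrm{disc}}(T)\sqcup\sigma_{\mathrm{ess}}(T)$ for bounded self-adjoint $T$, where $\sigma_{\mathrm{disc}}$ consists precisely of isolated eigenvalues of finite multiplicity. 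The two arguments use the same two ingredients (compactness of $\tilde A_r$ under \eqref{eq:V-1_int}, and the strict lower bound supplied by \eqref{eq:cond_on_lambda}), but your packaging offloads the case analysis onto the well-known self-adjoint Fredholm machinery and is both shorter and less error-prone. The paper's version has the minor pedagogical advantage of being self-contained — it re-derives the dichotomy rather than citing it — which may explain the authors' choice. One small cosmetic note: the observation $M_r \ge 1/2$ is true but not needed; what drives the argument is the bound $M_r - r^2\lambda \ge c > 0$, which is exactly what \eqref{eq:cond_on_lambda} supplies.
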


\begin{proof}
 Let us first show that $\lambda$ is an eigenvalue, that is to say, that $\lambda$ belongs to the point spectrum. According to \eqref{eq:decomp}, it is enough to show that if $\lambda$ is in the approximate point spectrum, then it is in the point spectrum. If this is the case, then there exists a sequence $(f_n) \in L^2(X,\mu)$ such that $\|f_n\|_{L^2(X)} = 1$ and
    \begin{equation}
    \label{approximateSpectrum}
        \|-\tilde{\Delta}_rf_n - \lambda f_n\|_{L^2(X)}
    =
        \left\|
        \left(
            \frac{[\tilde{A}_r1]}{r^2}
        -
            \lambda
        \right)f_n
        -
        \frac{\tilde{A}_rf_n}{r^2}
        \right\|_{L^2(X)}
    \to
        0.
    \end{equation}
    Since $\tilde{A}_r$ is compact, we have that $\tilde{A}_rf_n$ converges up to a subsequence, and as such by equation \eqref{approximateSpectrum} so does $\left(
            [\tilde{A}_r1]
        -
            r^2\lambda
        \right)f_n$, with limit $g \in L^2(X)$. Consider $\delta>0$ such that $0 \leq \lambda + \delta/r^2 < \inf_{y \in X}[\tilde{A}_r1](y)/{r^2}$ and define
        \begin{equation}
            b_r(x) := [\tilde{A}_r1]-r^2\lambda \geq \delta.
        \end{equation}
        Thus we have that $f:=g/b_r \in L^2(X,\mu)$, and so
        \begin{equation}
            \delta \|f_n - \frac{g}{b_r}\|_{L^2(X)}
        \leq
            \|b_r f_n - g \|_{L^2(X)}
        \rightarrow 0.
        \end{equation}
        Thus $f_n$ converges in $L^2(X,\mu)$ to the limit function $f$. Using continuity of $-\tilde{\Delta}_r$ we conclude that
        \begin{equation}
            -\tilde{\Delta}_r f
        =
            \lambda f,
        \end{equation}
        and thus $\lambda$ is in the point spectrum.

        To prove that $\lambda$ is an isolated point, we suppose by contradiction that there exists an infinite sequence $(\lambda_n) \subset \sigma(-\tilde{\Delta}_r)$ of distinct values such that $\lambda_n \rightarrow \lambda$. Then there exists $\delta>0$ such that for any high enough $n$,
        \begin{equation}
            \lambda_n + \delta/r^2 < \inf_{y \in X}[\tilde{A}_r1](y)/{r^2}, 
            \quad \quad
            \lambda_n \rightarrow \lambda.
        \end{equation}
        From the previous paragraph, we know that $\lambda_n$ is in the point spectrum, thus there exists $f_n \in L^2(X, \mu)$ satisfying $\|f_n\|_{L^2(X)} = 1$, such that
        \begin{equation}
            -\tilde{\Delta}_r
            f_n
        =
            \lambda_n
            f_n.
        \end{equation}
        This can be written as 
        \begin{equation}
        \label{eqForDiscreteSpectrum}
            -\frac{\tilde{A}_r}{r^2}f_n
        =
            (\lambda_n-\frac{[\tilde{A}_r1]}{r^2})f_n.
        \end{equation}
        Using compactness of $\tilde{A}_r$, we know that $\tilde{A}_rf_n$ converges up to a subsequence. This implies that $ (r^2\lambda_n-[\tilde{A}_r1])f_n$ converges up to a subsequence to some $g \in L^2(X,\mu)$. Define
        \begin{equation}
            b_{r,n}(x)
        :=
            [\tilde{A}_r1]-r^2\lambda_n.
        \end{equation}
        With this we have that
        \begin{align}
            \delta\left\|f_n - \frac{g}{b_r}\right\|_{L^2(X)}
        &\leq
            \delta\left(
            \left\|f_n - \frac{g}{b_{r,n}}\right\|_{L^2(X)}
        +
            \left\|\frac{g}{b_{r,n}}-\frac{g}{b_{r}}\right\|_{L^2(X)}
            \right)\\
        &\leq
        \|b_{r,n}f_n - g\|_{L^2(X)}
        +
        \delta\left\|\frac{g}{b_{r,n}}-\frac{g}{b_{r}}\right\|_{L^2(X)}.
        \end{align}
        We have that $\|b_{r,n}f_n - g\|_{L^2(X)} \rightarrow 0$ and also $\|\frac{g}{b_{r,n}}-\frac{g}{b_{r}}\|_{L^2(X)} \rightarrow 0$ since $0< \delta \leq b_{r,n},b_r$ and $\lambda_n \rightarrow \lambda$. Thus $f_n$ converges. However since all the eigenvalues are different, we know that $\langle f_n, f_j \rangle = \delta_{n,j}$, and so the sequence cannot converge up to a subsequence, achieving contradiction. This shows that $\lambda$ is an isolated point of $\sigma(-\tilde{\Delta}_r)$ finishing the first part of the proof.

        Let us now prove that $-\tilde{\Delta}_r - \lambda I$ is a Fredholm operator.
        
        To show that $\ker(-\tilde{\Delta}_r - \lambda I)$ is finite dimensional we proceed by contradiction. Assume that there exists an infinite sequence $(f_n) \subset \ker(-\tilde{\Delta}_r - \lambda I)$ such that $\langle f_n, f_j \rangle = \delta_{n,j}$. Thus
        \begin{equation}
            -\frac{\tilde{A}_rf_n}{r^2}
        =
            \left(
                \lambda
            -
                \frac{[\tilde{A}_r 1]}{r^2}
            \right)f_n.
        \end{equation}
        Similar to before we can use compactness of $\tilde{A}_r$ and the condition on $\lambda$ to conclude that $f_n$ converges in $L^2(X, \mu)$ up to a subsequence. However, this is prevented by $\langle f_n, f_j \rangle = \delta_{n,j}$.

        To show that the image of $-\tilde{\Delta}_r - \lambda I$ is closed, consider a sequence $g_n:= (-\tilde{\Delta}_r - \lambda I)(f_n)$ such that $g_n \rightarrow g$. Similarly to before, we can conclude that since $g_n$ converges, then $f_n$ converges to some $f$, and thus $g = (-\tilde{\Delta}_r - \lambda I)(f)$.
\end{proof}

\begin{corollary}\label{cor:min}
Let $(X,\dist,\mu)$ be a metric measure space such that for some $r_0>0$ the assumption \eqref{eq:V-1_int} holds for any $r \in (0,r_0)$. Then
    \[
    \lim\limits_{r \downarrow 0} \big( \min \sigma_{\text{ess}}(-\tilde{\Delta}_r) \big) = +\infty.
    \]
\end{corollary}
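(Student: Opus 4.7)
The plan is to deduce the corollary directly from Proposition \ref{prop:eigenvalueProperties}, using only a trivial pointwise lower bound on $\tilde{A}_r 1$.

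First, I would translate the statement of Proposition \ref{prop:eigenvalueProperties} into a bound on the essential spectrum. Since any $\lambda \in \sigma(-\tilde{\Delta}_r)$ satisfying $\lambda < \inf_{y \in X}[\tilde{A}_r 1](y)/r^2$ is an isolated eigenvalue lying outside $\sigma_{\text{ess}}(-\tilde{\Delta}_r)$, the essential spectrum must be contained in $[\inf_{y \in X}[\tilde{A}_r 1](y)/r^2, +\infty)$, so
\[
\min \sigma_{\text{ess}}(-\tilde{\Delta}_r) \;\ge\; \frac{1}{r^2}\inf_{y \in X}[\tilde{A}_r 1](y),
\]
with the convention that the left-hand side is $+\infty$ if $\sigma_{\text{ess}}(-\tilde{\Delta}_r) = \emptyset$.

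Next, I would compute $\tilde{A}_r 1$ explicitly. From $A_r 1 \equiv 1$ and the definition \eqref{eq:symm} of $\tilde{A}_r$, we get
\[
\tilde{A}_r 1(y) \;=\; \frac{1}{2}\bigl( A_r 1(y) + A_r^* 1(y) \bigr) \;=\; \frac{1}{2}\bigl( 1 + A_r^* 1(y) \bigr) \;\ge\; \frac{1}{2}
\]
for every $y \in X$, since $A_r^* 1 \ge 0$. Hence $\inf_{y \in X}[\tilde{A}_r 1](y) \ge 1/2$.

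Combining the two previous displays gives
\[
\min \sigma_{\text{ess}}(-\tilde{\Delta}_r) \;\ge\; \frac{1}{2r^2},
\]
valid for every $r \in (0,r_0)$. Letting $r \downarrow 0$ yields the claim. There is no real obstacle here: the corollary is a direct consequence of Proposition \ref{prop:eigenvalueProperties} once one observes that the threshold $\inf_{y \in X}[\tilde{A}_r 1](y)$ is bounded below by the universal constant $1/2$, so the whole content lies in Proposition \ref{prop:eigenvalueProperties} itself.
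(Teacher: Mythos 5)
Your proof is correct and essentially identical to the paper's: both deduce from Proposition \ref{prop:eigenvalueProperties} that $\min\sigma_{\text{ess}}(-\tilde{\Delta}_r) \ge \inf_y[\tilde{A}_r 1](y)/r^2$, and both then bound $\tilde{A}_r 1 \ge \tfrac{1}{2}$ (you via $A_r^*1 \ge 0$, the paper via $\tilde{A}_r 1 \ge \tfrac{1}{2}A_r 1$, which is the same observation).
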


\begin{proof}
Proposition \ref{prop:eigenvalueProperties} implies that
    \begin{equation}\label{eq:essential_spectrum_bound}
        \inf_{y \in X}[\tilde{A}_r1](y)/{r^2} \le \min  \sigma_{\text{ess}}(-\tilde{\Delta}_r).
    \end{equation}
But for any $r>0$,
\[
\tilde{A}_r1 = \frac{1}{2} \left( A_r1 + A^*_r1\right) \ge \frac{1}{2} A_r1 = \frac{1}{2}
\]
hence \eqref{eq:essential_spectrum_bound} implies that
    \[
    \min \sigma_{\text{ess}}(-\tilde{\Delta}_r) \ge \frac{1}{2r^2} \stackrel{r \downarrow 0}{\to} +\infty.
    \]
\end{proof}

Let us provide our second spectral result on $\tilde{\Delta}_r$.

\begin{prop}\label{prop:kernelOfSymAMV}
 Let $(X,\dist,\mu)$ be a connected metric measure space satisfying \eqref{eq:V-1_int} for some fixed $r>0$. 
  Then the kernel of $\tilde{\Delta}_r$ contains constant functions only, and $\tilde{E}_r$ defines a scalar product on
     \begin{equation}
    \label{eq:perp}
      \mathrm{\Pi}(X,\mu) \df \left\{ f \in L^2(X,\mu) : \int_X f \di \mu = 0\right\}.
    \end{equation}
\end{prop}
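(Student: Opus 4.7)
The plan is threefold: (i) constants lie in $\ker \tilde{\Delta}_r$, which is immediate from $\tilde{\Delta}_r = r^{-2}(\tilde{A}_r - [\tilde{A}_r 1]\mathrm{I})$; (ii) any element of $\ker \tilde{\Delta}_r$ is a.e.\ constant; and (iii) these combine to give positive definiteness of $\tilde{E}_r$ on the mean-zero subspace.

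For step (ii), observe that \eqref{eq:IBP} together with Remark \ref{rem:non-neg} make $\tilde{E}_r$ a symmetric positive semidefinite bilinear form on $L^2(X,\mu)$, so Cauchy--Schwarz yields $\ker \tilde{\Delta}_r = \{f \in L^2(X,\mu) : \tilde{E}_r(f) = 0\}$. Take $f$ with $\tilde{E}_r(f) = 0$. Since the weight $\tilde{a}_r$ is strictly positive on $\{(x,y) : d(x,y)<r\}$, the integrand of $\tilde{E}_r(f)$ vanishes a.e., so $f(x) = f(y)$ for $(\mu \otimes \mu)$-a.e.\ $(x,y)$ with $d(x,y)<r$. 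By Fubini there is a $\mu$-full-measure set $G \subset X$ such that $f \equiv f(x)$ $\mu$-a.e.\ on $B_r(x)$ for each $x \in G$. Fix $x_0 \in G$ and set $c := f(x_0)$; introduce the disjoint open sets
\[
W := \{x \in X : f = c\ \mu\text{-a.e.\ on some } B_\delta(x),\ \delta>0\}, \qquad W' := \{x \in X : f \neq c\ \mu\text{-a.e.\ on some } B_\delta(x)\}.
\]
The defining property of $G$ ensures $G \subset W \cup W'$, so $W \cup W'$ has full $\mu$-measure and $x_0 \in W$.

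The crucial step is upgrading this to $W \cup W' = X$ pointwise. If $z \in X \setminus (W \cup W')$, then both $\mu(B_{r/2}(z) \cap \{f = c\})$ and $\mu(B_{r/2}(z) \cap \{f \neq c\})$ are positive; picking $x_1 \in B_{r/2}(z) \cap G$ with $f(x_1) = c$, the triangle inequality gives $B_{r/2}(z) \subset B_r(x_1)$, whence
\[
\mu(B_r(x_1) \cap \{f \neq c\}) \geq \mu(B_{r/2}(z) \cap \{f \neq c\}) > 0,
\]
contradicting $f = c$ a.e.\ on $B_r(x_1)$. Thus $X = W \sqcup W'$, and connectedness of $X$ forces $W' = \emptyset$. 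A countable subcover of $\{B_{\delta(x)}(x)\}_{x \in X}$ extracted via second countability (established in Lemma \ref{lem:separable}) then yields $f = c$ $\mu$-a.e.\ on $X$.

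For step (iii), given $f \in \mathrm{\Pi}(X,\mu)$ with $\tilde{E}_r(f) = 0$, step (ii) implies $f$ is a.e.\ constant, and the mean-zero condition forces this constant to be zero; combined with the positive semidefiniteness and symmetry of $\tilde{E}_r$, this shows $\tilde{E}_r$ is a scalar product on $\mathrm{\Pi}(X,\mu)$. The main obstacle is the passage from "$W \cup W'$ of full $\mu$-measure" to "$W \cup W' = X$": in a connected space an open set of full $\mu$-measure need not equal the whole space, but here the fixed scale $r$ of $\tilde{E}_r$ combined with full support of $\mu$ supplies the propagation needed to close this gap.
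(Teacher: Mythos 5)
Your argument is correct and follows essentially the same route as the paper: deduce from $\tilde{E}_r(f)=0$ that $f$ is locally a.e.\ constant at scale $r$, partition $X$ into two open sets according to whether $f=c$ or $f\neq c$ nearby, show they cover $X$ using that $\mu$ is fully supported, and conclude by connectedness. The minor differences (you define disjoint $W,W'$ via arbitrary small balls and finish with a Lindel\"of countable-subcover step, whereas the paper works with unions of $r$-balls over $I$ and $I'$, handles the possible overlap of $W$ and $V$ directly, and reads off $f\equiv c$ a.e.\ from $I=A$) are cosmetic.
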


\begin{proof}
Consider $f \in L^2(X,\mu) \backslash \{0\}$ such that $\tilde{\Delta}_r f = 0$. Then $\tilde{E}_r(f) = 0$. This implies that for $\mu$-a.e. $x \in X$,
    \begin{equation}
        \int_X
        1_{B_r(x)}(y)
        \left(
            \frac{1}{V(x,r)}
            +
            \frac{1}{V(y,r)}
        \right)
        \left(
            \frac{f(x)-f(y)}{r}
        \right)^2
        d\mu(y)
    =
        0
    \end{equation}
    which implies, in turn,
    \begin{equation}
    \label{ConstantInBalls}
        \mu
        \left(
            \{
                y \in B_r(x):
                f(y)
                =
                f(x)
            \}
        \right)
    =
        \mu(B_r(x)).
    \end{equation} 
    Consider $F \df \{x \in X :  \text{$f(x)$ is a well-defined real number}\}$ and 
    \begin{equation}
        A
    :=
        \{
            x \in F: 
            \mu
        \left(
            \{
                y \in B_r(x):
                f(y)
                =
                f(x)
            \}
        \right)
    =
        \mu(B_r(x))
        \}.
    \end{equation}
    Then
    \begin{equation}
        \mu(X \backslash A) = 0.
    \end{equation}
    Take $z \in A$ and let $c = f(z)$. Consider
    \begin{equation}
        I = \{x \in A : f(x) = c\}, \quad\quad I' = \{x \in A : f(x) \neq c\},
    \end{equation}
    and notice that $I \cup I' = A$. Suppose by contradiction that $I' \neq \emptyset$.
    Set
    \begin{equation}
        W := \bigcup_{x \in I} B_r(x), \quad \quad
        V := \bigcup_{y \in I'} B_r(y).
    \end{equation}
    Since $V$ and $W$ are open sets whose union contains $A$ which has full measure in $X$, we must have
    \begin{equation}
        W \cup V = X,
    \end{equation}
    otherwise $X \backslash A$ would contain an open ball with positive measure. Since $W$ and $V$ form an open cover of $X$, and $X$ is connected, if both $V$ and $W$ are different from the empty set, then there exist $x \in I$ and $y \in I'$ such that
    \begin{equation}
        B_r(x) \cap B_r(y) \neq \emptyset.
    \end{equation}
    However
    \begin{align}
        f|_{B_r(x) \cap B_r(y)}(w) &= f(x) \quad\quad
        \mu\text{-a.e. } w \in X,\\
        f|_{B_r(x) \cap B_r(y)}(w) &= f(y) \quad\quad
        \mu\text{-a.e. } w \in X.
    \end{align}
    This is not possible since $f(x) \neq f(y)$ and $\mu(B_r(x) \cap B_r(y)) > 0$. This implies that $I = A$ and that $f(w) = c$ for $\mu$-a.e. $w \in X$.
    Then $-\tilde{\Delta}_r$ has a non-trivial kernel consisting of the constant functions only. Moreover, since $-\tilde{\Delta}_r$ is non-negative (Remark \ref{rem:non-neg}), we get the desired property on \eqref{eq:perp}.
\end{proof}

We are now in a position to prove Theorem \ref{th:1}. We recall that the context of this statement is a compact uniformly locally doubling metric measure space $(X,\dist,\mu)$. The compactness of the space ensures that \eqref{eq:V-1_int} holds for any $r>0$, see Lemma \ref{lem:compact_space}.

\begin{proof}
For $k \ge 1$ integer,  let $x_0,\ldots,x_k \in X$ be distinct points. Set
\[
\bar{r}_k \df\min_{0\le i \neq j \le k} \frac{\dist(x_i,x_j)}{4} \, \cdot
\]
For any $i \in \{0,\ldots,k\}$ and $y \in X$, define
\[
 \tilde{f}_i (y) \df \left( 1 - \frac{\dist(x_i,y)}{\bar{r}_k }\right)^+ \qquad \text{and} \qquad f_i(y) \df \frac{\tilde{f}_i(y)}{\|\tilde{f}_i\|_{2}}\, \cdot
\]
Note that each $f_i$ is an $L^2$-normalized Lipschitz function supported in $B_{\bar{r}_k}(x_i)$, and that $(f_0,\ldots,f_k)$ is an orthonormal family of $L^2(X,\mu)$. Set
\[
V \df \mathrm{Span}(f_0,\ldots,f_k) \in \mathcal{G}_{k+1}(L^2(X,\mu))
\]
and observe that for any $r>0$,
\[
\tilde{\lambda}_{k,r} \le \max_{\substack{f \in V\\\|f\|_2 = 1}} \tilde{E}_r(f).
\]
Consider $r \in (0,\bar{r}_k)$ and $i \neq j$ in $\{0,\ldots,k\}$. If $x \in B_{2 \bar{r}_k}(x_i)$, then $f_j(y)=0$ for any $y \in B_r(x)$, while if $x \notin B_{2 \bar{r}_k}(x_i)$, then $f_i(y)=0$ for any $y \in B_r(x)$. In both cases,
\[
(f_i(x)-f_i(y))(f_j(x)-f_j(y)) = 0
\]
for any $y \in B_r(x)$. Thus
\begin{equation}\label{eq:null}
\tilde{E}_r(f_i,f_j) = 0.
\end{equation}
Consider $f \in V$ such that $\|f\|_2=1$. Then $f=\sum_{i=0}^k a_i f_i$ for some $a_0,\ldots, a_k \in \mathbb{R}$ such that $\sum_{i=0}^k a_i^2 = 1$. By \eqref{eq:null}, we get
\[
\tilde{E}_r(f)  = \sum_{i=0}^k a_i^2 \tilde{E}_r(f_i) \le \max_{0 \le i \le k} \tilde{E}_r(f_i).  
\]
Let $r_0,C$ be the parameters of the uniform local doubling property of $(X,\dist,\mu)$, see Definition \ref{def:loc_doubling}. Then for any $i \in \{0,\ldots,k\}$ and $r \in (0, r_0)$,
\begin{align*}
\tilde{E}_r(f_i) & \le \frac{\Lip^{ 2 }(f_i)}{4} \int_X \fint_{B_r(x)} \underbrace{\left( 1 + \frac{V(x,r)}{V(y,r)} \right)}_{\le 1+ C^2} \underbrace{\frac{\dist^2(x,y)}{r^2} }_{\le 1} \di \mu(y)\di \mu(x)\\
& \le \frac{\Lip^{{ 2} }(f_i)(1+C^2)\mu(X)}{4} \, \cdot
\end{align*}
Therefore, for any $r < \tilde{r}_k \df \min(\bar{r}_k,R)$, we get
\begin{equation}
\label{eq:boundOfEigenvalues}
\tilde{\lambda}_{k,r} \le \tilde{C} \df \frac{(1+C^2)\mu(X)}{4} \max_{0 \le i \le k} \Lip^{{ 2} }(f_i)  \, \cdot 
\end{equation}
By Corollary \ref{cor:min},  there exists $r_k \in (0,\tilde{r}_k)$ such that $\tilde{C}< \min \sigma_{\mathrm{ess}}(-\tilde{\Delta}_r)$ for any $r \in (0,r_k)$. Then Remark \ref{rem:Courant_etc} implies that for such an $r$ the operator $-\tilde{\Delta}_r$ admits $k+1$ eigenvalues $ \lambda_0(-\tilde{\Delta}_r) \le \lambda_1(-\tilde{\Delta}_r) \le  \ldots \le \lambda_k(-\tilde{\Delta}_r)$ such that $\lambda_i(-\tilde{\Delta}_r)=\tilde{\lambda}_{i,r}$ for any $i \in \{0,\ldots,k\}$. That $\lambda_0(-\tilde{\Delta}_r)=0$ follows from Proposition \ref{prop:kernelOfSymAMV}. Moreover, by Remark \ref{rem:Courant_etc} and Proposition \ref{prop:kernelOfSymAMV}, we know that 
\[
\tilde{\lambda}_{1,r} = \min_{f \in \Pi(X,\mu)} \frac{\tilde{E}_r(f)}{\|f\|_2}
\]
where $\Pi(X,\mu)$ is as in \eqref{eq:perp}. Since $-\Delta_r$ has a kernel which is $L^2$-orthogonal to $\Pi(X,\mu)$ (Proposition \ref{prop:kernelOfSymAMV}), we have $\tilde{E}_r(f)>0$ for any $f \in \Pi(X,\mu)$, hence we get
\[
\tilde{\lambda}_{1,r}
    >
        0.
\]
\end{proof}

\section{First eigenvalue of torus and hypercubes}\label{sec:torus_et_al}

In this section, we derive some results which will be applied in Section \ref{sec:manifolds}. We let $m$ be a positive integer kept fixed throughout the section.

\subsection{A preliminary lemma} We begin with a result where we use the normalized sinc function, namely
\[
\sinc(\rho) \df \begin{cases}\displaystyle \frac{\sin(\pi \rho)}{\pi \rho} \qquad &  \rho \in \mathbb{R} \backslash \{0\}, \\
1 & \rho = 0,
\end{cases}
\]
and the following notation: for any $p = (p_1,...,p_m) \in \mathbb{Z}^m$,
\begin{equation}\label{eq:notation}
J(p)  \df \{i\in \{1,\ldots,m\}: p_i \neq 0\}, \qquad j(p) \df \# J(p).
\end{equation}

\begin{lemma}
\label{SinLemma}
    \begin{equation}
    \label{eq:coeffsOfRectangleProp}
        \liminf_{r \rightarrow 0}
        \inf_{0 \neq p \in \mathbb{Z}^m}
        \left|
        \frac{1}{r^2}
        \left(
            1-
            \prod_{i \in J(p)}
            \sinc(p_i r)
        \right)
        \right|
    >
        0.
    \end{equation}
\end{lemma}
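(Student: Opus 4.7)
The plan is to drop the absolute value and then prove a uniform lower bound $1 - \prod_{i \in J(p)} \sinc(p_i r) \geq c r^2$ valid for every $0 \neq p \in \bZ^m$ and every sufficiently small $r > 0$. Since $|\sinc(x)| = 1$ only at $x = 0$ and $p_i \neq 0$ for $i \in J(p)$, every factor has modulus strictly less than $1$, so the product lies in $(-1, 1)$ and $1 - \prod_{i \in J(p)} \sinc(p_i r) > 0$: the absolute value is superfluous. The argument then proceeds by splitting cases according to the largest $|p_i r|$ with $i \in J(p)$.

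First I would establish an auxiliary one-variable estimate: $\sinc(x) < 1$ strictly on $[-1, 1] \setminus \{0\}$ and $(1 - \sinc(x))/x^2 \to \pi^2/6$ as $x \to 0$, so the function $(1 - \sinc(x))/x^2$ extends continuously to $[-1, 1]$ with a positive minimum $c_0 > 0$. This yields the pointwise bound $1 - \sinc(x) \geq c_0 x^2$ for $|x| \leq 1$.

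Fix now $r > 0$ small and $0 \neq p \in \bZ^m$. In the case where $|p_i r| \leq 1$ for every $i \in J(p)$, each factor $\sinc(p_i r)$ belongs to $[0, 1]$ because $\sinc \geq 0$ on $[-1, 1]$; picking any $i_0 \in J(p)$ (for which $|p_{i_0}| \geq 1$), the remaining factors lie in $[0, 1]$ so $\prod_{i \in J(p)} \sinc(p_i r) \leq \sinc(p_{i_0} r)$, hence
$$1 - \prod_{i \in J(p)} \sinc(p_i r) \geq 1 - \sinc(p_{i_0} r) \geq c_0 (p_{i_0} r)^2 \geq c_0 r^2.$$
In the complementary case where some $|p_{i_0} r| > 1$, the envelope bound $|\sinc(x)| \leq 1/(\pi|x|)$ for $x \neq 0$ gives $|\sinc(p_{i_0} r)| \leq 1/\pi$, and since all remaining factors have modulus at most $1$, the whole product has modulus at most $1/\pi$, so $1 - \prod_{i \in J(p)} \sinc(p_i r) \geq 1 - 1/\pi$. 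Choosing $r_0 > 0$ with $c_0 r_0^2 \leq 1 - 1/\pi$, both cases produce $1 - \prod \geq c_0 r^2$ for all $r \in (0, r_0)$, which after dividing by $r^2$ and taking the liminf gives the desired positive lower bound $c_0$.

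The main subtlety I expect is the oscillatory, sign-changing behavior of $\sinc$, which precludes any direct monotonicity argument for the product. The dichotomy above is tailored to bypass this: it restricts the Taylor-type bound to the interval $[-1, 1]$ where $\sinc$ is non-negative (so extra factors can only decrease the product toward $0$, never push it back up toward $1$), while in the complementary regime the $1/(\pi|x|)$ envelope supplies, for free, a gap of at least $1 - 1/\pi$ between the product and $1$.
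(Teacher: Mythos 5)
Your proof is correct, and it takes a genuinely different (and in my view cleaner) route than the paper's. The paper argues by contradiction: if the statement failed there would be sequences $r_n \to 0$ and $p^{(n)} \neq 0$ along which the expression vanishes, and it then splits into the case where some scaled coordinate $p^{(n)}_j r_n$ stays bounded away from zero (using that $\sinc$ is bounded away from $1$ away from the origin) and the case where all scaled coordinates tend to zero (using a Taylor expansion of the full product $G(y) = 1 - \prod_j \sinc(y_j)$). Your version is a direct uniform bound $1 - \prod_{i \in J(p)} \sinc(p_i r) \ge c_0 r^2$ obtained via the dichotomy ``all $|p_i r| \le 1$'' versus ``some $|p_{i_0} r| > 1$.'' In the small regime you exploit the non-negativity of $\sinc$ on $[-1,1]$ to drop all factors but one and apply a one-variable lower bound $1 - \sinc(x) \ge c_0 x^2$ on $[-1,1]$; in the large regime the envelope $|\sinc| \le 1/(\pi|x|)$ gives a gap $\ge 1 - 1/\pi$. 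This buys you explicit constants ($c_0 = \min_{|x|\le 1}(1-\sinc(x))/x^2$, and $r_0 = \sqrt{(1-1/\pi)/c_0}$), avoids the compactness/subsequence machinery, and sidesteps some loose spots in the paper's argument (for instance the paper writes the Taylor leading term of $G$ as $\tfrac12|y|^2$ rather than $\tfrac{\pi^2}{6}|y|^2$, and its first-case lower bound ``$> |1-\alpha|$'' is stated imprecisely, though neither affects the conclusion). The two proofs share the same conceptual dichotomy between a small-argument Taylor regime and a large-argument decay regime; yours just implements it directly rather than by contradiction.
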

\begin{proof}
    We start by pointing out that for any $r > 0$ and $p \in \mathbb{Z}^m \backslash \{0\}$,
    \begin{equation*}
            \prod_{i \in J(p)}
            \sinc(p_i r)
        \neq
        1,
    \end{equation*}
    so that
    \begin{equation}
    \label{eq:ineqForFixedRandK}
        \left|
        \frac{1}{r^2}
        \left(
            1-
            \prod_{i \in J(p)}
            \sinc(p_i r)
        \right)
        \right|
    >
        0.
    \end{equation}
    Moreover, for any $r>0$, if  $|p|_\infty \df \max_{1 \le i \le m} |p_i| \to +\infty$, then
    \begin{equation*}
        \left|
        \frac{1}{r^2}
        \left(
            1-
            \prod_{i \in J(p)}
            \sinc(p_i r)
        \right)
        \right|
        \rightarrow
        \frac{1}{r^2}
        >
        0,
    \end{equation*}
    hence there exists $R>0$ such that 
    \begin{equation*}
        \inf_{0 \neq p \in \mathbb{Z}^m}
        \left|
        \frac{1}{r^2}
        \left(
            1-
            \prod_{i \in J(p)}
            \sinc(p_i r)
        \right)
        \right| =  \min_{\substack{0 \neq p \in \mathbb{Z}^m\\ |p|_\infty<R}}
        \left|
        \frac{1}{r^2}
        \left(
            1-
            \prod_{i \in J(p)}
            \sinc(p_i r)
        \right)
        \right|
    >
        0.
    \end{equation*}
    If \eqref{eq:coeffsOfRectangleProp} were to fail, due to the previous line, there would exist sequences $(r_n) \subset (0,+\infty)$ and $(p^{(n)}) \subset \mathbb{Z}^m \backslash \{0\}$ such that $r_n \to 0$ and
    \begin{equation}
    \label{eq:contradictHypSin}
        \lim_n\left|
        \frac{1}{r_n^2}
        \left(
            1-
            \prod_{i \in J(p)}
            \sinc(p^{(n)}_i r)
        \right)
        \right|
    =
        0.
    \end{equation}
    We have two cases.
    \begin{itemize}
        \item There exists $\alpha >0$ and $j \in \{1,\ldots,m\}$ such that $\liminf_n p^{(n)}_j r_n > \alpha$.
        \item For any $j \in \{1,\ldots,m\}$, one has $\liminf_n p^{(n)}_j r_n = 0$.
    \end{itemize}
 If the first one were true, then we would have
    \begin{equation}
        \liminf_n
        \left|
            1-\prod_{i \in J(p)}
            \sinc(p^{(n)}_i r)
        \right|
    >
        |
            1-\alpha
        |,
    \end{equation}
    and so \eqref{eq:contradictHypSin} couldn't hold.
    On the contrary, if the second case were true, up to extracting a subsequence we would have $p^{(n)}_j r_n \to 0$ for any $j \in \{1,\ldots,m\}$. For any $y = (y_1,\ldots,y_m) \in \bR^m$, set
        \begin{equation}
            G(y)
        \df
        1
        -
            \prod_{j = 1}^{m}
            \sinc(y_j).  
        \end{equation}
        Then $G$ is smooth on $\bR^m$ and satisfies
        \begin{equation}
            G(y)
        =
           \frac{1}{2} |y|^2
        +
            o(|y|^3), \qquad |y| \to 0.
        \end{equation}
        As a consequence, for $y \in \bR^m$ such that $|y|$ is small enough,
        \begin{equation}
            \left|
                G(y)
            \right|
        \geq
            \frac{1}{4} |y|^2.
        \end{equation}
        Then, for large enough $n$,

        \begin{align*}
            \left| 1- \prod_{i \in I(p^{(n)})} \sinc(p^{(n)}_i r_n)\right| & = G(r_n p^{(n)}) \geq \frac{1}{4} |r_n p^{(n)}|^2 \geq \frac{1}{4}r_n^2,
        \end{align*}
        because $p^{(n)} \neq 0$ implies that there exists at least one $i$ such that $|p^{(n)}_i|\geq 1$.
        Thus we obtain
        \begin{equation}
            \lim_n\left|
        \frac{1}{r_n^2}
        \left(
            1-
            \prod_{i \in J(p)}
            \sinc(\pi p^{(n)}_i r_n)
        \right)
        \right|
        \geq
        \frac{1}{4}
        \end{equation}
          and so \eqref{eq:contradictHypSin} could not hold. This concludes the proof.
\end{proof}

\subsection{Torus} Consider the torus
\begin{equation*}\label{eq:TorusDef}
    \mathbb{T}^m
:=
    \mathbb{R}^m/\big( -1 + 2\mathbb{Z}\big)^m
\end{equation*}
with its natural quotient map $\pi: \mathbb{R}^m \to \mathbb{T}^m$. Let $\pi^{-1}$ be the inverse of the bijective map
\[
\pi : [-1,1)^m \to \mathbb{T}^m.
\]
Let $\dist_\infty$ be the distance in $\mathbb{R}^m$ associated with the infinity norm, given by
\begin{equation}
    \dist_\infty(x,y)
:=
    \max\{|x_i-y_i|:i \in \{1,...,m\}\}
\end{equation}
for any $x=(x_1,\ldots,x_m)$ and $y = (y_1,\ldots,y_m)$ in $\mathbb{R}^m$. With respect to this distance, the open ball of radius $r>0$ centered at $x \in \mathbb{R}^m$ is 
\begin{equation}
\label{CubicBalls}
    Q_r(x)
:=
    \prod_{i=1}^m (-r + x_i, x_i + r).
\end{equation}
For any  $x,y \in \mathbb{T}^m$, set
\begin{equation*}
    \tilde{\dist}_\infty(x,y)
=
    \inf_{z \in \pi^{-1}(x), w \in \pi^{-1}(y)}
    \dist_\infty(z,w).
\end{equation*}
Then $\tilde{\dist}_\infty$ defines a distance on $\mathbb{T}^m$, and we denote by $\tilde{Q}_r(x)$ the open ball of radius $r>0$ centered at $x \in \mathbb{T}^m$ with respect to this distance. We also introduce the probability measure
\[
\mathbb{L}^m \df \pi_{\#}\left(  \frac{\mathcal{L}^m}{2^m} \right)
\]
on $\mathbb{T}^m$. Note that this is also the normalized Haar measure of $\mathbb{T}^m$ seen as a Lie group.

It is obvious that the metric measure space $\mathfrak{T}^m \df (\mathbb{T}^m, \tilde{\dist}_\infty, \mathbb{L}^m)$ satisfies the assumptions of Theorem \ref{th:1}, hence we know that for any small enough $r$,
$$\tilde{\lambda}_{1,r}=\lambda_{1}(- \tilde{\Delta}_{r})>0.$$
Then the following holds.

\begin{prop}\label{prop:torus}
    \begin{equation}
        \liminf_{r \rightarrow 0}
        \lambda_{1}(- \tilde{\Delta}_{r}) > 0.
    \end{equation}
\end{prop}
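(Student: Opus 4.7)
The plan is to diagonalize $-\tilde{\Delta}_r$ in the Fourier basis of the torus and then invoke Lemma \ref{SinLemma} directly to obtain the lower bound.

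The starting observation is that $\mathbb{L}^m$ is the Haar measure of $\mathbb{T}^m$ and $\tilde{\dist}_\infty$ is translation-invariant, so the function $x \mapsto V(x,r)$ is constant on $\mathbb{T}^m$. A direct computation using the fundamental domain $[-1,1)^m$ shows that $V(x,r) = r^m$ for any $r \in (0,1)$. In particular $a_r = a_r^*$, hence $\tilde{\Delta}_r = r^{-2}(A_r - \mathrm{I})$ acts as a genuine convolution operator on $\mathbb{T}^m$.

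Next I would compute the action of $-\tilde{\Delta}_r$ on the Fourier characters $\chi_p(x) \df e^{i\pi \langle p,x\rangle}$, $p \in \mathbb{Z}^m$, which form an orthonormal basis of the complex Hilbert space $L^2(\mathbb{T}^m, \mathbb{L}^m; \mathbb{C})$. Thanks to the product structure of $\tilde{Q}_r(x)$, the calculation factors into the one-variable integrals $\int_{-r}^r e^{i\pi p_j t}\, dt = 2r \sinc(p_j r)$, yielding
\[
A_r \chi_p = \Bigl(\prod_{j \in J(p)} \sinc(p_j r)\Bigr) \chi_p \qquad \text{and therefore} \qquad -\tilde{\Delta}_r \chi_p = \mu_{p,r}\, \chi_p,
\]
where $\mu_{p,r} \df r^{-2}\bigl(1 - \prod_{j \in J(p)} \sinc(p_j r)\bigr)$. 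Since $\{\chi_p\}_{p \in \mathbb{Z}^m}$ is a complete orthonormal family of eigenvectors of the bounded self-adjoint operator $-\tilde{\Delta}_r$, its point spectrum is exactly $\{\mu_{p,r} : p \in \mathbb{Z}^m\}$, and each $\mu_{p,r}$ is non-negative (by Remark \ref{rem:non-neg}, or directly from $|\sinc| \le 1$).

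Finally, $\mathfrak{T}^m$ is compact and uniformly locally doubling, so Theorem \ref{th:1} guarantees that for small enough $r$, $\lambda_1(-\tilde{\Delta}_r) = \tilde{\lambda}_{1,r}$ is a genuine eigenvalue of $-\tilde{\Delta}_r$. By Proposition \ref{prop:kernelOfSymAMV} the kernel of $\tilde{\Delta}_r$ is spanned by $\chi_0$, so
\[
\lambda_1(-\tilde{\Delta}_r) = \min_{0 \neq p \in \mathbb{Z}^m} \mu_{p,r} = \inf_{0 \neq p \in \mathbb{Z}^m} \left| \frac{1}{r^2}\Bigl(1 - \prod_{j \in J(p)} \sinc(p_j r)\Bigr)\right|,
\]
and Lemma \ref{SinLemma} yields exactly the desired positive $\liminf$. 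I do not foresee any substantial obstacle: the argument is a clean Fourier diagonalization, and the only slightly delicate verification is that $\{\chi_p\}$ captures the \emph{entire} spectrum of $-\tilde{\Delta}_r$, which follows automatically from the completeness of the Fourier basis.
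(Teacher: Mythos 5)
Your proposal is correct and rests on the same two pillars as the paper's proof: the observation that $\tilde{\Delta}_r$ is a convolution operator on the flat torus whose Fourier multiplier is $r^{-2}\bigl(1-\prod_{j\in J(p)}\sinc(p_j r)\bigr)$, and Lemma~\ref{SinLemma}. The only difference is one of framing: the paper argues by contradiction with a sequence of would-be first eigenfunctions $f_n$, computes their Fourier coefficients, and invokes Parseval, whereas you diagonalize $-\tilde{\Delta}_r$ once and for all on the basis $\{\chi_p\}$ and read off $\lambda_1(-\tilde{\Delta}_r)=\inf_{p\neq 0}\mu_{p,r}$ directly. Your route is a bit cleaner and avoids the detour through a contradiction, but it is not a substantially different argument --- and you correctly note the one point that deserves care, namely that the completeness of the Fourier basis (together with Proposition~\ref{prop:kernelOfSymAMV} identifying the kernel) is what guarantees the min-max value on $\Pi(\mathbb{T}^m,\mathbb{L}^m)$ equals $\inf_{p\neq 0}\mu_{p,r}$.
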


\begin{proof}
For any $x \in \mathbb{T}^m$ and $r>0$ small enough, the ball $\tilde{Q}_r(x) \subset \mathbb{T}^m$ is given by 
\begin{equation}
    \tilde{Q}_r(x)
=
    \pi(Q_r(\tilde{x}))
\end{equation}
where $\tilde{x}$ is any element in $\pi^{-1}(x)$, and $Q_r(\tilde{x})$ is as in \eqref{CubicBalls}. Then
\begin{equation}\label{eq:meas}
\mathbb{L}^m(\tilde{Q}_r(x))  = \frac{\mathcal{L}^m (Q_r(\tilde{x}))}{2^m} = r^m
\end{equation}
so that the $r$-energy functional of $\mathfrak{T}^m$ writes as
\begin{equation}\label{eq:energy}
    \tilde{E}_{r, \mathfrak{T}^m}
    (f)
=
    \frac{1}{r^m}
    \int_{\mathbb{T}^m}
    \int_{\tilde{Q}_r(x)}
        \left(
            \frac
            {f(x)-f(y)}
            {r}
        \right)^2
        \di\mathbb{L}^m(y)
    \di\mathbb{L}^m(x)
\end{equation}
for any $f \in L^2(\mathbb{T}^m,\mathbb{L}^m)$.

     We act by contradiction. Assume that there exist $r_n \rightarrow 0$ and $\{f_n\} \subset L^2(\mathbb{T}^m,\mathbb{L}^m)$ satisfying $\int_{\mathbb{T}^m} f_n \di \mathbb{L}^m = 0$ and $\|f_n\|_{L^2(\mathbb{T}^m)}=1$, such that
    \begin{equation}
    \label{eq:ContradictHypEigValue1}
        -\tilde{\Delta}_{r_n}f_n
    =
        \lambda_{1}(-\tilde{\Delta}_{r_n})f_n,
    \quad\quad
        \lambda_{1}(-\tilde{\Delta}_{r_n})
    \rightarrow
        0.
    \end{equation}
    With no loss of generality, we assume that each $r_n$ is small enough to ensure that $\tilde{E}_{r_n, \mathfrak{T}^m}$ writes as in \eqref{eq:energy}.
    
    From \eqref{eq:meas} we can write, for any $n$ and $\mathbb{L}^m$-a.e.~$x \in \mathbb{T}^m$,
    \begin{align}
        -\tilde{\Delta}_{r_n}f_n(x)
    &=
    \frac{1}{r_n^2}
    \left(
        f_n(x)
    -
        \frac{1}{r_n^m}
        \int_{\mathbb{T}^m}
            1_{\tilde{Q}_{r_n}(x)}(y)
            f_n(y)
            \di \mathbb{L}^m(y)
    \right) \nonumber \\
    &=
        \frac{1}{r_n^2}
        \left(
        f_n(x)
    -
        \frac{1}{r_n^m} \bigg(
            1_{\tilde{Q}_{r_n}(0)}
            *
            f_n  \bigg)
            (x)
        \right).
        \label{eq:convolutionLaplaceEq}
    \end{align}
    We consider the Fourier decomposition of $f_n$, $1_{\tilde{Q}_{r_n}(0)}$, and $-\tilde{\Delta}_{r_n}f_n$, namely
    \begin{equation*}
        f_n 
    =
        \sum_{p \in \mathbb{Z}^m}
        a_{p,n} e_p 
    \end{equation*}
        \begin{equation*}
        1_{\tilde{Q}_{r_n}(0)}
    =
        \sum_{p \in \mathbb{Z}^m}
        b_{p,n} e_p
    \end{equation*}
    \begin{equation*}
        -\tilde{\Delta}_{r_n}f_n 
    =
        \sum_{p \in \mathbb{Z}^m}
        c_{p,n} e_p
    \end{equation*}
    where $\{e_p\}_{p \in \mathbb{Z}^m}$ is the orthonormal basis of $L^2(\mathbb{T}^m,\mathbb{L}^m)$ given by  \[
    e_p : \mathbb{T}^m \ni x \mapsto e^{i \pi p\cdot \pi^{-1}(x)} \qquad \forall \, p \in \mathbb{Z}^m.
    \]
    Since the Fourier coefficients of a convolution are the product of the coefficients, we obtain from \eqref{eq:convolutionLaplaceEq} that
    \begin{equation}
        c_{p,n}
    =
        \frac{1}{r_n^2}
        \left(
            a_{p,n}
        -
            \frac{b_{p,n}}{r_n^m}
            a_{p,n}
        \right)
    =
        \frac{a_{p,n}}{r_n^2}
        \left(
            1
        -
            \frac{b_{k,n}}{r_n^m}
        \right).
    \end{equation}
    We can compute each coefficient $b_{p,n}$ by means of Fubini's Theorem; we obtain
    \begin{equation}
        b_{p,n}
    =
        \int_{Q_{r_n}(0)}
        e^{i\pi p \cdot x} \frac{d\mathcal{L}^m(x)}{2^m}
    =
        r_n^{m}\prod_{i \in J(p)} \sinc(p_i r_n).
    \end{equation}
    Thus
    \begin{equation}
        c_{p,n}
    =
        \frac{a_{p,n}}{r_n^2}
        \left(
            1
        -
            \prod_{i \in J(p)}
        \sinc(p_i r_n)
        \right).
    \end{equation}
    Using Lemma \ref{SinLemma}, for $p \in \mathbb{Z} \backslash \{0\}$ we conclude that there exists $\alpha > 0$ such that
    \begin{equation}
    \frac{1}{r_n^2}
        \left|
            1
        -
            \prod_{i \in J(p)}
        \sinc(p_i r_n)
        \right|
    \geq
        \alpha.
    \end{equation}
    This implies that
    \begin{equation}
        |c_{p,n}|^2
    \geq
        |a_{p,n}|^2\alpha^2.
    \end{equation}
    By Parseval's identity, and since $f_n \in \Pi(\mathbb{T}^m, \mathbb{L}^m)$ we have $a_{0,n} = 0$,
    \begin{equation}
        \|-\tilde{\Delta}_{r_n}f_n\|_{L^2(\mathbb{T}^m)}^2
    =
        \sum_{p \in \mathbb{Z}^m}
        |c_{p,n}|^2
    \geq
        \sum_{p \in \mathbb{Z}^m\backslash\{0\}}
        |a_{p,n}|^2\alpha^2
    =
        \alpha^2
        \|f_n\|_{L^2(\mathbb{T}^m)}^2,
    \end{equation}
    in contradiction with $\|-\tilde{\Delta}_{r_n}f_n\|_{L^2(\mathbb{T}^m)} \rightarrow 0$ provided by \eqref{eq:ContradictHypEigValue1}.
\end{proof}

\subsection{ Shrinking hypercubes} For any $b>0$, consider the metric measure space $\mathfrak{Q}^m(b)\df([0,b]^m,\dist_\infty,\mathcal{L}^m)$. It trivially  satisfies the assumptions of Theorem \ref{th:1}, so that for any small enough $r$,
\begin{equation}\label{eq:101}
\tilde{\lambda}_{1,r}=\lambda_{1}(- \tilde{\Delta}_{r,\mathfrak{Q}^m(b)})>0.
\end{equation}
Then the following holds.

\begin{lemma}
\label{lem:firstEigenvalueOfCubes}
    \begin{equation}
    \label{firstEigenvalueExplodes}
        \liminf_{b \rightarrow 0}
        \lim_{r \to 0}
            \lambda_{1}(-\tilde{\Delta}_{r,\mathfrak{Q}^m(b)})
        = +\infty.
    \end{equation}
\end{lemma}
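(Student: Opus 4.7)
The plan is to first rescale to $b=1$. For $f : [0,b]^m \to \bR$, setting $\tilde f(z) \df f(bz)$ on $[0,1]^m$, the change of variables $y = bz$ combined with the dilation-invariance of the ratios $V(x,r)/V(y,r)$ in the definition of $\tilde\Delta_r$ yields
\[
\tilde{\Delta}_{r, \mathfrak{Q}^m(b)} f(x) = b^{-2}\, \tilde{\Delta}_{r/b, \mathfrak{Q}^m(1)} \tilde f (x/b),
\]
hence $\lambda_1(-\tilde{\Delta}_{r, \mathfrak{Q}^m(b)}) = b^{-2}\, \lambda_1(-\tilde{\Delta}_{r/b, \mathfrak{Q}^m(1)})$. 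The claim thus reduces to proving $\liminf_{s \downarrow 0} \lambda_1(-\tilde{\Delta}_{s, \mathfrak{Q}^m(1)}) > 0$.

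\textbf{Comparison with the torus.} To establish this positive lower bound, I would compare with $\mathfrak{T}^m$ via even reflection. Identifying $\bT^m$ with $[-1, 1)^m/\!\sim$, to any $f \in \Pi([0,1]^m, \cL^m)$ associate its even reflection $F : \bT^m \to \bR$ given by $F \circ \pi(x) \df f(|x_1|, \dots, |x_m|)$: then $F \in \Pi(\bT^m, \bL^m)$ and $\|F\|_{L^2(\bT^m, \bL^m)} = \|f\|_{L^2([0,1]^m)}$. The crux is the energy bound
\[
\tilde E_{r, \mathfrak{T}^m}(F) \le 2^m\, \tilde E_{r, \mathfrak{Q}^m(1)}(f).
\]
I would prove it by first using the $\{\pm 1\}^m$-symmetry of $F$ to restrict the outer integration in $\tilde E_{r, \mathfrak{T}^m}(F)$, unfolded to $[-1,1)^m$, to $x \in [0,1]^m$, and then applying the coordinate-wise folding map $\rho : Q_r(x) \to Q_r(x) \cap [0,1]^m$ which reflects each coordinate lying outside $[0,1]$ back inside. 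This map is at most $2^m$-to-$1$ and satisfies $F \circ \pi = f \circ \rho$ on $Q_r(x)$, whence
\[
\int_{Q_r(x)} (F \circ \pi(y) - f(x))^2 \di\cL^m(y) \le 2^m \int_{Q_r(x) \cap [0,1]^m} (f(y) - f(x))^2 \di\cL^m(y).
\]
Combined with the universal bound $V(x,r) \le (2r)^m$ on $\mathfrak{Q}^m(1)$, which controls the factor $1/V(x,r) + 1/V(y,r)$ in $\tilde E_r$ from below, this delivers the desired comparison.

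\textbf{Conclusion and main difficulty.} The min-max characterization of $\lambda_1$ on $\Pi$ (Proposition \ref{prop:kernelOfSymAMV} together with Remark \ref{rem:Courant_etc}) then gives $\lambda_1(-\tilde{\Delta}_{r, \mathfrak{Q}^m(1)}) \ge 2^{-m}\, \lambda_1(-\tilde{\Delta}_{r, \mathfrak{T}^m})$, so Proposition \ref{prop:torus} yields $\liminf_{r \downarrow 0} \lambda_1(-\tilde{\Delta}_{r, \mathfrak{Q}^m(1)}) > 0$. Combined with the scaling step,
\[
\liminf_{b \downarrow 0} \lim_{r \downarrow 0} \lambda_1(-\tilde{\Delta}_{r, \mathfrak{Q}^m(b)}) = \liminf_{b \downarrow 0} b^{-2} \lim_{s \downarrow 0} \lambda_1(-\tilde{\Delta}_{s, \mathfrak{Q}^m(1)}) = +\infty,
\]
as desired. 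The main technical difficulty should lie in the folding estimate: verifying $\rho(y) \in Q_r(x) \cap [0,1]^m$ for every $y \in Q_r(x)$ requires a per-coordinate case analysis (on whether $y_i < 0$, $y_i \in [0,1]$, or $y_i > 1$), and the full multiplicity $2^m$ is attained only when $x$ is near a corner, so the bookkeeping must be carried out uniformly in $x$ and $r$.
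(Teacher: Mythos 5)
Your proposal takes essentially the same route as the paper's proof: scale to the unit cube, reflect evenly to $\mathfrak{T}^m$, establish the energy comparison $\tilde E_{r,\mathfrak{T}^m}(F) \le 2^m \tilde E_{r,\mathfrak{Q}^m(1)}(f)$ via the coordinate-wise folding (the paper carries this out through the orthant maps $N_j$ and the rectangle inclusion $R_{n,j}(x) \subset R_{n,k}(x) = [0,1]^m \cap Q_r(T(x))$), and invoke Proposition \ref{prop:torus}. The only difference is that the paper phrases the argument by contradiction, extracting sequences $b_n, r_n \to 0$ and approximate first eigenfunctions, while you argue directly; the substance and the main technical burden you identify — the per-coordinate bookkeeping for the folding and the volume bound $V(x,r)\le(2r)^m$ — are identical.
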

\begin{proof}
    We suppose by contradiction that \eqref{firstEigenvalueExplodes} fails. Then there exist $b_n \rightarrow 0$ and $r_n \rightarrow 0$ such that
    \begin{equation*}
        \lim_n
         \lambda_{1}(-\tilde{\Delta}_{r_n,\mathfrak{Q}^m(b)}) < +\infty.
    \end{equation*}
    Since we are first taking the limit in $r$ and then in $b$, we can assume that $\overline{r}_n := \frac{r_n}{b_n} \rightarrow 0$. For any $n$, by a simple scaling argument we have
    \begin{equation*}
         \lambda_{1}(-\tilde{\Delta}_{r_n,\mathfrak{Q}^m(b_n)})
    =
        \frac{1}{b_n^2}
        \lambda_{1}
        (-\tilde{\Delta}_{\overline{r}_n,\mathfrak{Q}^m(1)})
    \end{equation*}
thus
    \begin{equation}
    \label{eq:contradictLambda1To0}
        \lambda_{1}
        (-\tilde{\Delta}_{\overline{r}_n,\mathfrak{Q}^m(1)})
    \rightarrow
        0.
    \end{equation}
    From \eqref{eq:101}, assuming that each $r_n$ is small enough, we know that there exists $f_n \in \Pi([0,1]^m,\mathcal{L}^m)$ such that $\|f_n\|_{L^2([0,1]^m,\mathcal{L}^m)}=1$ and
    \begin{equation*}
        \tilde{E}_{\overline{r}_n,\mathfrak{Q}^m(1)}(f_n)
    =
         \lambda_{1}
        (-\tilde{\Delta}_{\overline{r}_n,\mathfrak{Q}^m(1)}).
    \end{equation*}
    Consider the continuous function
        $$\begin{array}{cccccc}
    T & : & \mathbb{T}^m & \rightarrow & [0,1]^m \\
      &   &  x           & \mapsto     &  (|\bar{x}_1|,...,|\bar{x}_m|)
    \end{array}$$
    where $\bar{x} =\pi^{-1}(x) \in [-1,1)^m$. For any $n$, set
    \[
    \tilde{f}_n = f_n \circ T \in L^2(\mathbb{T}^m,\mathbb{L}^m).
    \]
    From this we have that $f_n \in \Pi(\mathbb{T}^m, \mathbb{L}^m)$
    Let us prove that
    \[
    \|\tilde{f}_n\|_{L^2(\mathbb{T}^m,\mathbb{L}^m)} = 1.
    \]
    Let $C_1,\ldots,C_{2^m}$ denote the $2^m$ sets of the form $I_1 \times \ldots \times I_m$ where each $I_i$ is either $[-1,0]$ or $[0,1]$. For any $j \in \{1,\ldots,2^m\}$, set
    $$\begin{array}{cccccc}
    N_j & : & [0,1]^m & \rightarrow & C_j \\
      &   &  (\xi_1,\ldots,\xi_m)           & \mapsto     &  (\eps_i\xi_1,\ldots,\eps_m\xi_m)
    \end{array}$$
    where $\eps_i$ is $1$ if $I_i = [0,1]$ and $-1$ otherwise. Note that $N_j$ is an isometry which preserves the Lebesgue measure, and that $T \circ \pi \circ N_j$ is equal to the identity. Then
    \begin{align}\label{eq:computation}        \|\tilde{f}_n\|_{L^2(\mathbb{T}^m,\mathbb{L}^m)}^2 = \int_{\mathbb{T}^m} (f_n \circ T)^2 \di \mathbb{L}^m  & = \frac{1}{2^m}\int_{[-1,1]^m} (f_n \circ T\circ \pi)^2 \di \mathcal{L}^m \nonumber \\
        & = \frac{1}{2^m} \sum_{j=1}^{2^m}\int_{C_j} (f_n \circ T\circ \pi)^2 \di \mathcal{L}^m \nonumber \\
        & = \frac{1}{2^m} \sum_{j=1}^{2^m}\int_{[0,1]^m} (f_n \circ T\circ \pi \circ N_j)^2 \di \mathcal{L}^m  \nonumber \\
        & = \|f_n\|_{L^2([0,1]^m,\mathcal{L}^m)}^2 = 1.
     \end{align}
     
     We claim that
    \begin{equation}\label{eq:claim}
        \tilde{E}_{\overline{r}_n,\mathfrak{T}^m}(\tilde{f}_n) \le 2^m \tilde{E}_{\overline{r}_n,\mathfrak{Q}^m(1)}(f_n).
    \end{equation}
    Since $\tilde{f}_n \in \Pi(\mathbb{T}^m, \mathbb{L}^m)$, the latter provides a contradiction with Proposition \ref{prop:torus}, namely
    \[
    0 < \lambda_1(-\tilde{\Delta}_{\bar{r}_n,\mathfrak{T}^m}) \le  2^m\lambda_{1}
        (-\tilde{\Delta}_{\overline{r}_n,\mathfrak{Q}^m(1)}) \to 0.
    \]
Given $x \in \mathbb{T}^m$, define
    \[
    \tilde{G}_n(x) \df 
    \frac{1}{\overline{r}_n^m}
    \int_{\tilde{Q}_{\overline{r}_n}(x)}
                    \left(
                        \frac{\tilde f_n(x)-\tilde f_n(y)}{\overline{r}_n}
                    \right)^2
                     \di \mathbb{L}^m(y).
                     \]
For any $x \in [0,1]^m$, set
    \begin{align*}
    {G}_n(x) &\df \int_{\tilde{Q}_{\overline{r}_n}(x)}
                    \left(
                        \frac{1}{V(x,\overline{r}_n)}
                        +
                        \frac{1}{V(y, \overline{r}_n)}
                   \right)\\
            &\qquad\qquad\quad\times
                    \left(
                        \frac{ f_n(x)- f_n(y)}{\overline{r}_n}
                    \right)^2
                     \di \mathcal{L}^m(y)
    \end{align*}
        where
\begin{equation}
    V(z,r) \df \mathcal{L}^m(Q_r(z)\cap [0,1]^m)
\end{equation}
for any $z\in [0,1]^m$. Then
    \begin{align*}
        4 \tilde{E}_{\overline{r}_n,\mathfrak{T}^m}(\tilde{f}_n)
    &=
        \int_{\mathbb{T}^m}
                \tilde{G}_n(x)
            \di
            \mathbb{L}^m(x)\\
        4 \tilde{E}_{\overline{r}_n,\mathfrak{Q}^m(1)}(f_n)
    &=
        \int_{[0,1]^m}
                {G}_n(x)
            \di
            \mathcal{L}^m(x).
    \end{align*}
     Moreover,  for all $x \in \mathbb{T}^m$,
    \begin{equation}
    \label{eq:ineqBetweenTorVolAndCuveVol}
        \frac{1}{\overline{r}_n^m}
    \leq
        \frac{2^m}{\mathcal{L}^m(Q_{\overline{r}_n}(T(x)) \cap [0,1]^m)}
    =
        \frac{2^m}{V(T(x),\overline{r}_n)}.
    \end{equation}

        For any $x \in \mathbb{T}^m$, there exists some $k \in \{1,...,2^m\}$ and $\overline{x} \in C_k$ such that $\pi(\overline{x}) = x$. We will now consider for each $j \in \{1,...,2^m\}$ the rectangle given by
        \begin{equation}
            R_{n,j}(x)
        :=
        T(
            \pi(C_j)
            \cap
            \tilde{Q}_{\overline{r}_n}(x)
        )
        \subset 
        [0,1]^m.
        \end{equation}
        and we point out that
        \begin{equation}
        \label{eq:inclusionOfRectangles}
            R_{n,j}(x)
        \subset
            R_{n,k}(x)
        =
            [0,1]^m
            \cap 
            Q_{\overline{r}_n}(T(x))
        \end{equation}
        for any $j \in \{1,...,2^m\}$. This follows since for each $i \in \{1,...,m\}$ we have 4 possibilities
        \begin{enumerate}
            \item $|\overline{x}_i|<\overline{r}_n$
            \item $|1-\overline{x}_i|<\overline{r}_n$
            \item $|-1-\overline{x}_i|<\overline{r}_n$
            \item $\neg\left((1)\lor (2)\lor(3) \right)$.
        \end{enumerate}
        If $\pi_i:\mathbb{R}^m \rightarrow \mathbb{R}$ is the projection in the $i$-th coordinate, we conclude
        \begin{equation}
        \pi_i(R_{n,k}(x))
    =
        \begin{cases}
        [0,\overline{r}_n+|\overline{x}_i|]  & \text{ if }(1)\\
        [-\overline{r}_n+|\overline{x}_i|,1]  & \text{ if }(2)\\
        [-\overline{r}_n+|\overline{x}_i|,1] & \text{ if }(3)\\
          [-\overline{r}_n,\overline{r}_n]  & \text{ if }(4)
        \end{cases}
        \end{equation}
        \begin{equation}
        \pi_l(R_{n,k}(x))
    =
        \begin{cases}
        \pi_i(R_{n,l}(x)) = \pi_i(R_{n,k}(x))  & \text{ if } \pi_i(C_j) = \pi_i(C_k)\\
        [0,\overline{r}_n-|\overline{x}_i|]  & \text{ if }(1) \text{ and } \neg\left(\pi_i(C_j) = \pi_i(C_k)\right)\\
        [2-\overline{r}_n-|\overline{x}_i|,1]  & \text{ if }(2)\text{ and } \neg\left(\pi_i(C_j) = \pi_i(C_k)\right)\\
        [2-\overline{r}_n-|\overline{x}_i|,1] & \text{ if }(3)\text{ and } \neg\left(\pi_i(C_j) = \pi_i(C_k)\right)\\
          \emptyset  & \text{ if }(4)\text{ and } \neg\left(\pi_i(C_j) = \pi_i(C_k)\right).
        \end{cases}
        \end{equation}
        Thus we conclude that for all $i \in \{1,...,m\}$ and $l \in \{1,...,2^m\}$ we have $\pi_i(R_{n,l}(x)) \subset \pi_i(R_{n,k}(x))$, and since these sets are rectangles, we conclude equation \eqref{eq:inclusionOfRectangles}.

        From \eqref{eq:ineqBetweenTorVolAndCuveVol} we can deduce
        \begin{align}
            \tilde{G}_n(x)
    &=
        \frac{1}{\bar{r}_n^m}
        \int_{\tilde{Q}_{\overline{r}_n}(x)}
                    \left(
                        \frac{\tilde f_n(x)-\tilde f_n(y)}{\overline{r}_n}
                    \right)^2
                     \di \mathbb{L}^m(y)\\
    &\leq
        \int_{\tilde{Q}_{\overline{r}_n}(x)}
                    \left(
                        \frac{2^m}{V(T(x), \overline{r}_n)}
                        +
                        \frac{2^m}{V(T(y), \overline{r}_n)}
                   \right)\\
            &\quad\quad\times
                    \left(
                        \frac{ f_n(T(x))- f_n(T(y))}{\overline{r}_n}
                    \right)^2
                     \di \mathbb{L}^m(y)
                \\
        &=
            \sum_{j=1}^{2^m}
            \int_{\tilde{Q}_{\overline{r}_n}(x)\cap \pi(C_j)}
                    \left(
                        \frac{2^m}{V(T(x), \overline{r}_n)}
                        +
                        \frac{2^m}{V(T(y), \overline{r}_n)}
                   \right)\\
            &\quad\quad\times
                    \left(
                        \frac{ f_n(T(x))- f_n(T(y))}{\overline{r}_n}
                    \right)^2
                     \di \mathbb{L}^m(y)\\
        &=
            \frac{1}{2^m}
            \sum_{j=1}^{2^m}
            \int_{\pi^{-1}(\tilde{Q}_{\overline{r}_n}(x))\cap C_j}
                    \left(
                        \frac{2^m}{V(T(x),\overline{r}_n)}
                        +
                        \frac{2^m}{V(T(\pi(y)), \overline{r}_n)}
                   \right)\\
            &\quad\quad\times
                    \left(
                        \frac{ f_n(T(x))- f_n(T(\pi(y)))}{\overline{r}_n}
                    \right)^2
                     \di \mathcal{L}^m(y)
        \end{align}
        For each integral, change coordinates by $N_j$ to conclude
        \begin{align}
            \tilde{G}_n(x)
        &\leq
            \frac{1}{2^m}
            \sum_{j=1}^{2^m}
            \int_{N_j^{-1}(\pi^{-1}(\tilde{Q}_{\overline{r}_n}(x))\cap C_j)}
                    \left(
                        \frac{2^m}{V(T(x),\overline{r}_n)}
                        +
                        \frac{2^m}{V(T(\pi(N_j(y))), \overline{r}_n)}
                   \right)\times\\
            &\quad\quad\times
                    \left(
                        \frac{ f_n(T(x))- f_n(T(\pi(N_j(y))))}{\overline{r}_n}
                    \right)^2
                     \di \mathcal{L}^m(y).
        \end{align}
        We have that
        \begin{equation}
            N_j^{-1}(\pi^{-1}(\tilde{Q}_{\overline{r}_n}(x))\cap C_j)
        =
            R_{n,j}(x),
        \end{equation}
        so by equation \eqref{eq:inclusionOfRectangles} and the fact that $T\circ\pi\circ N_j = id$, we conclude
        \begin{align}
            \tilde{G}_n(x)
        &\leq
            \frac{1}{2^m}
            \sum_{j=1}^{2^m}
            \int_{R_{n,j}(x)}
                    \left(
                        \frac{2^m}{V(T(x),\overline{r}_n)}
                        +
                        \frac{2^m}{V(T(y),\overline{r}_n)}
                   \right)\times\\
            &\quad\quad\quad\quad\times
                    \left(
                        \frac{ f_n(T(x))- f_n(y)}{\overline{r}_n}
                    \right)^2
                     \di \mathcal{L}^m(y)\\
            & \leq \frac{1}{2^m}
            \sum_{j=1}^{2^m}
            \int_{R_{n,k}(x)}
                    \left(
                        \frac{2^m}{V(T(x),\overline{r}_n)}
                        +
                        \frac{2^m}{V(y,\overline{r}_n)}
                   \right)\times\\
            &\quad\quad\quad\quad\times
                    \left(
                        \frac{ f_n(T(x))- f_n(y)}{\overline{r}_n}
                    \right)^2
                     \di \mathcal{L}^m(y)\\
            & =   \int_{[0,1]^m \cap Q_{\overline{r}_n}(T(x))}
                    \left(
                        \frac{2^m}{V(T(x),\overline{r}_n)}
                        +
                        \frac{2^m}{V(y,\overline{r}_n)}
                   \right)\times\\
            &\quad\quad\quad\quad\times
                    \left(
                        \frac{ f_n(T(x))- f_n(y)}{\overline{r}_n}
                    \right)^2
                     \di \mathcal{L}^m(y)\\
            &=
                2^m
                G_n(T(x)).
        \end{align}
    Now we integrate both sides in $\mathbb{T}^m$ and change variables by $\pi$ and $N_j$ to conclude
    \begin{align}
        4\tilde{E}_{\overline{r}_n, \mathfrak{T}^m}(\tilde{f}_n)
    =
        \int_{\mathbb{T}^m}
            \tilde{G}_n(x)
            \di\mathbb{L}^m(x)
    &\leq
        2^m
        \int_{\mathbb{T}^m}
            G_n(T(x))
        \di\mathbb{L}^m(x)\\
    &=
        \int_{[-1,1)^m}
            G_n(T(\pi(x)))
        \di\mathcal{L}^m(x)\\
    &=
        \sum_{j=1}^{2^m}
        \int_{C_j}
            G_n(T(\pi(x)))\di\mathcal{L}^m(x)\\
    &=
        \sum_{j=1}^{2^m}
        \int_{[0,1]^m}
            G_n(T(\pi(N_j(x))))\di\mathcal{L}^m(x)\\
    &=
         \sum_{j=1}^{2^m}
        \int_{[0,1]^m}
            G_n(x)\di\mathcal{L}^m(x)\\
    &=
        2^m
        4\tilde{E}_{\overline{r}_n,[0,1]^m}(f_n) = 2^m4\lambda_1(-\tilde{\Delta}_{\overline{r}_n,\mathfrak{Q}^m(1)}).
    \end{align}
    With this we obtain \eqref{eq:claim}.
\end{proof}

\section{$L^2$ convergence}\label{sec:manifolds}

In this section, we prove Theorem \ref{th:2}.  Let $M$ be a smooth, compact, connected manifold of dimension $m \ge 2$. Assume that $M$ is endowed with a smooth Riemannian metric $g$ and let $\dist_g$ and $\vol_g$ be the associated Riemannian distance and volume measure on $M$.

In this smooth context, the function $V(\cdot,r)$ is obviously continuous for any $r>0$. Since $M$ is compact,  this implies that the metric measure space $(M,\dist_g,\vol_g)$ satisfies \eqref{eq:V-1_int} and \eqref{I}. Then Lemma \ref{lem:sAMV} applies and ensures that $\tilde{\Delta}_r$ is a bounded self-adjoint operator acting on $L^2(M,\vol_g)$. The compactness of $M$ also ensures that $(M,\dist_g,\vol_g)$ is locally Ahlfors regular: there exists a constant $C>1$ such that for any $x \in M$ and $r \in (0,\mathrm{diam}(M)]$,
    \begin{equation}\label{eq:ahlforsM}
     C^{-1}r^m \le V(x,r) \le Cr^m.
    \end{equation}
Note that this condition trivially implies a uniform local doubling property for $(M,\dist_g,\vol_g)$.

\subsection{Convergence in the sense of distributions}
Recall that $C_m$ is defined in \eqref{eq:C_m}.  For any $x \in M \backslash \partial M$, we let $\exp_x$ be the exponential map centered at $x$. We identify $T_x M$ with $\mathbb{R}^m$ and write $\mathbb{B}^m_{r}(v)$ for the Euclidean ball in $\mathbb{R}^m$ centered at $v$ with radius $r>0$.  Then there exists $\delta>0$ such that the restriction of $\exp_x$ to $\mathbb{B}^m_{\delta}(0)$ is a diffeomorphism onto its image; recall that the injectivity radius $\mathrm{i}_M(x)$ of $M$ at $x$ is the supremum of the set of such numbers $\delta$. We let $J_x$ be the Radon-Nikodym derivative of the measure $(\exp_x^{-1})_{\#} \vol_g$ with respect to the Lebesgue measure $\mathcal{L}^m$.  It is well-known that for any $\xi \in \mathbb{B}_{\mathrm{i}_M}^m(0)$,
\[
J_x(\xi) = 1 + \underline{O}_K(|\xi|^2)
\]
where for any $h>0$, the notation $\underline{O}_K(h)$ stands for a quantity independent on $x \in K$ whose absolute value divided by $h$ is bounded. Here $K$ is a compact subset of $M$. We write $\underline{O}$ instead of $\underline{O}_M$. Then the following holds.

\begin{prop}
\label{convergenceOfInnerProds}
    Consider $f,\psi \in C^2(M)$. Then
\begin{equation}\label{eq:conv_inn_prod}
        \lim_{r\rightarrow 0}
        \langle 
            -\tilde{\Delta}_r f,\psi 
        \rangle_2
    =
        C_m
        \int_M
        \langle
            \di f
            ,
            \di \psi
        \rangle_g \di \vol_g.
    \end{equation}
\end{prop}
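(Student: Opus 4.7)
The plan is to combine the integration-by-parts identity $\langle -\tilde{\Delta}_r f, \psi \rangle_{L^2} = \tilde{E}_r(f, \psi)$ from \eqref{eq:IBP} with the bilinear analogue of the Korevaar--Schoen form noted in the remark after the definition of $\tilde{E}_r$. A Fubini--Tonelli argument based on $1_{B_r(x)}(y) = 1_{B_r(y)}(x)$ yields
\[
    \tilde{E}_r(f,\psi)
=
    \frac{1}{2} \int_M \fint_{B_r(x)} \frac{(f(x)-f(y))(\psi(x)-\psi(y))}{r^2} \di \vol_g(y) \di \vol_g(x).
\]
Writing $H_r(x)$ for the inner $r^{-2}$-rescaled average, the task reduces to showing $\int_M H_r \di \vol_g \to 2 C_m \int_M \langle \di f, \di \psi \rangle_g \di \vol_g$ as $r \downarrow 0$.

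First I would establish pointwise convergence in the interior. Fix $x \in M \setminus \partial M$ and take $r$ small enough that $r < \min(\mathrm{i}_M(x), \dist_g(x,\partial M))$; then $\exp_x$ identifies $\mathbb{B}_r^m(0) \subset T_xM$ diffeomorphically with $B_r(x)$, the Radon--Nikodym derivative $J_x$ is well-defined on $\mathbb{B}_r^m(0)$, and $V(x,r) = \int_{\mathbb{B}_r^m(0)} J_x \di \xi$. Using the $C^2$ Taylor expansions
\[
    f(\exp_x(\xi)) - f(x) = \langle \di f_x, \xi \rangle + O(|\xi|^2),
    \qquad
    \psi(\exp_x(\xi)) - \psi(x) = \langle \di \psi_x, \xi \rangle + O(|\xi|^2),
\]
the Jacobian expansion $J_x(\xi) = 1 + O(|\xi|^2)$, the Euclidean moment identity $\fint_{\mathbb{B}_r^m(0)} \xi_i \xi_j \di \xi = \frac{r^2}{m+2}\delta_{ij}$, and the ensuing volume expansion $V(x,r) = \vol(\mathbb{B}_r^m)(1 + O(r^2))$, a direct computation shows
\[
    H_r(x) = \frac{1}{m+2} \langle \di f_x, \di \psi_x \rangle_g + O(r) \longrightarrow 2 C_m \langle \di f_x, \di \psi_x \rangle_g \qquad \text{as } r \downarrow 0.
\]

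Second, I would pass to the integral via dominated convergence. The key ingredient is that $H_r$ is bounded uniformly in $x \in M$ and $r>0$ by $\Lip(f)\,\Lip(\psi)$: since $M$ is a length space, $y \in B_r(x)$ forces $|f(x)-f(y)| \le \Lip(f)\, r$, and likewise for $\psi$, whence $|H_r(x)| \le \Lip(f)\,\Lip(\psi)$. As $\vol_g(\partial M) = 0$, the pointwise convergence of the previous step holds $\vol_g$-a.e.\ on $M$, and dominated convergence gives
\[
\tilde{E}_r(f,\psi) \longrightarrow C_m \int_M \langle \di f, \di \psi \rangle_g \di \vol_g,
\]
which combined with \eqref{eq:IBP} is exactly \eqref{eq:conv_inn_prod}.

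The only delicate point is the bookkeeping of error terms in the first step: upon multiplying out the two Taylor expansions against $J_x$, one must verify that all terms besides the leading quadratic contribute $O(r^{m+3})$ after integration over $\mathbb{B}_r^m(0)$, so that after dividing by $V(x,r) \asymp r^m$ and by $r^2$ only an $O(r)$ remainder survives. The boundary is not a true obstacle at this weak level: the uniform bound on $H_r$ absorbs whatever happens on the tube $\{\dist_g(\cdot,\partial M)<r\}$, whose measure is itself $O(r)$. The harder boundary analysis needed for the strong $L^2$ convergence of Theorem \ref{th:2} is a separate matter that will have to be taken up in the subsequent subsections.
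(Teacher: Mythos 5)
Your proof is correct and follows essentially the same mechanism as the paper's: Taylor expansion in normal coordinates at interior points, a uniform $L^\infty$ bound on the local energy density coming from $\Lip(f)\Lip(\psi)$, and dominated convergence using $\vol_g(\partial M)=0$. The one genuine deviation is that you first apply the Fubini--Tonelli symmetrization to rewrite $\tilde{E}_r(f,\psi)$ in Korevaar--Schoen form $\tfrac12\int_M\fint_{B_r(x)}(f(x)-f(y))(\psi(x)-\psi(y))r^{-2}\di\vol_g(y)\di\vol_g(x)$, which eliminates the factor $1+V(x,r)/V(y,r)$ from the integrand. The paper keeps this factor and handles it by invoking the expansion $1+V(x,r)/V(y,r)=2+\underline O(r^2)$ from an external reference, and accordingly its domination bound \eqref{eq:Lipschitz_bound} needs the local Ahlfors estimate \eqref{eq:ahlforsM} to control that factor; your bound $|H_r|\le\Lip(f)\Lip(\psi)$ needs neither. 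That is a mild but real simplification. Two small notes: the observation that the tube $\{\dist_g(\cdot,\partial M)<r\}$ has measure $O(r)$ is superfluous --- dominated convergence only requires a.e.\ pointwise convergence plus a uniform bound, both of which you already have; and $|f(x)-f(y)|\le\Lip(f)\,r$ for $y\in B_r(x)$ is just the definition of the Lipschitz constant on a metric space, no length-space structure is used.
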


\begin{proof}
    For any $x \in M$ and $r>0$, set
    \[
    \tilde{e}_r(f,\psi;x) \df \frac{1}{4}\fint_{B_r(x)} \left( 1+ \frac{V(x,r)}{V(y,r)}\right)  \frac{(f(x)-f(y))(\psi(x)-\psi(y))}{r^2}\di \vol_g(y)
    \]
    so that
    \[
    \langle 
            -\tilde{\Delta}_r f,\psi 
        \rangle_2 = \int_M \tilde{e}_r(f,\psi;y)\di \vol_g(y).
    \]
    On one hand,
    \begin{align*}
        |\tilde{e}_r(f,\psi;x)| & \le \frac{1}{4}\fint_{B_r(x)} \left( 1+ \frac{V(x,r)}{V(y,r)}\right)  \frac{|f(x)-f(y)||\psi(x)-\psi(y)|}{r^2}\di \vol_g(y)\\
        & \le \frac{1}{4}\fint_{B_r(x)} \left( 1+ \frac{V(x,r)}{V(y,r)}\right)  \frac{\Lip(f)\Lip(\psi)\dist_g^2(x,y)}{r^2}\di \vol_g(y) \\
        & \le \frac{\Lip(f)\Lip(\psi)}{4}\fint_{B_r(x)} \left( 1+ \frac{V(x,r)}{V(y,r)}\right)  \di \vol_g(y).
    \end{align*}
    By \eqref{eq:ahlforsM}, we obtain
    \begin{equation}\label{eq:Lipschitz_bound}
        |\tilde{e}_r(f,\psi;x)| \le \frac{\Lip(f)\Lip(\psi)(1+C^2)}{4}\, \cdot 
    \end{equation}
    On the other hand,  assume that $r$ is smaller than $\mathrm{i}_M(x)$, and consider $\tilde{f} \df f \circ \exp_x$ and  $\tilde{\psi} \df \psi \circ \exp_x$ on $\mathbb{B}^m_r(0)$. The first-order Taylor expansion of $\tilde{f}$ and $\tilde{\psi}$ yields 
    \begin{align*}
   \tilde{f}(\xi) & =  \tilde{f}(0) + (\di \tilde{f})_0(\xi) + \underline{O}_{\{0\}}(|\xi|^2)\\
   \tilde{\psi}(\xi) & =  \tilde{\psi}(0) + (\di \tilde{\psi})_0(\xi) + \underline{O}_{\{0\}}(|\xi|^2).
    \end{align*}
    Then
    \begin{align*}
    & \phantom{=}  \int_{B_r(x)} (f(x)-f(y))(\psi(x)-\psi(y))\di \vol_g(y)\\
    & =  \int_{\mathbb{B}^m_r(0)} (\tilde{f}(0)-\tilde{f}(\xi))(\tilde{\psi}(0)-\tilde{\psi}(\xi))J(\xi)\di \mathcal{L}^m(\xi)\\
    & =  \int_{\mathbb{B}^m_r(0)} ((\di \tilde{f})_0(\xi) + \underline{O}_{\{0\}}(r^2))((\di \tilde{\psi})_0(\xi) + \underline{O}_{\{0\}}(r^2))(1+\underline{O}_{\{0\}}\di \mathcal{L}^m(\xi)\\
    &=
        \sum_{i,j=1}^m
        [(\di \tilde{f})_0]_i
        [(\di \tilde{\psi})_0]_j
        \int_{\mathbb{B}^m_r(0)}
        \xi_j
        \xi_i
        \di \mathcal{L}^m(\xi)
    +
    \underline{O}_{\{0\}}(r^{m+3})\\
     & = (\di \tilde{f})_0 \cdot (\di \tilde{\psi})_0 \int_{\mathbb{B}^m_r(0)} \xi_1^2\di \mathcal{L}^m(\xi) + \underline{O}_{\{0\}}(r^{m+3}).
    \end{align*}
    Moreover,   it is known that (see e.g.~\cite[Remark 2.11]{MT2})
\[
1+ \frac{V(x,r)}{V(y,r)} = {2} + \underline{O}_{\{x\}}(r^2)
\]    
and since $V(x,r)/\mathcal{L}^m(\mathbb{B}^m_r(0))) \to 1$ as $r \downarrow 0$, we obtain
    \begin{align*}
    & \phantom{=}  \fint_{B_r(x)} \left( 1+ \frac{V(x,r)}{V(y,r)}\right) (f(x)-f(y))(\psi(x)-\psi(y))\di \vol_g(y)\\
    & =  \frac{\mathcal{L}^m(\mathbb{B}^m_r(0))({2} + \underline{O}_{\{x\}}(r^2)) }{V(x,r)}\fint_{\mathbb{B}^m_r(0)}  (\tilde{f}(0)-\tilde{f}(\xi))(\tilde{\psi}(0)-\tilde{\psi}(\xi))J(\xi)\di \mathcal{L}^m(\xi)\\
        & =  (2 + \underline{O}_{\{x\}}(r^2)) \left(   (\di \tilde{f})_0 \cdot (\di \tilde{\psi})_0 \fint_{\mathbb{B}^m_r(0)} \xi_1^2\di \mathcal{L}^m(\xi) + \underline{O}_{\{0\}}(r^{3})\right).
    \end{align*}
Since $(\di \tilde{f})_0 \cdot (\di \tilde{\psi})_0 = \langle
            \di f
            ,
            \di \psi
        \rangle_g (x)$ and 
        \[
        \fint_{\mathbb{B}^m_r(0)} \xi_1^2\di \mathcal{L}^m(\xi)  = 2 r^2 C_m
        \]
        by change of variable $\xi \leftrightarrow \eta/r^2$, we eventually obtain that
    \begin{equation}
        \tilde{e}_r(f,\psi;x) = C_m \langle
            \di f
            ,
            \di \psi
        \rangle_g (x) + \underline{O}_{\{x\}}(r) \qquad \text{as $r \to 0$.}
     \end{equation}
By \eqref{eq:Lipschitz_bound} and the compactness of $M$, we can apply the  dominated convergence theorem to the functions  $\tilde{e}_r(f,\psi;\cdot)$. Then we get \eqref{eq:conv_inn_prod}.
\end{proof}

Using integration by parts in \eqref{eq:conv_inn_prod}, we immediately obtain the following.

\begin{corollary}
Let $\partial g$ be the Riemannian metric induced by $g$ on $\partial M$. For any $f \in \cC^\infty(M)$, the following convergence holds in the sense of distributions as $r \downarrow 0$:
 \[    
    (\tilde{\Delta}_r f) \, \vol_g
    \to  C_m \bigg( (\Delta_g f) \, \vol_g + (\partial_\nu^g f) \, \vol_{\partial g}\bigg).
    \]
    
\end{corollary}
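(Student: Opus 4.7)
The corollary asserts distributional convergence of the measures $(\tilde{\Delta}_r f)\,\vol_g$ towards a limit that carries both a bulk contribution on $M$ and a surface contribution on $\partial M$. To detect the surface contribution I must test against functions that need not vanish on $\partial M$; accordingly, my plan is to fix an arbitrary test function $\psi \in \cC^\infty(M)$ and pass to the limit in the pairing $\int_M (\tilde{\Delta}_r f)\, \psi \, \di\vol_g$. The only new analytical input available at this stage is Proposition \ref{convergenceOfInnerProds}, which I would apply directly (with $\psi$ as the second argument) to obtain
\begin{equation*}
\lim_{r\downarrow 0} \int_M (\tilde{\Delta}_r f)\, \psi \, \di\vol_g = -C_m \int_M \langle \di f, \di\psi\rangle_g\, \di\vol_g.
\end{equation*}

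The second and final step is Green's first identity on the compact smooth Riemannian manifold with boundary $(M,g)$: writing $\langle \di f, \di\psi\rangle_g = \operatorname{div}_g(\psi\,\nabla f) - \psi\,\Delta_g f$ and applying the divergence theorem expresses the right-hand side above as a bulk integral of $\psi\,\Delta_g f$ against $\vol_g$ together with a boundary integral of $\psi\,\partial_\nu^g f$ against $\vol_{\partial g}$, with signs dictated by the convention for the unit normal $\nu$. Substituting this identity into the previous display and collecting terms yields exactly the distributional limit claimed in the statement. I do not foresee any substantive obstacle here: Proposition \ref{convergenceOfInnerProds} has already done the hard analytic work of passing from the nonlocal operators $\tilde{\Delta}_r$ to the smooth Dirichlet form of $g$, and splitting that Dirichlet form into its bulk and boundary pieces is a textbook integration by parts on a compact smooth manifold.
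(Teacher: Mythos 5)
Your proposal is correct and follows exactly the paper's one-line argument: apply Proposition \ref{convergenceOfInnerProds} to convert the pairing against $\psi \in \cC^\infty(M)$ into the Dirichlet form, then use Green's first identity on $(M,g)$ to split it into a bulk term and a boundary term (with the sign working out because the paper uses the \emph{inner} unit normal $\nu$).
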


\subsection{Pointwise convergence}
We aim to prove Theorem \ref{th:2} in a similar way as Proposition \ref{convergenceOfInnerProds}, that is to say, by means of the dominated convergence theorem. To this aim, we first establish that pointwise convergence holds $\vol_g$-a.e.~on $M$. We recall that $\partial M$ is a $\vol_g$-negligible subset of $M$.

\begin{prop}
\label{lem:LaplaceBeltramiLemma}
    Let $f \in C^\infty(M)$. Then for any $x \in M - \partial M$,
    \begin{equation}
        \lim_{r \rightarrow 0}
        \tilde{\Delta}_r f(x)
    =
         C_m\Delta_g f(x).
    \end{equation}
    Moreover, the convergence is uniform on any compact subset of $M - \partial M$.
\end{prop}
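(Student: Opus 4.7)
The plan is to run the same Taylor expansion argument used in the proof of Proposition \ref{convergenceOfInnerProds}, but applied pointwise at $x$ rather than under an integral against $\psi$. Fix $x \in M \setminus \partial M$ and take $r < \mathrm{i}_M(x)$ so that $\exp_x : \mathbb{B}^m_r(0) \to B_r(x)$ is a diffeomorphism with Jacobian $J_x(\xi) = 1 + \underline{O}_{\{x\}}(|\xi|^2)$. Writing $\tilde{f} = f \circ \exp_x$, I would use the second-order Taylor expansion
\[
\tilde{f}(\xi) - \tilde{f}(0) = (\di \tilde{f})_0(\xi) + \tfrac{1}{2} \sum_{i,j=1}^m \partial_{ij}\tilde{f}(0)\, \xi_i \xi_j + \underline{O}_{\{0\}}(|\xi|^3)
\]
together with the symmetrization factor expansion $1 + V(x,r)/V(y,r) = 2 + \underline{O}_{\{x\}}(r^2)$ already invoked in the proof of Proposition \ref{convergenceOfInnerProds}.

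The linear term integrates to zero by the symmetry of $\mathbb{B}^m_r(0)$, while the quadratic term gives, by the change of variable $\xi \leftrightarrow r\eta$ in \eqref{eq:C_m} and $\int_{\mathbb{B}^m_r(0)} \xi_i \xi_j \di \mathcal{L}^m = 0$ for $i \neq j$,
\[
\sum_{i,j} \partial_{ij}\tilde{f}(0) \int_{\mathbb{B}^m_r(0)} \xi_i \xi_j \di \mathcal{L}^m(\xi) = 2 C_m\, r^2\, \mathcal{L}^m(\mathbb{B}^m_r(0)) \sum_{i=1}^m \partial_i^2 \tilde{f}(0).
\]
Since the coordinates are normal at $x$, $\sum_i \partial_i^2 \tilde{f}(0) = \Delta_g f(x)$. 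After multiplying by $(2 + \underline{O}_{\{x\}}(r^2))$, dividing by $2 r^2 V(x,r)$, and using $\mathcal{L}^m(\mathbb{B}^m_r(0)) / V(x,r) \to 1$ as $r \downarrow 0$, the cubic Taylor remainder leaves an $\underline{O}_{\{x\}}(r)$ error and one concludes $\tilde{\Delta}_r f(x) \to C_m \Delta_g f(x)$.

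For uniform convergence on a compact $K \subset M \setminus \partial M$, I would upgrade each $\underline{O}_{\{x\}}$ to $\underline{O}_K$: on $K$ the injectivity radius has a positive lower bound, the Jacobian expansion and the Taylor remainder of $f$ are uniformly controlled by smoothness of $g$ and $f$, and $V(x,r)/\mathcal{L}^m(\mathbb{B}^m_r(0)) \to 1$ uniformly. The main delicate point will be the expansion $1 + V(x,r)/V(y,r) = 2 + \underline{O}_K(r^2)$ uniformly in $x \in K$ and $y \in B_r(x)$: for this I would fix a compact neighborhood $K' \subset M \setminus \partial M$ of $K$ and restrict to $r < \tfrac{1}{2} \dist_g(K, \partial K' \cup \partial M)$, which is positive by compactness; then all balls $B_r(y)$ with $y \in B_r(x)$ and $x \in K$ avoid $\partial M$ and the pointwise estimate from Proposition \ref{convergenceOfInnerProds} extends uniformly via the smoothness of the volume density in normal coordinates.
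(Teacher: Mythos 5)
Your proposal is correct and follows essentially the same route as the paper: second-order Taylor expansion of $f \circ \exp_x$ in normal coordinates, vanishing of the linear term by symmetry of the ball, identification of the quadratic term with $C_m r^2 \Delta_g f(x)$, and the expansion $1 + V(x,r)/V(y,r) = 2 + \underline{O}(r^2)$ for the symmetrization factor, all combined through the $\underline{O}_K$ notation for uniformity over a compact $K \subset M \setminus \partial M$. Your explicit restriction $r < \tfrac{1}{2}\dist_g(K, \partial M)$ to keep the balls $B_r(y)$ (with $y \in B_r(x)$) away from $\partial M$ is a sensible and slightly more careful treatment of a point the paper handles implicitly via the choice $r < \mathrm{i}_M(x)$.
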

\begin{proof}
Let $K$ be a compact subset of $M - \partial M$. Consider $x \in K$ and $r \in (0,\mathrm{i}_M(x))$.  Set $\tilde{f}_x \df f \circ \exp_x$. Acting like in the proof of Proposition \ref{convergenceOfInnerProds}, we get
\begin{equation}
\tilde{\Delta}_r f(x)
    = 
     \frac{(2 + \underline{O}_{K}(r^2)) }{2 r^2}
    \fint_{\mathbb{B}^m_r(0)}
        \left(\tilde{f}_x(\xi)
            -
            \tilde{f}_x(0)
            \right)
             \di\mathcal{L}^m(\xi).
\end{equation}
The second-order Taylor expansion of $\tilde{f}_x$ yields 
    \begin{align*}
   \tilde{f}_x(\xi) & =  \tilde{f}_x(0) + (\di \tilde{f}_x)_0(\xi) + \frac{1}{2}  (\di^{(2)} \tilde{f}_x)_0(\xi,\xi) + \underline{O}_K(|\xi|^3)
    \end{align*}
hence we get
\begin{align*}
\fint_{\mathbb{B}^m_r(0)}
      \tilde{f}_x(\xi)
            -
            \tilde{f}_x(0)
            \di\mathcal{L}^m(\xi)
    & = 
    \fint_{\mathbb{B}^m_r(0)}
        (\di \tilde{f}_x)_0(\xi)  \di\mathcal{L}^m(\xi) \\
        & + \frac{1}{2}  \fint_{\mathbb{B}^m_r(0)} (\di^{(2)} \tilde{f}_x)_0(\xi,\xi)  \di\mathcal{L}^m(\xi) + \underline{O}_K(r^3).
\end{align*}
The first term vanishes by symmetry. The second term is equal to
\[
\frac{1}{2} \Delta \tilde{f}_x(0)  \fint_{\mathbb{B}^m_r(0)} \xi_1^2  \di\mathcal{L}^m(\xi) = \Delta_g f(x) r^2 C_m.
\]
In the end we get
\begin{align*}
\tilde{\Delta}_r f(x)
    & = 
     \frac{2 + \underline{O}_K(r^2)}{2 r^2}
    \left(  \Delta_g f(x) r^2 C_m + \underline{O}_K(r^3)\right) \\
    & =  (1 + \underline{O}_K(r^2))
    \left( C_m \Delta_g f(x)  + \underline{O}_K(r)\right)
\end{align*}
from which follows the desired result, by letting $r \downarrow 0$.
\end{proof}

\subsection{Uniform bound}

We wish now to provide a uniform $L^\infty$ bound for the functions $-\tilde{\Delta}_r \psi$ where $r$ is in a neighborhood of zero.

Let us first consider the case $\partial M = \emptyset$. From Proposition \ref{lem:LaplaceBeltramiLemma}, we have uniform convergence
\[
\|\tilde{\Delta}_r f - C_m \Delta_g f\|_\infty \to 0
\]
so that
\[
\|\tilde{\Delta}_r f - C_m \Delta_g f\|_2 \le
\|\tilde{\Delta}_r f - C_m \Delta_g f\|_\infty \vol_g(M) \to 0.
\]
Let us now deal with the case $\partial M \neq \emptyset$.

\begin{prop}
\label{prop:L_infty_bound}
    Assume that $\partial M \neq \emptyset$. Consider $f \in C^\infty_{\nu}(M)$. Then there exists $r_0>0$ such that
    \begin{equation}\label{eq:L_infty_bound}
    \sup_{0<r<r_0}\|\tilde{\Delta}_r f\|_{L^\infty(M)} < +\infty.
    \end{equation}
\end{prop}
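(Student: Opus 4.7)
The plan is to split $M$ according to the distance to $\partial M$. For any $\eta > 0$, Proposition \ref{lem:LaplaceBeltramiLemma} gives uniform convergence of $\tilde{\Delta}_r f$ to $C_m \Delta_g f$ on the compact set $\{x \in M : \dist_g(x, \partial M) \ge \eta\}$; choosing $\eta$ small and taking $r \le \eta$ therefore yields a uniform bound on this region. The substance is the uniform bound on the boundary collar $\{x : \dist_g(x, \partial M) < r\}$.

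Fix $r_0 > 0$ small enough that boundary normal coordinates are defined in a tubular neighborhood of $\partial M$ of width $3 r_0$. For $r < r_0$ and $x$ in the collar with $s \df \dist_g(x, \partial M) \le r$, let $p$ be a foot of $x$ on $\partial M$ and use boundary normal coordinates $(t, x') \in [0, 3 r_0) \times U$ centered at $p$, with $x$ corresponding to $(s, 0)$. Then $g = \di t^2 + g_t(x')$, $\di \vol_g = J(t, x') \di t \di x'$ with $J$ smooth and positive, and the Neumann condition becomes $\partial_t f(0, x') \equiv 0$. Taylor expanding $f$ first in $t$ around $t = 0$ and then in $x'$ around $0$ gives, uniformly for $y = (y_t, y') \in B_r(x)$,
\[
f(y) - f(x) = \nabla_{x'} f(0, 0) \cdot y' + R(x, y, r), \qquad |R(x, y, r)| \le C r^2,
\]
the normal-direction term $\partial_t f(x) (y_t - s)$ being absorbed into $R$ since $\partial_t f(x) = \partial_t f(s, 0) = \underline{O}(s)$ by Neumann and $|y_t - s| \le r + \underline{O}(r^2)$.

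Writing $\tilde{\Delta}_r f(x) = \frac{1}{2 r^2 V(x, r)} I(x, r)$ with
\[
I(x, r) \df \int_{B_r(x)} w(x, y) (f(y) - f(x)) \di \vol_g(y), \qquad w(x, y) \df 1 + \frac{V(x, r)}{V(y, r)},
\]
one has $\|w\|_\infty \le C$ by local Ahlfors regularity \eqref{eq:ahlforsM}, so $R$ contributes $\underline{O}(r^{m+2})$ to $I(x, r)$. The essential step is controlling the leading tangential contribution $\nabla_{x'} f(0, 0) \cdot \int_{B_r(x)} w(x, y) y' \di \vol_g(y)$. I would argue that this vector-valued integral is $\underline{O}(r^{m+2})$ via an approximate $y' \mapsto -y'$ symmetry: to leading order in $r$, $B_r(x)$ is the intersection of a Euclidean $r$-ball around $(s, 0)$ with $\{t \ge 0\}$, a set which is $y'$-symmetric; both $J(t, x')$ (choosing Riemannian normal coordinates on $\partial M$ at $p$) and $V(y, r)$ are even in $y'$ at $y' = 0$ to leading order; and corrections enter with an extra power of $r$. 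Combining, $|I(x, r)| \le C r^{m+2}$, and since $V(x, r) \gtrsim r^m$ by \eqref{eq:ahlforsM}, we get $|\tilde{\Delta}_r f(x)| \le C'$ uniformly in $x$ in the collar and $r < r_0$. Together with the bulk bound, this proves \eqref{eq:L_infty_bound}.

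The main obstacle is the rigorous justification of the tangential symmetry, which requires tracking simultaneously the shape of $B_r(x)$ in curved coordinates, the Jacobian $J(t, x')$, and the boundary-sensitive behavior of $V(y, r)$ as $y$ ranges over the collar. The Neumann hypothesis is indispensable: without it, $\partial_t f(x)$ has generic size $\underline{O}(1)$ instead of $\underline{O}(s)$, and the normal asymmetry $\int_{B_r(x)} w(x, y) (y_t - s) \di \vol_g(y) = \underline{O}(r^{m+1})$ produces an $\underline{O}(1/r)$ blow-up in $\tilde{\Delta}_r f$, exactly the phenomenon flagged after the statement of Theorem \ref{th:2}.
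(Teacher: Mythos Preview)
Your outline is essentially the paper's own strategy: split off a fixed collar, handle the complement via the uniform convergence of Proposition~\ref{lem:LaplaceBeltramiLemma}, and in the collar reduce $\tilde{\Delta}_r f(x)$ to a first-order Taylor term whose tangential part vanishes by an approximate reflection symmetry and whose normal part is killed by the Neumann condition $\partial_t f = \underline{O}(s)$. One small correction: your split leaves the region $\{r < s < \eta\}$ unaccounted for, since you invoke the interior bound only on $\{s \ge \eta\}$ and the collar estimate only on $\{s \le r\}$. The paper treats the entire collar $\{s < \epsilon/4\}$ with the boundary coordinates, then at the very end distinguishes $s > 2r$ (the ball misses $\partial M$ and the normal term cancels by exact symmetry) from $s \le 2r$ (where $|\partial_t f| \le Cs \le 2Cr$).

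What you call ``the main obstacle'' is precisely where the paper invests its effort. It builds, for each boundary chart, a family of parametrizations $\Psi_{(\xi,t)}$ adapted to the point $x = \Phi(\xi,t)$ and proves through a chain of seven claims that (i) the Riemannian distance $\dist_g$ and an auxiliary product distance $\dist_t$ agree to $O(r^2)$ on scale $r$, (ii) consequently $\Psi_{(\xi,t)}^{-1}(B_r(x))$ and the Euclidean half-ball $\mathbb{B}^m_r(0,t) \cap \mathbb{H}^m$ differ by a set of Lebesgue measure $O(r^{m+1})$, (iii) the volume weight satisfies $|V(y,r)^{-1} - \mathcal{L}^m(\mathbb{B}^m_r(0,s)\cap\mathbb{H}^m)^{-1}| = O(r^{-(m-1)})$, and (iv) the Jacobian $\det(\Psi_{(\xi,t)}^*g)^{1/2}$ is $1 + O(r)$. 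These estimates let one replace each ingredient of $G_r(x)$ by its Euclidean half-space model with $O(1)$ error, after which the tangential symmetry is exact. Your sketch of ``$J$ and $V(y,r)$ are even in $y'$ to leading order'' is the right intuition, but the paper's route is to push all curvature effects into controlled remainders rather than to expand the weight and the domain in $y'$ directly; this avoids having to track the dependence of $V(y,r)$ on the curved shape of $B_r(y)$, which is delicate since $y$ itself ranges over the collar.
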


\begin{proof}
Since $\partial M$ is compact, we can find a finite collection of smooth parameterizations $\psi_i:(-4,4)^{m-1} \rightarrow \partial M$ such that
\begin{equation}
\label{eq:coverOfBoundary}
    \partial M = \bigcup_i
    \psi_i \left(\left[ -1,1\right]^{m-1}\right).
\end{equation}

\paragraph{\bf Step 1.} We work with any of the previous $\psi_i$ which we denote by $\psi$. For any $x \in \partial M$, let $\nu_x \in T_xM$ be the unit inner normal vector of $\partial M$ at $x$. Since $\partial M$ is smooth, there exists $\epsilon > 0$ such that the map $E: \partial M \times [0,\epsilon] \rightarrow M$ given by
\begin{equation}
    E(x, t)
=
    \exp_x^M(t \nu_x)
\end{equation}
is an embedding, and there exists a smooth family of metrics $\{g_t\}_{t \in [0,\eps]}$ on $\partial M$ such that for any $(x,t) \in \partial M \times [0,\epsilon]$,
\begin{equation}\label{eq:E*g}
    (E^*g)_{(x,t)}
=
    (g_t
\oplus
    d\tau^2)_{(x,t)}.
\end{equation}
Pulling back each metric $g_t$ by $\psi$ we have, for any $\xi \in (-4,4)^{m-1}$ and $v,w \in \mathbb{R}^{m-1}$,
\begin{equation}
\label{eq:pullBackOfBoundaryMetrics}
    (\psi^*g_t)_\xi(v,w)
=
    v^T \cdot A_{(\xi,t)} \cdot w,
\end{equation}
for some positive definite, symmetric $(m-1)$-square matrix $A_{(\xi,t)}$. From the non-degeneracy of the metric and a Lipschitz bound, we have that there exist $C,\tilde{c}>0$ such that for all $t,s \in [0,\epsilon], \xi \in [-3,3]^{m-1}, v \in \mathbb{R}^m$ we have
\begin{equation}
\label{eq:nonDegeneracyMetric}
    \left[(\psi^*g_t) \oplus d\tau^2\right]_{(\xi,t)}(v,v)
\geq
    \tilde{c}|v|^2,
\end{equation}
\begin{equation}
\label{eq:lipschitzOfMetric}
    \left|
        \left[(\psi^*g_t) \oplus d\tau^2\right]_{(\xi,t)}(v,v)
    -
        \left[(\psi^*g_s) \oplus d\tau^2\right]_{(\xi,t)}(v,v)
    \right|
\leq
    C|v|^2 |t-s|.
\end{equation} 

\begin{claim}
\label{lem:estimateOfDifferenceMetrics}
    There exists $K>0$ such that for any $t,s \in [0,\epsilon]$, $\xi \in [-3,3]^{m-1},$ $v \in \mathbb{R}^m$ such that
    \begin{equation}
    \label{eq:differenceBetweenMetrics}
        \left|
            O(\xi,t,s,v)
        \right|
    \leq
        K|v|\cdot |t-s|,
    \end{equation}
    where \begin{equation}
\label{eq:DefinitionOfO}
    O(\xi,t,s,v)
=
    \left[(\psi^*g_t) \oplus d\tau^2\right]_{(\xi,t)}^\frac{1}{2}(v,v)
    -
        \left[(\psi^*g_s) \oplus d\tau^2\right]_{(\xi,t)}^\frac{1}{2}(v,v).
\end{equation}
\end{claim}
\begin{proof}
    Consider $\tilde{c}>0$ given by \eqref{eq:nonDegeneracyMetric}. We know that there exists $M>0$ such that the map $\sqrt{\cdot}: [\tilde{c},+\infty) \rightarrow \mathbb{R}$ is $M$-Lipschitz.
    By homogeneity in $|v|$ of \eqref{eq:differenceBetweenMetrics}, we can assume that $|v| = 1$. Then the Lipschitz condition and \eqref{eq:lipschitzOfMetric} yield
    \begin{align}
        \left|O(\xi,t,s,v)\right|
    & =
    \left|
        \left[(\psi^*g_t) \oplus d\tau^2\right]_{(\xi,t)}^\frac{1}{2}(v,v)
        -
            \left[(\psi^*g_s) \oplus d\tau^2\right]_{(\xi,t)}^\frac{1}{2}(v,v)
    \right|\\
    & \leq
    M\left|
        \left[(\psi^*g_t) \oplus d\tau^2\right]_{(\xi,t)}(v,v)
        -
            \left[(\psi^*g_s) \oplus d\tau^2\right]_{(\xi,t)}(v,v)
    \right|\\
    & \leq 
    MC|v|^2|t-s|
    =
    MC|t-s|.
    \end{align}
\end{proof}

\paragraph{\bf Step 2.} Let $T(\epsilon) \df E(\partial M \times [0,\epsilon])$ be the $\epsilon$ tubular neighborhood of the boundary. Then $E$ is a diffeomorphism between $\partial M \times [0,\epsilon]$ and $T(\epsilon)$. For fixed $s\in [0,\epsilon]$, consider the product metric $g_s \oplus d\tau^2$ in $\partial M \times [0,\epsilon]$ and define the metric in $T(\epsilon)$ 
\begin{equation}
    \eta_s \df (E^{-1})^*(g_s\oplus d\tau^2).
\end{equation}
Let $\dist_s$ be the distance induced from this metric. Consider the map $\Phi:(-4,4)^{m-1}\times [0,\epsilon] \rightarrow M$ given by
\begin{equation}\label{eq:defPhi}
    \Phi(\xi,t)
=
    E(\psi(\xi),t)
\end{equation}
and note that \eqref{eq:E*g} implies that for any $(\xi,t) \in (-4,4)^{m-1}\times [0,\epsilon]$,
\begin{equation}\label{eq:Phi}
    (\Phi^*g)_{(\xi,t)} = (\psi^*g_t \oplus \di \tau^2)_{(\xi,t)}.
\end{equation}

Set $\mathbb{H}^m \df \{v \in \mathbb{R}^m : v_m \ge 0\}$.  Let $L(\tilde{\gamma})$ be the Euclidean length of a curve $\tilde{\gamma}:[0,1] \to \mathbb{R}^m$.

\begin{claim}
\label{lem:geodesiceInParametrization}
    Let $\tilde{c}>0$ be given by \eqref{eq:nonDegeneracyMetric}. For any $r>0$ and $s \in [0,\eps]$, if a couple $(\xi,t) \in [-2,2]^{m-1}\times[0,\epsilon]$ is such that $\mathbb{B}^m_{r/\sqrt{\tilde{c}}}(\xi,t) \cap \mathbb{H}^m \subset [-3,3]^{m-1}\times[0,\epsilon]$, then the following holds for any $y \in M$.
    \begin{enumerate}
        \item If $\dist(\Phi(\xi,t), y) < r$, then the image of any $\dist$-minimizing geodesic $\gamma : [0,1] \rightarrow M$ is contained in $\Phi(\mathbb{B}^m_{r/\sqrt{\tilde{c}}}(\xi,t) \cap \mathbb{H}^m)$ and $\tilde{\gamma} = \Phi^{-1}\circ \gamma$ satisfies $L(\tilde{\gamma}) < \frac{r}{\sqrt{\tilde{c}}}$.
        \item If $\dist_s(\Phi(\xi,t), y) < r$, then the image of any $\dist_s$-minimizing geodesic $\gamma : [0,1] \rightarrow M$ is contained in $\Phi(\mathbb{B}^m_{r/\sqrt{\tilde{c}}}(\xi,t) \cap \mathbb{H}^m)$ and $\tilde{\gamma} = \Phi^{-1}\circ \gamma$ satisfies $L(\tilde{\gamma}) < \frac{r}{\sqrt{\tilde{c}}}$.
    \end{enumerate}
\end{claim}
\begin{proof}
    We prove the first result only since the proof of the second one follows from similar lines. Consider a $\dist$-minimizing geodesic $\gamma :[0,1] \rightarrow M$ from $\Phi(\xi,t)$ to $y$. Set \begin{align*}
        \delta & \df \sup \left\{t \in [0,1] \, : \, \gamma(s) \in \Phi(\mathbb{B}^m_{r/\sqrt{c}}(\xi,t) \cap \mathbb{H}^m) \text{ for any }s \in [0,t)\right\},\\
        \tilde{\gamma} & \df \Phi^{-1} \circ \left. \gamma \right|_{[0,\delta]},
        \end{align*}
    and observe that $\gamma([0,1]) \subset \Phi(\mathbb{B}^m_{r/\sqrt{c}}(\xi,t) \cap \mathbb{H}^m)$ if and only if $\delta = 1$. We claim that
    \begin{equation}\label{eq:111}
        L(\tilde{\gamma}) \le \frac{\dist_g(\Phi(\xi,t), y) }{\sqrt{\tilde{c}}} \, \cdot 
    \end{equation}
    Indeed, setting  $ (\tilde{\alpha},\tilde{\gamma}_m) \df  \tilde{\gamma}$ where $\tilde{\alpha} :  [0,\delta] \rightarrow [-3,3]^{m-1}$ and $\tilde{\gamma}_m:[0,\delta]\rightarrow[0,\epsilon]$, we have
    \begin{align}
        \dist(\Phi(\xi,t), y)
    & =
        \int_{0}^1g_{\gamma(w)}^\frac{1}{2}(\dot{\gamma}(w),\dot{\gamma}(w)) \di w\\
    & \geq
        \int_0^\delta
        g_{\gamma(w)}^\frac{1}{2}(\dot{\gamma}(w),\dot{\gamma}(w)) \di w\\
    & =
        \int_0^\delta
        (\Phi^*
        g)_{\tilde{\gamma}(w)}^\frac{1}{2}
        (\dot{\tilde{\gamma}}(w), \dot{\tilde{\gamma}}(w)) \di w \\
    & =  \int_0^\delta
        (\psi^*g_{\tilde{\gamma}_m(w)}\oplus d\tau^2)_{\tilde{\gamma}(w)}^\frac{1}{2}
        (\dot{\tilde{\gamma}}(w), \dot{\tilde{\gamma}}(w))\di w \qquad \text{by \eqref{eq:Phi}}\\
    & \geq
        \int_0^\delta
        \sqrt{\tilde{c}}
        \left|\dot{\tilde{\gamma}}(w))\right|
        \di w \qquad \qquad \qquad \qquad \qquad \qquad   \text{by \eqref{eq:nonDegeneracyMetric}} \\
    & =
        \sqrt{\tilde{c}}
        \,
        L(\tilde{\gamma}).
    \end{align}
    Now  we claim that:
    \begin{equation}\label{eq:112}
        \delta < 1 \qquad \Rightarrow \qquad \dist_g(\Phi(\xi,t), y)\geq r.
    \end{equation}
    Indeed, if $\delta < 1$, since $\tilde{\gamma}(0) = (\xi,t)$ and $\tilde{\gamma}(\delta) \in \partial\mathbb{B}^m_{r/\sqrt{\tilde{c}}}(\xi,t)\cap \mathbb{H}^m$, then
    \begin{equation}
        L(\tilde{\gamma}) \geq \frac{r}{\sqrt{\tilde{c}}}
    \end{equation}
    and we get $\dist_g(\Phi(\xi,t), y)\geq r$ from \eqref{eq:111}. Therefore, if $\dist_g(\Phi(\xi,t), y)<r$, then \eqref{eq:112} implies that $\delta=1$ which means $\gamma([0,1])$ is included in $\Phi(\mathbb{B}^m_{r/\sqrt{c}}(\xi,t) \cap \mathbb{H}^m)$, and \eqref{eq:111} yields $L(\tilde{\gamma})< \frac{r}{\sqrt{\tilde{c}}}$ as desired.  
\end{proof}

\begin{claim}
\label{lem:relationBetweenMetrics}
    There exists $K>0$ such that for all $(\xi,t),(\eta,s) \in [-2,2]^{m-1}\times [0,\epsilon]$ and $r>0$ such that $\mathbb{B}^m_{r/\sqrt{\tilde{c}}}(\xi,t) \cap \mathbb{H}^m \subset [-3,3]^{m-1}\times[0,\epsilon]$:
    \begin{equation}
    \label{eq:relationBetweenMetrics1}
        \dist(\Phi(\xi,t), \Phi(\eta,s) )
    <
        r \quad 
    \Rightarrow \quad 
        \dist_t(\Phi(\xi,t), \Phi(\eta,s) )
    <
        r+Kr^2,
    \end{equation}
    \begin{equation}
    \label{eq:relationBetweenMetrics2}
        \dist(\Phi(\xi,t), \Phi(\eta,s) )
    \geq
        r \quad 
    \Rightarrow \quad 
        \dist_t(\Phi(\xi,t), \Phi(\eta,s) )
    \geq
        r
    -
        Kr^2.
    \end{equation}
\end{claim}
\begin{proof}
    Suppose that $\dist(\Phi(\xi,t),\Phi(\eta,s))<r$. Let $\gamma:[0,1] \rightarrow M$ be a $\dist$-minimizing geodesic between $\Phi(\xi,t)$ and $\Phi(\eta,s) $. Then by Claim \ref{lem:geodesiceInParametrization}, we have that $\gamma([0,1]) \subset \Phi(\mathbb{B}^m_{r/\sqrt{\tilde{c}}}(\xi,t) \cap \mathbb{H}^m)$, and by defining $(\tilde{\alpha},\tilde{\gamma}_m) = \tilde{\gamma} \df \Phi^{-1}\circ \gamma$, we have that $L(\tilde{\gamma}) < \frac{r}{\sqrt{\tilde{c}}}$. Thus we obtain
    \begin{align}
        \dist(\Phi(\xi,t),\Phi(\eta,s) )
    &=
        \int_0^1
        g_{\gamma(w)}^\frac{1}{2}(\dot{\gamma}(w),\dot{\gamma}(w))\di w\\
    & = \int_0^1
            g_{\gamma(w)}^\frac{1}{2}(\dot{\gamma}(w),\dot{\gamma}(w)) \di w -
        \int_0^1
        \left(
            \eta_t
        \right)_{\gamma(w)}^\frac{1}{2}(\dot{\gamma}(w),\dot{\gamma}(w)) \di w\\
    & \quad  +
        \int_0^1
        \left(
            \eta_t
        \right)_{\gamma(w)}^\frac{1}{2}(\dot{\gamma}(w),\dot{\gamma}(w))\di w\\    
    &=
        \int_0^1
        \left(
            \psi^*g_{\tilde{\gamma}_m(w)}
        \oplus
            d\tau^2
        \right)_{(\tilde{\alpha}(w), \tilde{\gamma}_m(w))}^\frac{1}{2}(\dot{\tilde\gamma}(w),\dot{\tilde\gamma}(w))\di w\\
    &\quad -
        \int_0^1
        \left(
            \psi^*
            g_t
        \oplus
            d\tau^2
        \right)_{(\tilde{\alpha}(w), \tilde{\gamma}_m(w))}^\frac{1}{2}(\dot{\tilde\gamma}(w),\dot{\tilde\gamma}(w))\di w\\
    & \quad +
        \int_0^1
        \left(
            \eta_t
        \right)_{\gamma(w)}^\frac{1}{2}(\dot{\gamma}(w),\dot{\gamma}(w))\di w
    \\
    &=\int_0^1
        O(\tilde{\alpha}(w),\tilde{\gamma}_m(w), t, \dot{\gamma}(w))
    \di w
        \\
    &\quad+
    \int_0^1
        \left(
            \eta_t
        \right)_{\gamma(w)}^\frac{1}{2}(\dot{\gamma}(w),\dot{\gamma}(w))\di w, \label{eq:equalityOfDist}
    \end{align}
    where we use \eqref{eq:DefinitionOfO} to get the last equality. By Claim \ref{lem:estimateOfDifferenceMetrics}, we have that
    \begin{align}
        \left|
            \int_0^1
        O(\tilde{\alpha}(w), \tilde{\gamma}_m(w), t, \dot{\gamma}(w))
            \di w
        \right|
    &\leq
        \int_0^1
        \left|
            O(\tilde{\alpha}(w), \tilde{\gamma}_m(w), t, \dot{\gamma}(w))
        \right|
        \di w\\
    &\leq
        \int_0^1
            K
            \left|
                \dot{\tilde{\gamma}}(w)
            \right|
            \left|
                t-\tilde{\gamma}_m(w)
            \right|
            \di w.
    \end{align} 
    By Claim \ref{lem:geodesiceInParametrization}, we have that $|t-\tilde{\gamma}_m(s)|< \frac{r}{\sqrt{\tilde{c}}}$ and $L(\tilde{\gamma})< \frac{r}{\sqrt{\tilde{c}}}$, hence we get
    \begin{align}
    \label{eq:estimateOfO}
        \left|
            \int_0^1
        O(\tilde{\alpha}(w), \tilde{\gamma}_m(w), t, \dot{\gamma}(w))
            \di w
        \right|
    &\leq
        K
        \frac{r}{\sqrt{\tilde{c}}}
        \int_0^1
        |\dot{\tilde{\gamma}}(w)|
        \di w  < K\frac{r^2}{\tilde{c}} \, \cdot
    \end{align}
    Thus 
    \begin{align}
        \dist_t(\Phi(\xi,t), \Phi(\eta,s) )
    &\leq
        \int_0^1
        \left(
            \eta_t
        \right)_{\gamma(w)}^\frac{1}{2}(\dot{\gamma}(w),\dot{\gamma}(w))\di w\\
    &\leq
        \dist(\Phi(\xi,t), \Phi(\eta,s))
    +
        \left|
            \int_0^1
            O(\tilde{\alpha}(w), \tilde{\gamma}_m(w),t,\dot{\gamma}(w))
            \di w
        \right|\\
    &\leq
        \dist(\Phi(\xi,t), \Phi(\eta,s))
    +
        K\frac{r^2}{\tilde{c}}\\
    & < r + K\frac{r^2}{\tilde{c}} \, \cdot 
    \end{align}
    where we use \eqref{eq:equalityOfDist} to get the second inequality and \eqref{eq:estimateOfO} to get the third one. This proves \eqref{eq:relationBetweenMetrics1}. 

To prove \eqref{eq:relationBetweenMetrics2}, we may assume $\dist(\Phi(\xi,t), \Phi(\eta,s) )\geq r$ and $\dist_t(\Phi(\xi,t), \Phi(\eta,s) )< r$. Let $\gamma:[0,1] \rightarrow M$ be a geodesic in the metric $g_t\oplus d\tau^2$. By Claim \ref{lem:geodesiceInParametrization}, we have $\gamma([0,1]) \subset \Phi(\mathbb{B}^m_{\frac{r}{\sqrt{\tilde{c}}}}(\xi,t)\cap \mathbb{H}^m)$ with $(\tilde{\alpha},\tilde{\gamma}_m) = \tilde{\gamma} = \Phi^{-1}\circ \gamma$ satisfying $L(\tilde{\gamma})< \frac{r}{\sqrt{\tilde{c}}}$. The same estimates as before are satisfied, hence we conclude:
\begin{align}
        r 
    &< 
        \dist(\Phi(\xi,t),\Phi(\eta,s) )
    \leq
        \int_0^1
        g^\frac{1}{2}_{\gamma(w)}
        (\dot{\gamma}(w),\dot{\gamma}(w))\di w\\
    &\leq
        \int_0^1
        (\eta_t)^\frac{1}{2}_{\gamma(w)}
        (\dot{\gamma}(w),\dot{\gamma}(w))\di w
    +
    \left|
    \int_0^1
        O(\tilde{\alpha}(w), \tilde{\gamma}_m(w) ,t,\dot{\tilde{\gamma}}(w))
        \di w
    \right|\\
    &\leq
        \dist_t(\Phi(\xi,t),\Phi(\eta,s) )
    +
        \frac{K}{\tilde{c}}r^2.
\end{align}
\end{proof}

We omit the proof of the next elementary claim.

\begin{claim}
\label{lem:geodesicsOfCylinder}
    Let $(x,t), (y,\tau) \in \partial M \times [0,\epsilon]$ and $s \in [0,\epsilon]$. Let $\gamma:[0,1] \rightarrow \partial M$ be the geodesic between $x$ and $y$ in the metric $g_s$. Then the curve
    \begin{equation}
        w \in [0,1] \mapsto (\gamma(w), (1-w)t + w \tau)
    \end{equation}
    is a geodesic in the metric $g_s\oplus d\tau^2$.
\end{claim}

\paragraph{\bf Step 3.}
For every $t \in [0,\epsilon]$, consider the  exponential map $\exp^{g_t}$ given by the metric $g_t$. Since $g_t$ varies smoothly with respect to $t$, and $\partial M \times [0,\epsilon]$ is compact,  we know that there exists $\delta>0$ lower than the injectivity radius of each $\exp^{g_t}$. For any $(\xi, t,\zeta,s)$ in
\[
D_\delta := 
    [-1,1]^{m-1}\times [0,\epsilon] \times \mathbb{B}^{m-1}_\delta(0) \times [0,\epsilon]
\]
define
    \begin{equation}
        \Psi(\xi,t,\zeta,s)
    \df
        E(\exp^{g_t}_{\psi(\xi)}((d\psi)_\xi A_{(\xi,t)}^{-\frac{1}{2}}\zeta),s).
    \end{equation}
We may write
\[
        \Psi_{(\xi,t)}(\zeta,s)
    = \Psi(\xi,t,\zeta,s)
\]
to see $\Psi$ as a function of the two last variables only, the two first being frozen.  Observe that for any $(\xi, t,\zeta)$ in $[-1,1]^{m-1}\times [0,\epsilon] \times \mathbb{B}^{m-1}_\delta(0)$,
    \begin{align}
        (g_t)^\frac{1}{2}_{\psi(\xi)}((d\psi)_\xi A_{(\xi,t)}^{-\frac{1}{2}}\zeta,(d\psi)_{\xi} A_{(\xi,t)}^{-\frac{1}{2}}\zeta)
    &=
        (\psi^* g_t)_\xi^\frac{1}{2}(A_{(\xi,t)}^{-\frac{1}{2}}\zeta,A_{(\xi,t)}^{-\frac{1}{2}}\zeta)\\
    &=
        \left|
            \zeta^T
                A_{(\xi,t)}^{-\frac{1}{2}}
                A_{(\xi,t)}
                A_{(\xi,t)}^{-\frac{1}{2}}
            \zeta
        \right|^\frac{1}{2} \qquad \text{by  \eqref{eq:pullBackOfBoundaryMetrics}}\\
    &=
        |\zeta|
    <
        \delta.
    \end{align}
    Since $\delta$ is lower than the injectivity radius of the exponentials,  the map
    \begin{equation}
        \zeta \in \mathbb{B}^{m-1}_\delta(0)
    \mapsto
        \exp^{g_t}_{\psi(\xi)}((d\psi)_\xi A_{(\xi,t)}^{-\frac{1}{2}}\zeta)
    \end{equation}
    is injective. Thus for every $(\xi,t) \in [-1,1]^{m-1}\times[0,\epsilon]$ the map $\Psi_{(\xi,t)}$ defined on $\mathbb{B}^{m-1}_\delta(0)\times[0,\epsilon]$ is a local parametrization of $M$.  Moreover,
    \begin{equation}\label{eq:PsiPhi}
        \Psi_{(\xi,t)}(0,t)
    =
        E(\psi(\xi),t)
    =
        \Phi(\xi,t),
    \end{equation}
    and
    \begin{equation}
        \det\left(\left[\Psi_{(\xi,t)}^*g\right]_{(0,t)} \right)
    =
        1.
    \end{equation}

\begin{claim}
\label{lem:CylinderDistance}
    Consider $(\xi,t) \in [-1,1]^{m-1}\times [0,\epsilon]$, and $(\zeta,s) \in \mathbb{B}^{m-1}_\delta(0) \times [0,\epsilon]$. Then
    \begin{equation}
        \dist_t(\Psi_{(\xi,t)}(\zeta,s), \Psi_{(\xi,t)}(0,t))
    =
        \sqrt{|\zeta|^2+(t-s)^2}.
    \end{equation}
\end{claim}
\begin{proof}
    By Claim \ref{lem:geodesicsOfCylinder}, we know that the geodesic between $E^{-1}(\Psi_{(\xi,t)}(0,t))$ and $E^{-1}(\Psi_{(\xi,t)}(\zeta,s))$ in the metric $g_s \oplus d \tau^2$ is 
\begin{align*}
\gamma  : w \in [0,1] & \mapsto  (\exp^{g_t}_{\psi(\xi)}((d\psi)_\xi A_{(\xi,t)}^{-\frac{1}{2}}w\zeta), (1-w)t+ws)\\
&  \fd (\tilde{\gamma}(w),\gamma_m(w)).
\end{align*}    
     Then we also know that, for any $w \in [0,1]$,
    \begin{equation}
        (g_t)_{\tilde{\gamma}(w)}^\frac{1}{2}(\dot{\tilde{\gamma}}(w), \dot{\tilde{\gamma}}(w)) = |\zeta|.
    \end{equation}
    Thus
    \begin{align}
        \dist_t(\Psi_{(\xi,t)}(\zeta,s), \Psi_{(\xi,t)}(0,t))
    &=
        \int_0^1
        \sqrt{
            (g_t)_{\tilde{\gamma}(w)}^2(\dot{\tilde{\gamma}}(w),\dot{\tilde{\gamma}}(w))
        +
            (s-t)^2
        }     
        \di w\\
    &=
        \sqrt{|\zeta|^2 + (s-t)^2}.
    \end{align}
\end{proof}

\begin{claim}
\label{lem:DifferenceOfBallsLemma}
    There exist $r_0,\kappa>0$ such that for all $(\xi,t) \in [-2,2]^{m-1}\times[0,\epsilon/2]$ and $r \in (0,r_0)$ such that $\mathbb{B}^m_{r/\sqrt{\tilde{c}}}(\xi,t) \subset [-3,3]^{m-1}\times[0,\epsilon]$, we have
    \begin{equation}
        \mathcal{L}^m\left(
            \mathbb{B}^m_r(0,t) \, 
        \triangle \, 
            \bigg(\Psi_{(\xi,t)}^{-1}\left(B_r(\Psi_{(\xi,t)}(0,t))\right)
        \right) \bigg)
    \leq
        \kappa r^{m+1}.
    \end{equation}
\end{claim}
\begin{proof}
For any $(\xi,t) \in [-2,2]^{m-1}\times[0,\epsilon/2]$, there exists $r_0(\xi,t)>0$ small enough such that
    \begin{equation}\label{eq:inclusion_of_balls}
        B_{r_0}(\Psi_{(\xi,t)}(0,t))
    =
        B_{r_0}(\Phi(\xi,t))
    \subset 
        \Psi_{(\xi,t)}(\mathbb{B}^{m-1}_{\delta}(0) \times [0,\epsilon]).
    \end{equation}
    By compactness of $[-2,2]^{m-1}\times[0,\epsilon/2]$ and continuity of the maps $\Psi_{(\xi,t)}^{-1}$, we get that there exists a common $r_0>0$ such that the previous holds for any $(\xi,t) \in [-2,2]^{m-1}\times[0,\epsilon/2]$. Consider $r < r_0$ and $(\xi,t) \in [-2,2]^{m-1}\times[0,\epsilon/2]$, then 
    \begin{equation}
        B_r(\Psi_{(\xi,t)}(0,t)) \subset \text{Im}(\Psi_{(\xi,t)}).
    \end{equation}
    Set $A_1
    :=
        \Psi_{(\xi,t)}^{-1}(B_r(\Psi_{(\xi,t)}(0,t)))$ and
        $A_2
    :=
        \mathbb{B}^m_r(0,t).$    We will show that there exists $K>0$ such that \begin{equation}\label{eq:toprove}
            A_1\backslash A_2 \subset \mathbb{B}^m_{r+K r^2}(0,t)\backslash\mathbb{B}^m_r(0,t).
            \end{equation}
            For $(\zeta,s) \in A_1\backslash A_2$,  we know that
    \begin{equation}
        \dist(\Psi_{(\xi,t)}(\zeta,s),\Psi_{(\xi,t)}(0,t))
    <
        r.
    \end{equation}
    Therefore, from \eqref{eq:PsiPhi} and Claim \ref{lem:relationBetweenMetrics}, we conclude that there exists $K>0$ such that
    \begin{equation}
        \dist_t(\Psi_{(\xi,t)}(\zeta,s),\Psi_{(\xi,t)}(0,t))
    <
        r+Kr^2.
    \end{equation}
    Then we get from Claim \ref{lem:CylinderDistance} that
    \begin{equation}
        \sqrt{|\zeta|^2 + (t-s)^2}
    <
        r+Kr^2
    \end{equation}
    hence \eqref{eq:toprove} is proved. A similar proof shows that $$A_2\backslash A_1 \subset \mathbb{B}^m_r(0,t)\backslash \mathbb{B}^m_{r-Kr^2}(0,t).$$ From the latter and \eqref{eq:toprove} we conclude that
    \begin{equation}
        A_1\triangle A_2
    \subset
        \mathbb{B}^m_{r+Kr^2}(0,t)
    \backslash
        \mathbb{B}^m_{r-Kr^2}(0,t).
    \end{equation}
    Thus
    \begin{align}
        \mathcal{L}^m(A_1 \triangle A_2)
    &\leq
        \mathcal{L}^m(\mathbb{B}^m_{r+Kr^2}(0,t)
    \backslash
        \mathbb{B}^m_{r-Kr^2}(0,t))\\
    &\leq
        r^m \mathcal{L}^m(\mathbb{B}^m_{1+Kr}(0)\backslash \mathbb{B}^m_{1-Kr}(0))\\
    & =     r^m \omega_m ((1+Kr)^m - (1-Kr)^m)\\
    &\leq
        r^mC(Kr) \qquad \qquad \qquad \qquad \qquad \qquad \text{for some $C>0$}\\
    & \leq
        (CK)r^{m+1}.
    \end{align}
\end{proof}

\begin{claim}
\label{lem:quotientDIfferenceOfBalls}
    There exist $C,r_0>0$ such that for all $r \in (0,r_0)$ and $$(\xi,t,\zeta,s) \in [-1,1]^{m-1}\times [0,\epsilon/4] \times \mathbb{B}^{m-1}_\delta(0) \times [0,\epsilon]$$ such that $\mathbb{B}^m_{2r/\sqrt{\tilde{c}}}(\xi,t) \cap \mathbb{H}^m \subset [-2,2]^{m-1}\times[0,\epsilon/2]$ and $\dist(\Psi_{(\xi,t)}(\zeta,s), \Phi(\xi,t))<r$, then
    \begin{equation}
        \left|
            \frac{1}{V(\Psi_{(\xi,t)}(\zeta,s),r)}
        -
            \frac{1}{\mathcal{L}^m(\mathbb{B}^m_r(0,s) \cap \mathbb{H}^m)}
        \right|
    \leq
        \frac{C}{r^{m-1}} \, \cdot
    \end{equation}
\end{claim}
\begin{proof} Let us first consider the map $G:D_{\delta} \rightarrow \mathbb{R}$ given by
    \begin{equation}
        G(\xi,t,\zeta,s)
    \df
        \det\left(\left[\Psi_{(\xi,t)}^*g\right]_{(\zeta,s)}\right)^\frac{1}{2}.
    \end{equation}
    This map is $C^\infty$, and its value at any $(\xi,t)\times(0,t)$ is $1$, thus by a Taylor expansion in the variable $(\zeta,s)$ centered at $(0,t)$, and compactness of $D_\delta$, we obtain that there exists $k>0$ such that for all $(\xi,t,\zeta,s) \in D_{\delta}$ we have
    \begin{equation}\label{eq:G}
        \left|
            \det\left(\left[\Psi_{(\xi,t)}^*g\right]_{(\zeta,s)}\right)^\frac{1}{2}
        -
            1
        \right|
    \leq
        k|(\zeta,s)-(0,t)|.
    \end{equation} 
    In particular there exists $C>0$ such that for all $(\xi,t,\zeta,s) \in D_{\delta}$,
    \begin{equation}
        \left|
            \det\left(\left[\Psi_{(\xi,t)}^*g\right]_{(\zeta,s)}\right)^\frac{1}{2}
        \right|
    \leq
        C.
    \end{equation}

Let us now consider $r,\xi,t,\zeta,s$ as in the statement of the claim. Set $y \df \Psi_{(\xi,t)}(\zeta,s)$. Since $\dist(y, \Phi(\xi,t))< r$, and $\mathbb{B}^m_{2r/\sqrt{\tilde{c}}}(\xi, t) \subset [-2,2]^{m-1} \times [0,\epsilon/2]$, we know by Claim \ref{lem:geodesiceInParametrization} that $y \in \Phi(\mathbb{B}^m_{r/\sqrt{\tilde{c}}}(\xi,t) \cap \mathbb{H}^m)$. Thus if we set
    \begin{equation}
    \label{eq:inverseOfPointPhi}
        (\eta, \tau) \df \Phi^{-1}(y) \in [-2,2]^{m-1}\times[0,\epsilon/2],
    \end{equation}
    we obtain that
    \begin{equation}
        \mathbb{B}^m_{r/\sqrt{\tilde{c}}}(\eta, \tau)
        \cap
        \mathbb{H}^m
    \subset
        \mathbb{B}^m_{2r/\sqrt{\tilde{c}}}(\xi,t)
        \cap
        \mathbb{H}^m
    \subset 
        [-2,2]^{m-1}\times[0,\epsilon/2].
    \end{equation}
    Thus by Claim \ref{lem:geodesiceInParametrization}, we conclude that $B_r(y) = B_r(\Phi(\eta, \tau)) \subset \Phi(\mathbb{B}^m_{r/\sqrt{\tilde{c}}}(\eta, \tau))$.

    By \eqref{eq:PsiPhi} and \eqref{eq:defPhi}, we easily see that 
    \begin{equation}
    \label{eq:equalityOfHeight}
        s = \tau.
    \end{equation}
     Moreover, by \eqref{eq:inverseOfPointPhi} and \eqref{eq:equalityOfHeight}, we also have that
    \begin{equation}
        \Psi_{(\eta,s)}(0,s)
    =
        E(\psi(\eta), s)
    =
        \Phi(\eta,s)
    =
        y.
    \end{equation}
    Choose $r_0$ such that $\frac{r_0}{\sqrt{\tilde{c}}} < \frac{\epsilon}{4}$. Since $t \leq \frac{\epsilon}{4}$ and $(\eta,s) \in \mathbb{B}^m_{r/\sqrt{\tilde{c}}}(\xi,t)$, this implies that $s \leq \frac{\epsilon}{2}$. Thus we can use Claim \ref{lem:DifferenceOfBallsLemma} to ensure that
    \begin{equation}
    \label{eq:ballLemmaEqForY}
        \mathcal{L}^m\left(
            \mathbb{B}^m_r(0,s)
        \triangle
            \left(\Psi_{(\eta,s)}\right)^{-1}\left(B_r(\Psi_{(\eta,s)}(0,s))\right)
        \right)
    \leq
        \kappa r^{m+1}.
    \end{equation}
   Then
    \begin{align}
       V(y,r)
    &=
        \int_{\Psi_{(\eta, z)}^{-1}(B_r(y))}
        \left|\det\left[ \Psi_{(\eta,s)}^*g\right]_w\right|^\frac{1}{2}
        \di\mathcal{L}^m(w)\\
    &=
        \int_{\Psi_{(\eta, s)}^{-1}(B_r(y))}
        1\cdot\di\mathcal{L}^m(w)
    +
        O(r^{m+1}) \qquad \quad  \quad \, \text{by \eqref{eq:G}}\\
    &=
        \mathcal{L}^m(\mathbb{B}^m_r(0,s)\cap\mathbb{H}^m)
    +
        O(r^{m+1})  \qquad \qquad \qquad  \text{by \eqref{eq:ballLemmaEqForY}}
    \end{align}
    that is, there exists $\tilde{C}>0$ such that
    \begin{equation}
    \left|
        \vol_g(B_r({y}))
    -
        \mathcal{L}^m(\mathbb{B}^m_r(0,z)\cap \mathbb{H}^m)
    \right|
    \leq    
    \tilde{C}r^{m+1}
    \end{equation}
    Thus using the local Ahlfors regularity of $(M,g)$ and Claim \ref{lem:DifferenceOfBallsLemma}, we obtain
    \begin{align}
        \left|
            \frac{1}{\vol_g(B_r(\overline{y}))}
        -
            \frac{1}
            {\mathcal{L}^m(\mathbb{B}^m_r(0,z)\cap \mathbb{H}^m)}
        \right|
    &=
        \left|
            \frac{\mathcal{L}^m(\mathbb{B}^m_r(0,z)\cap \mathbb{H}^m) - \vol_g(B_r(\overline{y}))}{\vol_g(B_r(\overline{y}))\mathcal{L}^m(\mathbb{B}^m_r(0,z)\cap \mathbb{H}^m)}
        \right|\\
    &\leq
        \frac{C \tilde{C} r^{m+1}}{ r^{m} \mathcal{L}^m(\mathbb{B}^m_r(0,z)\cap \mathbb{H}^m)}\\
    &
        \leq
            \frac{C \tilde{C}}{\mathcal{L}^m(\mathbb{B}^m_1(0)\cap \mathbb{H}^m) r^{m-1}}
    \end{align}
    concluding the proof.
\end{proof}

\paragraph{\bf Step 4.}

\begin{claim}
\label{lem:TaylorExpansionInNormalizedCoords}
    For all $f \in C^\infty(M)$, there exists $C>0$ such that for all $(\xi,t, \zeta, s) \in D_{\delta}$ we have
    \begin{equation}
        \left|
            f\circ \Psi_{(\xi,t)}(\zeta,s)
        -
            f\circ \Psi_{(\xi,t)}(0,t)
        \right|
    \leq
        C\|
            (\zeta,s)
        -
            (0,t)
        \|_2,
    \end{equation}  
    \begin{equation}
        \left|
            f\circ \Psi_{(\xi,t)}(\zeta,s)
        -
            f\circ \Psi_{(\xi,t)}(0,t)
        -
            \nabla(f\circ \Psi_{(\xi,t)})_{(0,t)}
            \cdot
            \left(
                (\zeta, s)-(0,t)
            \right)
        \right|
    \leq
        C\|
            (\zeta, s)
        -
            (0,t)
        \|_2^2
    \end{equation}
    Also if $\partial_\nu f|_{\partial M} = 0$, then there exists $C>0$ such that
    \begin{equation}
    \label{eq:zeroNormalDerivativeBound}
        \left|
            \partial_{m}
            f\circ\Psi_{(\xi,t)}
            (0,t)
        \right|
    \leq
        Ct
    \end{equation}  
\end{claim}
\begin{proof}
    The map $\tilde{f}((\xi,t),(\zeta, s)) = f\circ \Psi_{(\xi,t)}(\zeta, s)$ is $C^\infty$, thus by a Taylor expansion of order $1$ and $2$ respectively, and compactness of $D_{\delta}$, we conclude the first two inequalities. For the last, we notice that
    \begin{equation}
        \partial_{m}
            f\circ\Psi_{(\xi,0)}
            (0,0)
        =
        (\partial_\nu f)(\psi(\xi,0))
        =
        0.
    \end{equation}
    Thus by a Taylor expansion and compactness of $D_{\delta}$ we conclude \eqref{eq:zeroNormalDerivativeBound}.
\end{proof}

 Now we will fix a function $f \in C^\infty(M)$ such that $\partial_\nu f|_{\partial M} = 0$, and show that for $x \in M$ such that $d(x,\partial M) < \frac{\epsilon}{4}$ then $\Delta_r f(x)$ is uniformly bounded. The proof for points $x$ with $d(x,\partial M) \geq\frac{\epsilon}{4}$, follows from the uniform convergence obtained in Proposition \ref{lem:LaplaceBeltramiLemma}. We will study the following term of the AMV:
\begin{equation}
    G_r(x)
:=
    \frac{1}{r^2}
    \int_{B_r(x)}
    \frac{1}{\vol_g(B_r(y))}
    \left(
        f(y)
        -
        f(x)
    \right)
    \di \vol_g(y)
\end{equation}
since the bound for the remainder follows similarly.

We notice that by equation \eqref{eq:coverOfBoundary} we have that there exists some $i \in \{1,...,l\}$ and $(\xi,t) \in [-1,1]^{m-1}\times[0,\frac{\epsilon}{4}]$ such that $x = \Phi_i(\xi,t)$. We let $\Phi = \Phi_i$. Also by \eqref{eq:inclusion_of_balls}, for $0<r<r_0$ in the conditions of the Claim, we have that $B_r(x) = B_r(\Psi_{(\xi,t)}(0,t)) \subset \Psi_{(\xi,t)}(\mathbb{B}^{m-1}_\delta(0)\times[0,\epsilon])$. Also we can choose $r_0$ small enough so that for all $(\xi,t) \in [-1,1]^{m-1}\times[0,{\epsilon}/{4}]$ we have $\mathbb{B}_{2r_0/\sqrt{\tilde{c}}}(x,t) \cap \mathbb{H}^m \subset [-2,2]^{m-1}\times [0,\epsilon/2]$. With this we can apply Claim \ref{lem:geodesiceInParametrization} to conclude that $B_r(\Psi_{(\xi,t)(0,t)}) \subset \Phi(\mathbb{B}^m_{r/\sqrt{\tilde{c}}}(\xi,t))$ and we can also apply Claim \ref{lem:quotientDIfferenceOfBalls} for points $\Psi_{(\xi,t)}(\zeta,s) \in B_r(\Psi_{(\xi,t)}(0,t))$.

Thus we can change variables of the integral to obtain
\begin{align}
    &G_r(x)
=
    \frac{1}{r^2}
    \int_{\Psi_{(\xi,t)}^{-1}(B_r(\Psi_{(\xi,t)}(0,t)))}
    \frac{1}{\vol_g(B_r(\Psi_{(\xi,t)}(\zeta,s) ))}\\
    &\times\left(
        f(\Psi_{(\xi,t)}(\zeta,s))
        -
        f(\Psi_{(\xi,t)}(0,t))
    \right)
    \det\left(\left[\Psi_{(\xi,t)}^*g)\right]_{\zeta,s}\right)^\frac{1}{2}
    \di\mathcal{L}^m(\zeta,s)
\end{align}

\begin{align}
    \begin{split}
G_r(x)
    &= 
    \frac{1}{r^2}
    \int_{\Psi_{(\xi,t)}^{-1}(B_r(\Psi_{(\xi,t)}(0,t)))}
        \frac
        {
            f(\Psi_{(\xi,t)}(\zeta,s))
            -
            f(\Psi_{(\xi,t)}(0,t))
        }
        {
            \mathcal{L}^m(\mathbb{B}^m_r(0,s)\cap \mathbb{H}^m)
        }
    \times\\
    &\quad\times 
        \det\left(\left[\Psi_{(\xi,t)}^*g)\right]_{\zeta,s}\right)^\frac{1}{2}
        \ \di\mathcal{L}^m(\zeta,s) + O(1)
\end{split}
\tag{by \ref{lem:quotientDIfferenceOfBalls} and 
        \ref{lem:TaylorExpansionInNormalizedCoords}}
\\
\begin{split}
    &= 
    \frac{1}{r^2}
    \int_{\mathbb{B}^m_r(0,t) \cap \mathbb{H}^m}
        \frac
        {
            f(\Psi_{(\xi,t)}(\zeta,s))
            -
            f(\Psi_{(\xi,t)}(0,t))
        }
        {
            \mathcal{L}^m(\mathbb{B}^m_r(0,s)\cap \mathbb{H}^m)
        }
    \times\\
    &\quad\times 
        \det\left(\left[\Psi_{(\xi,t)}^*g)\right]_{\zeta,s}\right)^\frac{1}{2}
        \ \di\mathcal{L}^m(\zeta,s) + O(1)
\end{split}
\tag{by \ref{lem:DifferenceOfBallsLemma} and 
        \ref{lem:TaylorExpansionInNormalizedCoords}}
\\
\begin{split}
    &= 
    \frac{1}{r^2}
    \int_{\mathbb{B}^m_r(0,t) \cap \mathbb{H}^m}
        \frac
        {
            f(\Psi_{(\xi,t)}(\zeta,s))
            -
            f(\Psi_{(\xi,t)}(0,t))
        }
        {
            \mathcal{L}^m(\mathbb{B}^m_r(0,s)\cap \mathbb{H}^m)
        }
    \times\\
    &\quad\times 
        \ \di\mathcal{L}^m(\zeta,s) + O(1)
\end{split}
\tag{by \ref{lem:determinantLemma} and 
        \ref{lem:TaylorExpansionInNormalizedCoords}}
\\
\begin{split}
    &= 
    \frac{1}{r^2}
    \int_{\mathbb{B}^m_r(0,t) \cap \mathbb{H}^m}
        \frac
        {
            \sum_{i=1}^{m-1}
            \partial_j(f\circ\Psi_{(\xi,t)})(0,t)
            \zeta_i
            +
            \partial_m(f\circ \Psi_{(\xi,t)})(0,t)
            (s-t)
        }
        {
            \mathcal{L}^m(\mathbb{B}^m_r(0,s)\cap \mathbb{H}^m)
        }
    \times\\
    &\quad\times 
        \ \di\mathcal{L}^m(\zeta,s) + O(1)
\end{split}
\tag{by \ref{lem:TaylorExpansionInNormalizedCoords}}
\\
\begin{split}
    &= 
    \frac{1}{r^2}
    \int_{\mathbb{B}^m_r(0,t) \cap \mathbb{H}^m}
        \frac
        {
            \sum_{i=1}^{m-1}
            \partial_j(f\circ\Psi_{(\xi,t)})(0,t)
            \zeta_i
            +
            \partial_m(f\circ \Psi_{(\xi,t)})(0,t)
            (s-t)
        }
        {
            \mathcal{L}^m(\mathbb{B}^m_r(0,s)\cap \mathbb{H}^m)
        }
    \times\\
    &\quad\times 
        \ \di\mathcal{L}^{m-1}(\zeta)\di\mathcal{L}(s)) + O(1)
\end{split}
\tag{by \ref{lem:TaylorExpansionInNormalizedCoords}}
\\
\begin{split}
    &= 
    \frac{1}{r^2}
    \int_{\mathbb{B}^m_r(0,t) \cap \mathbb{H}^m}
        \frac
        {
            \partial_m(f\circ \Psi_{(\xi,t)})(0,t)
            (s-t)
        }
        {
            \mathcal{L}^m(\mathbb{B}^m_r(0,s)\cap \mathbb{H}^m)
        }
    \times\\
    &\quad\times 
        \ \di\mathcal{L}^{m-1}(\zeta)\di\mathcal{L}(s)) + O(1)
\end{split}
\tag{by symmetry}
\\
\end{align}

Now we separate further in two cases. First if $\dist(x,\partial M) > 2r$ then $t > 2r$ and so for all $(\zeta,s) \in \mathbb{B}^m_r(0,t) \cap \mathbb{H}^m$ we have $\mathcal{L}^m(\mathbb{B}^m_r(0,s) \cap \mathbb{H}^m) = \mathcal{L}^m(\mathbb{B}^m_r(0))$, and so we conclude that
\begin{align}
    &\frac{1}{r^2}
    \int_{\mathbb{B}^m_r(0,t) \cap \mathbb{H}^m}
        \frac
        {
            \partial_m(f\circ \Psi_{(\xi,t)})(0,t)
            (s-t)
        }
        {
            \mathcal{L}^m(\mathbb{B}^m_r(0,s)\cap \mathbb{H}^m)
        }
        \ \di\mathcal{L}^{m-1}(\zeta)\di\mathcal{L}(s)\\
    &=
        \frac{1}{r^2\mathcal{L}^m(\mathbb{B}^m_r(0))}
    \int_{\mathbb{B}^m_r(0,t) \cap \mathbb{H}^m}
            \partial_m(f\circ \Psi_{(\xi,t)})(0,t)
            (s-t)
        \ \di\mathcal{L}^{m-1}(\zeta)\di\mathcal{L}(s)\\
    &=
        0.
\end{align}
This shows that $G_r(x) = O(1)$ for $x$ such that $2r < d(x,\partial M)<\frac{\epsilon}{4}$ since there are only a finite number of parametrizations. On the other hand if $d(x,\partial M) \leq 2r$, then we have by Lemma \ref{lem:TaylorExpansionInNormalizedCoords} that for $t \leq 2r$ then $\left|\partial_m(f\circ \Psi_{(x,t)})(0,t)\right| \leq 2Cr$, and so 
\begin{equation}
    \frac{1}{r^2}
    \int_{\mathbb{B}^m_r(0,t) \cap \mathbb{H}^m}
        \frac
        {
            \partial_m(f\circ \Psi_{(x,t)})(0,t)
            (s-t)
        }
        {
            \mathcal{L}^m(\mathbb{B}^m_r(0,s)\cap \mathbb{H}^m)
        }
        \ \di\mathcal{L}^{m-1}(y)\di\mathcal{L}(s)
    =
    O(1),
\end{equation}
which shows that $G_r(x) = O(1)$ for $x$ such that $d(x, \partial M) < 2r$.
\end{proof}

\section{Spectral convergence}\label{sec:manifolds}

In this section, we prove Theorem \ref{th:3}. We consider a smooth, compact, connected manifold $M^m$ endowed with a smooth Riemannian metric $g$. We let $\dist_g$ and $\vol_g$ be the associated Riemannian distance and volume measure on $M$, respectively. If $\partial M = \emptyset$ (resp.~$\partial M \neq \emptyset$), we let $\{\mu_k\}_{k \in \mathbb{N}}$ be the sequence of Laplace (resp.~Neumann) eigenvalues of $(M,g)$.

\subsection{Existence of limit eigenfunctions} Recalling that $C^\infty_{\nu}(M)$ is defined in \eqref{eq:Cnu}, we define the Hilbert space
\[
H \df \overline{C^\infty_{\nu}(M)}^{\|\cdot\|_{W^{2,2}}}.
\]
We let $\Pi(M, \vol_g)$ be defined as in \eqref{eq:perp} and we consider the operator $T: \Pi(M, \vol_g) \rightarrow H^*$ which maps any $f \in \Pi(M, \vol_g)$ to
\begin{equation}
\label{OperatorL2ToFuncs}
    T(f)
\df
   \left(
   v \in H
   \mapsto
   -
    \int_M
    f \Delta_g v \di \vol_g
    \right).
\end{equation}

\begin{lemma}
\label{injectivityOfIntegralOfLaplacian}
    The operator $T$ is injective.
\end{lemma}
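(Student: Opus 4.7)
The plan is to exploit the surjectivity of the Laplace--Beltrami operator with Neumann boundary conditions (or without boundary conditions when $\partial M=\emptyset$) acting between spaces of mean-zero smooth functions. Concretely, suppose $f \in \Pi(M,\vol_g)$ satisfies $T(f)=0$, i.e.
\[
\int_M f\,\Delta_g v\,\di\vol_g = 0 \qquad \forall\, v \in H.
\]
Since $\cC_\nu^\infty(M)$ (resp.~$\cC^\infty(M)$ when $\partial M=\emptyset$) is dense in $H$ by definition, the above identity in particular holds for every smooth $v$ with $\partial_\nu v = 0$.

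Next, I would invoke the classical solvability of the Neumann problem: given any $h \in \cC^\infty(M)$ with $\int_M h \di \vol_g = 0$, there exists $v \in \cC^\infty(M)$, unique up to additive constants, such that $\Delta_g v = h$ and $\partial_\nu v|_{\partial M} = 0$ (in the boundaryless case, only the mean-zero constraint is needed, and $v \in \cC^\infty(M)$). This is standard elliptic regularity applied to the Neumann Laplacian on a compact manifold with smooth boundary; since the source is smooth, the solution is smooth up to the boundary. In particular, such $v$ belongs to $\cC_\nu^\infty(M) \subset H$.

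Applying the previous step to the equation $\Delta_g v = h$, I deduce that for every $h \in \cC^\infty(M)$ with $\int_M h \di \vol_g = 0$,
\[
\int_M f\, h \di\vol_g \;=\; \int_M f\, \Delta_g v \di \vol_g \;=\; 0.
\]
Since smooth mean-zero functions are dense in $\Pi(M,\vol_g)$, this extends by continuity to the assertion $\int_M f h \di \vol_g = 0$ for every $h \in \Pi(M,\vol_g)$. Taking $h=f$, which is legitimate because $f \in \Pi(M,\vol_g)$ by assumption, yields $\|f\|_{L^2}^2 = 0$, hence $f=0$.

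I do not anticipate a substantive obstacle here: the argument is a soft duality argument, and the only non-trivial input is the well-known surjectivity of the Neumann Laplacian onto smooth mean-zero functions, which could be quoted from any standard reference on elliptic PDE on manifolds with boundary (or from the spectral decomposition of the Neumann Laplacian, which is available because $(M,g)$ is compact). The only point to be mildly careful about is handling the two cases $\partial M = \emptyset$ and $\partial M \neq \emptyset$ uniformly; in the boundaryless case no boundary condition needs to be carried, and $H$ is simply the $W^{2,2}$-closure of $\cC^\infty(M)$, so the proof simplifies accordingly.
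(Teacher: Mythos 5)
Your proof is correct and rests on the same core idea as the paper: use solvability of the Neumann problem (surjectivity of $\Delta_g$ onto mean-zero functions) to exhibit a test function in $H$ that detects $f$. The only cosmetic difference is that the paper solves $-\Delta_g v = f$ directly for the given $L^2$ function $f$, invoking elliptic regularity to place the $W^{2,2}$ solution in $H$ and obtaining $T(f)(v)=\|f\|_2^2$ at once, whereas you solve for smooth mean-zero sources $h$ and then pass to $h=f$ by density of smooth mean-zero functions in $\Pi(M,\vol_g)$ --- both routes use the same input and are equally valid.
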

\begin{proof}
    For $f \in \Pi(M, \vol_g)$ such that $f \neq 0$, let $v \in H$ be the solution of
    \begin{equation}
        \begin{cases}
            -{\Delta}_g v = f & \text{in } M,\\
            \partial_\nu v = 0 & \text{in } \partial M.
        \end{cases}
    \end{equation}  
    The solution to this problem exists since $\int_M f = 0 = \int_{\partial M} \partial_\nu v$. In fact by regularity theory we can conclude that $v \in H$, and so we conclude that
    \begin{equation}
        T(f)(v)
    =
        -\int_M f{\Delta}_g v\di \vol_g
    =
        \int_M f^2\di \vol_g
    \neq 0.
    \end{equation}
    Thus $T(f) \neq 0$, concluding that $T$ is injective.
\end{proof}

Let us now prove the existence of $L^2$-weak limit eigenfunctions.

\begin{prop}
\label{weakConvergenceLemma}
     Let $(r_n)$ be a sequence of positive numbers such that $r_n \to 0$. For any $n$ let $(\lambda_{k,{r_n}})$ be the eigenvalues of the operator $\tilde{\Delta}_{r_n}$ and let $(f_{k,r_n})$ be corresponding eigenfunctions. Then for any $k$ there exists a Laplace (resp.~Neumann) eigenfunction $f$ of $(M,g)$ with associated eigenvalue $\mu$ such that, up to extracting subsequences, satisfy
    \begin{equation}\label{eq:conv1}
    f_{k,r_n} \stackrel{L^2}{\rightharpoonup} f,
    \end{equation}
    \begin{equation}\label{eq:conv2}
    \lambda_{k,r_n} \to C_m \,  \mu,
    \end{equation}
    \begin{equation}\label{eq:energy_bound}
    \sup_n E_{r_n}(f_{k,r_n}-f) < +\infty.
    \end{equation}
\end{prop}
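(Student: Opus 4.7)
The plan is to extract a weakly convergent subsequence of the eigenfunctions and pass to the limit in the eigenvalue equation using the self-adjointness of $\tilde{\Delta}_{r_n}$ and the previous $L^2$-convergence result of Theorem~\ref{th:2}. To begin, I would appeal to Theorem~\ref{th:1} and more precisely to the bound $\tilde{\lambda}_{k,r} \leq \tilde{C}$ established in \eqref{eq:boundOfEigenvalues}, which ensures that $\lambda_{k,r_n} = \tilde{\lambda}_{k,r_n}$ stays bounded. Thus, up to a first subsequence extraction, $\lambda_{k,r_n} \to C_m \mu$ for some $\mu \geq 0$. Since $\|f_{k,r_n}\|_2 = 1$, weak compactness of the unit ball of the (reflexive) space $L^2(M,\vol_g)$ yields, along a further subsequence, $f_{k,r_n} \rightharpoonup f$ weakly in $L^2(M,\vol_g)$ for some $f \in L^2(M,\vol_g)$.

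Next, I would identify $f$ as an eigenfunction with eigenvalue $\mu$. Fix $v \in \cC_\nu^\infty(M)$ (or $v \in \cC^\infty(M)$ in the boundaryless case). The self-adjointness of $\tilde{\Delta}_{r_n}$ on $L^2$ (Lemma~\ref{lem:sAMV}) and the eigenfunction relation give
\[
\lambda_{k,r_n}\langle f_{k,r_n}, v\rangle_{L^2} = \langle -\tilde{\Delta}_{r_n} f_{k,r_n}, v\rangle_{L^2} = \langle f_{k,r_n}, -\tilde{\Delta}_{r_n} v\rangle_{L^2}.
\]
By Theorem~\ref{th:2}, $-\tilde{\Delta}_{r_n} v \to -C_m \Delta_g v$ strongly in $L^2$; combined with the weak convergence $f_{k,r_n} \rightharpoonup f$, the rightmost inner product converges to $-C_m \int_M f\, \Delta_g v\, \di\vol_g$, while the leftmost converges to $C_m \mu \int_M f v\, \di\vol_g$. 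Hence $\mu \int_M f v\, \di\vol_g = -\int_M f\, \Delta_g v\, \di\vol_g$ for all admissible test functions~$v$. By standard elliptic regularity for the Laplace--Beltrami operator (with Neumann data when $\partial M \neq \emptyset$), this upgrades $f$ to $\cC^\infty(M)$ (respectively $\cC_\nu^\infty(M)$) and yields $-\Delta_g f = \mu f$. The case $f = 0$ is a degenerate one; if $f \neq 0$, then $\mu$ is a genuine Laplace (resp.~Neumann) eigenvalue and $f$ is an associated eigenfunction.

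The energy bound \eqref{eq:energy_bound} would follow by bilinear expansion:
\[
\tilde{E}_{r_n}(f_{k,r_n} - f) = \tilde{E}_{r_n}(f_{k,r_n}) - 2\tilde{E}_{r_n}(f_{k,r_n}, f) + \tilde{E}_{r_n}(f).
\]
The first summand equals $\langle -\tilde{\Delta}_{r_n} f_{k,r_n}, f_{k,r_n}\rangle = \lambda_{k,r_n}$, which is bounded by the previous step. The second equals $\lambda_{k,r_n} \langle f_{k,r_n}, f\rangle$, bounded by Cauchy--Schwarz and $\|f_{k,r_n}\|_2 = 1$. Since $f$ is smooth (from the regularity step), the third summand converges to $C_m \int_M |\di f|_g^2\, \di\vol_g$ by Proposition~\ref{convergenceOfInnerProds} applied with $\psi = f$, and is therefore bounded. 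The principal technical obstacle is really Theorem~\ref{th:2} itself, which supplies the strong $L^2$-convergence of $\tilde{\Delta}_{r_n} v$ needed to pass to the limit in the test-function identity; crucially, the Neumann restriction on $v$ in the boundary case is precisely what forces the limit $f$ to satisfy the Neumann condition. The finer identification $\mu = \mu_k$ is not claimed here and will be taken up in the proof of Theorem~\ref{th:3}, where the orthogonality and counting of eigenvalues enter.
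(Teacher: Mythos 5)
Your argument follows the same skeleton as the paper's: subsequence extraction for the eigenvalues (via \eqref{eq:boundOfEigenvalues}), weak $L^2$-compactness for the eigenfunctions, passage to the limit in the adjointness identity using Theorem~\ref{th:2}, identification of $f$, and then the energy bound. The energy bound via bilinear expansion with $\tilde{E}_{r_n}(f_{k,r_n},f) = \lambda_{k,r_n}\langle f_{k,r_n},f\rangle$ is a clean alternative to the paper's triangle inequality in the $\tilde E_{r_n}$-inner product, and both are fine.

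The step that is glossed over is the passage from the very weak identity
$\mu\int_M f v\,\di\vol_g = -\int_M f\,\Delta_g v\,\di\vol_g$ for all $v\in\cC_\nu^\infty(M)$
to ``$f\in\cC_\nu^\infty(M)$ and $-\Delta_g f = \mu f$.'' Invoking ``standard elliptic regularity'' is too quick here: the test is against $\Delta_g v$ (two derivatives on the test function, $f$ only in $L^2$), and the boundary condition on $f$ is not stated in the formulation but has to be extracted from the fact that the test space is $\cC_\nu^\infty$ rather than all of $\cC^\infty(M)$. The paper resolves exactly this: it first shows $\int_M f\,\di\vol_g = 0$ (using that each $f_{k,r_n}$ has zero mean because $\tilde\Delta_{r_n}f_{k,r_n}$ integrates to zero and $\lambda_{k,r_n}>0$ — a fact you never record), then constructs a genuine $W^{2,2}$ Neumann solution $v$ of $-\Delta_g v = (\lambda/C_m) f$ with $\int_M v = 0$ (solvable precisely because $\int f = 0$), and finally uses the injectivity of the auxiliary operator $T$ from Lemma~\ref{injectivityOfIntegralOfLaplacian} to conclude $v = f$. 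This is the functional-analytic content hidden behind your one-line appeal to elliptic regularity, and it is what actually forces the Neumann boundary condition on $f$. Your remark about the degenerate case $f=0$ is also left unresolved; the paper does not explicitly exclude it either, but since the statement asserts that $f$ is an eigenfunction, a complete proof should note that $f\neq 0$ is only settled later by the strong-convergence argument in Theorem~\ref{th:3}.
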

    
\begin{proof} By the proof of Theorem \ref{th:1}, in particular \eqref{eq:boundOfEigenvalues}, there exists $\lambda \geq 0$ and a subsequence such that $\lambda_k(-\tilde{\Delta}_{r_n}) \rightarrow \lambda$ up to  subsequence. Since $\|f_{k,r_n}\|_{L^2(M)} = 1$ for any $n$, there exists $f \in L^2(M,\vol_g)$ such that the weak convergence \eqref{eq:conv1} holds up to subsequence. Therefore, by Theorem \ref{th:2}, we get that for any $\psi \in C^\infty(M)$ (resp.~$C^\infty_{\partial_{\nu}}(M)$),
    \begin{align*}
    \int_{M} f \Delta_g \psi \di \vol_g
    &=
    \lim_n
    \frac{1}{C_m}
    \int_M
        f_{k,r_n}
        \tilde{\Delta}_{r_n}\psi \di \vol_g
    =
        \frac{1}{C_m}
        \lim_{n}
        \int_M
            \left( \tilde{\Delta}_{r_n}f_{k,r_n} \right)\psi \di \vol_g\\
    &=
        -\frac{1}{C_m}
        \lim_{n}
        \lambda_{k}(-\tilde{\Delta}_{r_n})
        \int_M
            f_{k,r_n}\psi
            \di \vol_g
    =
        -\frac{1}{C_m}
        \lambda
        \int_M
            f
            \psi
            \di \vol_g.
    \end{align*}
    Moreover, since for any $n$ it holds that $\lambda_{k,r_n} > 0$ and $$0=\int_M -\tilde{\Delta}_{r_n} f_{k,r_n} \di \vol_g = \int_M \tilde{\lambda}_{k,r_n}f_{k,r_n} \di \vol_g$$ we get that $\int_{M} f_{k,r_n} = 0$. Thus $\int_M f \di \vol_g= 0$ by weak convergence.

    Now let $v \in W^{2,2}(M)$ be the solution of
    \begin{equation}
    \begin{cases}
  -{\Delta}_g v = \frac{\lambda}{C_m}f  & \text{ in } M \\
    \partial_\nu v = 0 & \text{ in } \partial M
    \end{cases}
    \end{equation}
    satisfying $\int_M v = 0$. Then we have that for $\psi \in H \cap C^\infty(M)$ then
    \begin{equation}
        \int_M v{\Delta}_g \psi
        \di \vol_g
    =
        -\frac{1}{C_m}
        \lambda
        \int_M
            f
            \psi
        \di \vol_g
    =
        \int_{M} f{\Delta}_g \psi\di \vol_g.
    \end{equation}
    Since this is a dense subspace of $H$ and the functionals are continuous with respect to $W^{2,2}(M)$ in $\psi$, the equality holds for all $H$. Thus by Lemma \ref{injectivityOfIntegralOfLaplacian} we conclude that $v = f$, and so $f$ satisfies
    \begin{equation}
    \begin{cases}
  -{\Delta}_g f = \frac{\lambda}{C_m}f  & \text{ in } M \\
    \partial_\nu f = 0 & \text{ in } \partial M
    \end{cases}
    \end{equation}
    thus $f$ is a Neumann eigenfunction, and so it must be $C^\infty(M)$. Also since both $f_{k,r_n},f \in \Pi(M,\vol_g)$, we know by Proposition \ref{prop:kernelOfSymAMV} using triangle inequality of the inner product,
    \begin{equation}
        E_{r_n}(f_{k,r_n}-f)^\frac{1}{2}
    \leq
        E_{r_n}(f_{k,r_n})^\frac{1}{2}
    +
        E_{r_n}(f)^\frac{1}{2}
    .
    \end{equation}
    We know that $E_{r_n}(f_{k,r_n}) = \lambda_{k,r_n}\|f_{k,r_n}\|_{L^2(M)} = \lambda_{k,r_n}$ which is uniformly bounded. Also since $f \in C^\infty(M)$, by Lemma \ref{convergenceOfInnerProds}, we know that $E_{r_n}(f)$ is also uniformly bounded, concluding the proof.
\end{proof}

\subsection{Energy comparison} Let us now compare the energy of a map defined on $M$ with the energy of the image of the map through a local chart parametrizing a neighborhood of an open subset of $\partial M$. To this aim, up to scaling, we consider a map $\Phi : (-1,1)^{m-1} \times [0,1) \to M$ which is a bi-Lipschitz homeomorphism onto its image.
We set
\begin{equation}\label{eq:mathcalQ}
\mathcal{Q} \df (-1/2,1/2)^{m-1} \times [0,1/2).
\end{equation}

\begin{lemma}
\label{ineqBetweenEnergies}
    There exist constants $\tilde C = \tilde C(\Phi)>0$ and $\tilde{c} = \tilde c (\Phi)>0$ such that for any $f \in L^2(M)$, for any $r \in (0,1/2)$,
    \begin{equation}
        \tilde{E}_{\tilde{c} r,\mathfrak{Q}}(f\circ \Phi)
    \leq
        \tilde{C}E_{r,\mathfrak{M}}(f).
    \end{equation}
    where $\mathfrak{Q} \df (\mathcal{Q},d_\infty,\mathcal{L}^m)$ and $\mathfrak{M} \df (M,d,\mu)$.
\end{lemma}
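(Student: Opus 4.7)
The plan is a direct change-of-variables argument: transport the double integral defining $\tilde E_{\tilde c r,\mathfrak{Q}}(f\circ\Phi)$ from $\mathcal{Q}\times\mathcal{Q}$ to $\Phi(\mathcal{Q})\times\Phi(\mathcal{Q})\subset M\times M$ and absorb every scaling constant into $\tilde C$ using the bi-Lipschitz property of $\Phi$. Let $L\ge 1$ denote the bi-Lipschitz constant of $\Phi$ with respect to $\dist_\infty$ and $\dist_g$, and set $\tilde c \df 1/L$. For brevity, write $V_\infty(\xi,s) \df \mathcal{L}^m(Q_s(\xi)\cap\mathcal{Q})$.

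First I would establish the ball inclusion: if $\xi\in\mathcal{Q}$ and $\eta\in Q_{\tilde c r}(\xi)\cap\mathcal{Q}$, then the bi-Lipschitz bound forces $\dist_g(\Phi(\xi),\Phi(\eta))\le L\cdot\tilde c r=r$, so $\Phi(\eta)\in B_r(\Phi(\xi))$. Second, I would establish a two-sided volume comparison at radius $r$: on one hand, an elementary one-dimensional check shows $V_\infty(\xi,\tilde c r)\ge (\tilde c r)^m$ uniformly for $\xi\in\mathcal{Q}$ and $r\in(0,1/2)$, since in each of the $m$ coordinates the intersection of $[\xi_i-\tilde c r,\xi_i+\tilde c r]$ with the corresponding edge of $\mathcal{Q}$ survives with length $\ge\tilde c r$; on the other, the local Ahlfors regularity \eqref{eq:ahlforsM} of $M$ yields $V(\Phi(\xi),r)\le C_1 r^m$ uniformly. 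Together these produce
\[
\frac{1}{V_\infty(\xi,\tilde c r)} \;\le\; \frac{C_1\,\tilde c^{-m}}{V(\Phi(\xi),r)}.
\]

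Third, I would invoke the area formula for the bi-Lipschitz map $\Phi$: since $|\det d\Phi|\ge L^{-m}$ almost everywhere by Rademacher's theorem and $\vol_g$ coincides with the Hausdorff measure $\mathcal{H}^m$ on $(M,\dist_g)$, one has, for any Borel $h\ge 0$ on $\Phi(\mathcal{Q})$,
\[
\int_{\mathcal{Q}} h(\Phi(\xi))\,\di\mathcal{L}^m(\xi) \;\le\; L^m \int_{\Phi(\mathcal{Q})} h(x)\,\di\vol_g(x).
\]
With all ingredients in hand, the assembly is: in the integrand of $\tilde E_{\tilde c r,\mathfrak{Q}}(f\circ\Phi)$, dominate $1_{Q_{\tilde c r}(\xi)}(\eta)$ by $1_{B_r(\Phi(\xi))}(\Phi(\eta))$, dominate both $V_\infty^{-1}$ factors via the volume comparison, extract the factor $\tilde c^{-2}$ from $(\tilde c r)^{-2}$, and apply the area formula twice, once in $\xi$ and once in $\eta$. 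What remains is exactly $\tilde E_{r,\mathfrak{M}}(f)$ up to the multiplicative constant $\tilde C \df C_1\, L^{3m+2}$ (using $\tilde c = 1/L$).

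The argument is essentially routine once the bi-Lipschitz property is in place. The one place where the geometry of $\mathcal{Q}$ actually enters is the lower bound $V_\infty(\xi,\tilde c r)\ge (\tilde c r)^m$: because $\mathcal{Q}$ is a half-cube (open in the first $m-1$ coordinates, closed at $\{\xi_m=0\}$ which parametrizes part of $\partial M$), one must check this uniformly including at corner and boundary points. Since $\tilde c r<1/2$, however, one side of each coordinate interval always survives the truncation, so the bound holds with no extra input. Thus there is no serious obstacle, only careful bookkeeping of constants.
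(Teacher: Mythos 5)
Your argument is correct and follows essentially the same route as the paper: establish a ball inclusion $\Phi(Q_{\tilde c r}(\xi)\cap\mathcal{Q}) \subset B_r(\Phi(\xi))$ from the bi-Lipschitz constant, a two-sided volume comparison at matching scales, and then transport the double integral by a Jacobian bound. The only differences are cosmetic: you bound each kernel factor separately and track the constants $L^{3m+2}C_1$ explicitly (invoking \eqref{eq:ahlforsM} for the upper volume bound and a direct geometric check for the lower bound on the half-cube), whereas the paper packages the same comparisons into a single kernel inequality $\tilde a_{r,\mathfrak{M}}(\Phi(x),\Phi(y))\ge C^{-1}\tilde a_{cr,\mathfrak{Q}}(x,y)$ before changing variables.
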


\begin{proof}
We start by pointing out that there exist constants $c=c(\Phi)>0$ and $C=C(\Phi)>0$ such that for all $x \in \mathcal{Q}$ and $r \in (0,1/2)$,
\begin{equation*}
    \Phi(Q_{cr}(x))
\subset
    B_r(\Phi(x))
\subset
    \Phi(Q_{Cr}(x)),
\end{equation*}
\begin{equation*}
    V(\Phi(x),r)
\leq
    C\mathcal{L}^m(Q_{cr}(x)\cap \mathcal{Q})    
,
\end{equation*}
\begin{equation*}
    \det(g_x) \geq 0,
\end{equation*}
where $g_x$ is the metric in the coordinates given by $\Phi$. Then for any $x,y \in \mathcal{Q}$,
\begin{align*}
    \tilde{a}_{r,\mathfrak{M} }(\Phi(x),\Phi(y))
&=
    1_{B_r(\Phi(x))}(\Phi(y))
        \left(
            \frac{1}{V(\Phi(x),r)}
            +
            \frac{1}{V(\Phi(y),r)}
        \right)\\
&\geq
    1_{Q_{cr}(x)}(y)
    \left(
        \frac{1}{C\mathcal{L}^m(Q_{cr}(x)\cap \mathcal{Q})} +
        \frac{1}{C\mathcal{L}^m(Q_{cr}(y)\cap \mathcal{Q})}  
    \right)\\
&=
   \frac{\tilde{a}_{cr,\mathfrak{Q}}(x,y)}{C}\, \cdot
\end{align*}
Thus
\begin{align*}
    \tilde{E}_{r,\mathfrak{M}}(f)
&\geq
    \iint_{\Phi(\mathcal{Q})^2}
            \tilde{a}_{r,\mathfrak{M}}(p,q)
            \left(
                \frac{f(p)-f(q)}{r}
            \right)^2
            \di\vol_g(q)
        \di\vol_g(p)\\
& =
    \iint_{\mathcal{Q}^2}
            \tilde{a}_{r,\mathfrak{M}}(\Phi(x),\Phi(y))
            \left(
                \frac{f(\Phi(x))-f(\Phi(y))}{r}
            \right)^2 \\
            & \qquad \qquad  \qquad \qquad \qquad \qquad \times
            \sqrt{\det(g_{x})\det(g_y)}
            \di\mathcal{L}^m(y)
        \di\mathcal{L}^m(x)\\
&     \geq
        \iint_{\mathcal{Q}^2}
        \frac{c}{C}
            \tilde{a}_{cr,\mathfrak{Q}}(x,y)
            \left(
                \frac{f(\Phi(x))-f(\Phi(y))}{r}
            \right)^2
            \di\mathcal{L}^m(y)
        \di\mathcal{L}^m(x)\\
    &=
        \frac{c}{C}
        E_{cr,\mathfrak{Q}}(f\circ \Phi).
\end{align*}
Taking $\tilde{c} = c$ and $\tilde{C} = \frac{C}{c}$, we obtain the result.
\end{proof}

\subsection{Proof of Theorem \ref{th:3}}

We are now in a position to prove Theorem \ref{th:3}. Recall the context of this result : $(r_n) \subset (0,+\infty)$ is a sequence such that $r_n \to 0$,  $(M^m,g)$ is a compact, connected, smooth Riemannian manifold with $\partial M = \emptyset$ (resp.~$\partial M \neq \emptyset$ ), $k$ is a positive integer, $\mu_k$ is the $k$-th lowest Laplace (resp.~Neumann) eigenvalue of $\Delta_g$, and $f_{k,r_n}$ is an eigenfunction of $-\tilde{\Delta}_{r_n}$ associated with the $k$-th eigenvalue $\lambda_k(-\tilde{\Delta}_{r_n})$ of this operator.

\begin{proof}
    \textbf{Step 1.}
    First we show strong $L^2$-convergence of the sequence $(f_{k,r_n})$. We proceed by contradiction. By Proposition \ref{weakConvergenceLemma}, we can assume that there exist $\alpha>0$, $f \in L^2(M,\mu)$ which is a Neumann eigenfunction, and $(r_n) \subset (0,+\infty)$ such that
    $r_n \to 0$ and
    \begin{equation}
    f_{k,r_n} \stackrel{L^2}{\rightharpoonup} f,
\quad \quad
    \|f_{k,r_n}-f\|_{L^2(M)}^2 \geq \alpha.
\end{equation}
Since $M$ is a compact manifold with boundary, up to scaling there exist finitely many bi-Lipschitz homeomorphisms $\{ \Phi_j: (-1,1)^{m-1}\times [0,1)  \rightarrow M\}_{j \in \{1,\ldots\ell\}}$ such that
\begin{equation}
    \bigcup_{j}\Phi_j ( \mathcal{Q})
=
    M,
\end{equation}
where $\mathcal{Q}$ is as in \eqref{eq:mathcalQ}, and 
\begin{equation}
\bigcup_{j} \Phi_j\Big((-1/2,1/2)^{m-1}\times \{0\} \Big) = \partial M.
\end{equation}
Then there exists $j \in \{1,\ldots,\ell\}$ such that, up to a subsequence,
\begin{equation}
   \inf_n \int_{\Phi_j(\mathcal{Q})}
        |f_{k,r_n}-f|^2 \di \vol_g
    \geq
        \frac{\alpha}{\ell}
    >
    0.
\end{equation}
From this we conclude that there exists $\tilde{\alpha} > 0$ such that
\begin{equation}\label{eq:6.4}
   \inf_n \int_{\mathcal{Q}}
        |f_{k,r_n}-f|^2 \circ \Phi_j  \di \mathcal{L}^m
    \geq
        \tilde{\alpha}
    >
    0.
\end{equation}
Let us set $\Phi:=\Phi_j$. Then there exist $C,\tilde{C},\tilde{c}>0$ such that for any $n$,
\[
\begin{array}{rlr}
C & \ge \tilde{E}_{r_n}(f_{k,r_n}-f)  & \text{by Proposition \ref{weakConvergenceLemma}} \\
   & \ge \tilde{C}^{-1} \tilde{E}_{\tilde{c}r_n,\mathfrak{Q}}((f_{k,r_n}-f)\circ \Phi) & \text{by Lemma  \ref{ineqBetweenEnergies}.}
\end{array}
\]
By the weak convergence we also have
\begin{equation}\label{eq:hnto0}
    h_n  \df (f_{k,r_n}-f)\circ \Phi
\stackrel{L^2}{\rightharpoonup}
    0.
\end{equation}
Let us set $\overline{r}_n = \tilde{c}r_n$. For an integer $N$ to be chosen later, consider a decomposition of $\mathcal{Q}$ into $L_N$ disjoint subcubes $\{\tilde{Q}_{i}\}$ of size $1/N$. For any $x,y \in \mathcal{Q}$, we set
\begin{equation}
a_r(x,y) \df \chi_{Q_r(x)\cap \mathcal{Q}}(y)\left(\frac{1}{\mathcal{L}^m(Q_r(x)\cap \mathcal{Q})}  + \frac{1}{\mathcal{L}^m(Q_r(y)\cap \mathcal{Q})}\right),
\end{equation}
\begin{equation}
    a_{r,i}(x,y) \df \chi_{Q_r(x)\cap \mathcal{Q}_i}(y)\left(\frac{1}{\mathcal{L}^m(Q_r(x)\cap \mathcal{Q}_i)}  + \frac{1}{\mathcal{L}^m(Q_r(y)\cap \mathcal{Q}_i)}\right),
\end{equation}
and we point out that for $x,y \in \mathcal{Q}_i$
\begin{equation}
    a_r(x,y)
\geq
    \frac{1}{2^m}
    a_{r,i}(x,y).
\end{equation}
We also set for any $n$,
\begin{equation}\epsilon_{i,n,N} \df \int_{\mathcal{Q}_i}h_n\di\mathcal{L}^m, \qquad \delta_{n,N} \df \max_i |\epsilon_{i,n,N}|.
\end{equation}
We obtain that for any $n$,
\begin{align}
\tilde{E}_{\overline{r}_n,\mathfrak{Q}}(h_n)
&=
\int_{\mathcal{Q}}
\left(
    \int_{\mathcal{Q}}
        a_{\overline{r}_n}(x,y)
        \frac{\left(
            h_n(x)
        -
            h_n(y)
        \right)^2}
        {\overline{r}_n^2}
    \di \mathcal{L}^m(y)
\right)
\di \mathcal{L}^m(x)\\
&=
\sum_{i}
\int_{\mathcal{Q}_i}
\left(
    \int_{\mathcal{Q}}
        a_{\overline{r}_n}(x,y)
        \frac{\left(
            h_n(x)
        -
            h_n(y)
        \right)^2}
        {\overline{r}_n^2}
    \di \mathcal{L}^m(y)
\right)
\di \mathcal{L}^m(x)\\
&\geq
    \frac{1}{2^m}
    \sum_{i}
    \int_{\mathcal{Q}_i}
\left(
    \int_{\mathcal{Q}_i}
        a_{\overline{r}_n,i}(x,y)
        \frac{\left(
            h_n(x)
        -
            h_n(y)
        \right)^2}
        {\overline{r}_n^2}
    \di \mathcal{L}^m(y)
\right)
\di \mathcal{L}^m(x)\\
&=
    \frac{1}{2^m}
    \sum_i
    \tilde{E}_{\overline{r}_n, \mathcal{Q}_i}(h_n)\\
&=
    \frac{1}{2^m}
    \sum_i
    \tilde{E}_{\overline{r}_n, \mathcal{Q}_i}(h_n - \epsilon_{i,n,N})\\
&\geq
    \frac{1}{2^m}
    \sum_i
    \|h_n-\epsilon_{i,n,N}\|_{L^2(\mathcal{Q}_i)}^2\lambda_{1}(-\tilde{\Delta}_{\overline{r}_n, \mathcal{Q}_i})\\
&\geq
    \frac{\lambda_1(-\tilde{\Delta}_{\overline{r}_n, \mathfrak{Q}^m(1/N)})}
    {2^m}
    \sum_i
    \|h_n-\epsilon_{i,n,N}\|_{L^2(\mathcal{Q}_i)}^2\\
&=
    \frac{\lambda_1(-\tilde{\Delta}_{\overline{r}_n, \mathfrak{Q}^m(1/N)})}
    {2^m}
    \sum_i
    \left(
    \|h_n\|_{L^2(\mathcal{Q}_i)}^2
-
    2\epsilon_{i,n,N}
    \int_{\mathcal{Q}_i}
        h_n
    \di\mathcal{L}^m
+
    \mathcal{L}^m(\mathcal{Q}_i)\epsilon_{i,n,N}^2
    \right)\\
&\geq
    \frac{\lambda_1(-\tilde{\Delta}_{\overline{r}_n, \mathfrak{Q}^m(1/N)})}
    {2^m}
    \left(
        \|h_n\|^2_{L^2(\mathcal{Q})}
    -
        3L_n\delta_{n,N}
    \right)\\
    &\geq
    \frac{\lambda_1(-\tilde{\Delta}_{\overline{r}_n, \mathfrak{Q}^m(1/N)})}
    {2^m}
    \left(
       \tilde{\alpha}
    -
        3L_n\delta_{n,N}
    \right).
\end{align}
By Lemma \ref{lem:firstEigenvalueOfCubes}, we choose $N$ big enough to ensure that for any $n$,
\begin{equation}
    \lambda_1(-\tilde{\Delta}_{\overline{r}_n, \mathfrak{Q}(1/N)})
>
    C \tilde{C}\frac{2^{m+2}}{\tilde{\alpha}} \, \cdot
\end{equation}
By the weak convergence \eqref{eq:hnto0} we know that $\delta_{n,N} \rightarrow 0$, and so we can choose $n$ big enough to guarantee
\begin{equation}
    \delta_{n,N} < \frac{\tilde{\alpha}}{6L_N} \, \cdot
\end{equation}
With these choices we eventually get
\begin{equation}
    \tilde{E}_{r_n}(f_{k,r_n}-f)
>
    C,
\end{equation}
which is a contradiction.

\textbf{Step 2.} Now we show that $\tilde{\lambda}_{k,r_n} \rightarrow \mu_k$, where $\mu_k$ is the $k$-th Neumann eigenvalue. Let $r_n \rightarrow 0$. We know by Proposition \ref{weakConvergenceLemma} that there exist eigenfunctions $f_0,...,f_k$ with Neumann eigenvalue $\lambda_0,...,\lambda_k$ such that
        \begin{equation}
            f_{i,r_n}
            \stackrel{L^2}{\rightarrow}
            f_i, \quad\quad \forall i \in \{0,...,k\},
        \end{equation}
        \begin{equation}
            \tilde{\lambda}_{k,r_n}
        \rightarrow
            C_m \lambda_k,
        \end{equation}
        and
        \begin{equation}
        \label{eq:ineqBetweenEigenvalues}
            \lambda_i \leq \lambda_k
        \quad\quad \forall i \in \{0,...,k\}.
        \end{equation}
        Since $\langle f_{i,r_n}, f_{j,r_n}\rangle = \delta_{i,j}$, we also have by strong convergence that $\langle f_i, f_j\rangle = \delta_{i,j}$. Thus we have that
        \begin{equation}
            V_{k+1} \df \mathrm{Span}(f_0,\ldots,f_k) \in \mathcal{G}_{k+1}(L^2(M,\vol_g)),
        \end{equation}
        and so by equation \eqref{eq:ineqBetweenEigenvalues}, we conclude
        \begin{equation}
            C_m \mu_k
        \leq
            \max_{f \in V_{k+1}}
            \frac{\langle
                \nabla f, \nabla f
            \rangle}
            {
                \|f\|_{L^2}
            }
        =
            C_m\lambda_k
        =
            \lim_n \tilde{\lambda}_{k,r_n}.
        \end{equation}
        This shows that $\liminf_{r \rightarrow 0} \tilde{\lambda}_{k,r_n} \geq C_m\mu_k$

        To prove $\limsup_{r \rightarrow 0} \tilde{\lambda}_{k,r} \leq C_m \mu_k$, let $\{f_0,\ldots,f_k\}$ be an $\langle \cdot,\cdot \rangle_2$-orthonormal family of Laplace (resp.~Neumann) eigenfunctions associated with the eigenvalues $\{\mu_0,\ldots,\mu_k\}$ respectively satisfying $\mu_0 \leq ... \leq \mu_k$. By elliptic regularity, we know that these functions belong to $C^\infty(M)$. Then Proposition \ref{convergenceOfInnerProds} implies that given $\epsilon>0$, there exists $r_\epsilon>0$ such that for $r \in (0,r_\epsilon)$,
    \begin{equation}
    \left|
        \langle 
            -\tilde{\Delta}_r f_i,f_j 
        \rangle_2
    -
        \delta_{i,j}C_m\mu_j
    \right|
    <
    \epsilon
    \end{equation} 
    where $\delta_{i,j}$ is the usual Kronecker delta. Set $U \df \mathrm{Span}\left(f_0,...,f_k\right)$ and
    \begin{equation}
        v \df \sum_{i=1}^ka_i\psi_i
    \end{equation}
    for some $a=(a_1,\ldots,a_k) \in \mathbb{S}^{k-1}$.
    Then
    \begin{align}
    \left|
        \langle 
            -\tilde{\Delta}_r v,v 
        \rangle
    -
        \sum_{i=1}^k
            a_i^2C_m\mu_i
    \right|
    =
    \left|
        \sum_{i,j=1}^k
        a_ia_j
        \langle 
            -\tilde{\Delta}_r f_i,f_j 
        \rangle
    -
        \sum_{i=1}^k
            a_i^2C_m\mu_i
    \right|
    \leq
    k^2\epsilon.
    \end{align}
    Since $U$ is a $k+1$-dimensional subspace, we conclude that
    \begin{equation}
        \tilde{\lambda}_{k,r}
    \leq
        \max_{v\in U}
        \frac{\langle 
            -\tilde{\Delta}_r v,v 
        \rangle}{\|v\|_{2}^2}
    \leq
        \max_{a \in \mathbb{S}^{k}}
        \sum_{i=1}^{k} a_i^2\mu_i
        +
        k^2\epsilon
    \le
        \mu_k
        +
        k^2\epsilon.
    \end{equation}
    Take the limit superior as $r \to 0$ and then let $\epsilon \to 0$ to obtain $\limsup_{r \rightarrow 0} \tilde{\lambda}_{k,r} \leq C_m \mu_k$. Combined with Corollary \ref{cor:min}, the latter implies the existence of $r_k>0$ such that $\min \sigma_{\text{ess}}(-\tilde{\Delta}_r) \big) \ge \mu_k+1 \ge \tilde{\lambda}_{k,r}$ for any $r \in (0,r_k)$, so that $\tilde{\lambda}_{k,r}$ indeed coincides with $\lambda_k(-\tilde{\Delta}_r)$.
\end{proof}

\bibliographystyle{alpha}
\bibliography{biblio}

\end{document}